\newtheorem{theorem}{Theorem}
\newtheorem{definition}[theorem]{Definition}
\newtheorem{remark}[theorem]{Remark}
\newtheorem{lemma}[theorem]{Lemma}
\newtheorem{proposition}[theorem]{Proposition}
\newtheorem{corollary}[theorem]{Corollary}
\newcommand{\inp}{\smallskip\makebox[1.2cm][l]{\em Input}}
\newcommand{\out}{\makebox[1.2cm][l]{\em Output}}
\newcommand{\MWC}{\textsc{Maximum $G$-Coverage}}
\newcommand{\MS}{\textsc{group model projection}}
\newcommand{\cS}{\mathcal S}
\newcommand{\cG}{\mathcal G}
\newcommand{\fG}{\mathfrak G}
\def\d{\delta}
\def\e{\epsilon}
\def\Gm{\Gamma}
\def\A{\mathbf{A}}
\def\RR{\mathbb{R}}
\newcommand{\edges}{\mathcal E}
\newcommand{\obs}{\mathbf y}
\newcommand{\x}{\mathbf x}
\newcommand{\bu}{\mathbf u}
\newcommand{\bv}{\mathbf v}
\newcommand{\w}{\mathbf w}
\newcommand{\z}{\mathbf z}
\newcommand{\sol}{\widehat{\x}}
\newcommand{\noise}{\mathbf e}
\newcommand{\NN}{[N]}
\newcommand{\bigO}{\mathcal O}
\newcommand{\support}{\mathcal S}
\newcommand{\0}{\boldsymbol{0}}
\newcommand{\med}{\mathcal M}
\newcommand{\supp}{\text{supp}}
\newcommand{\NP}{\mbox{\emph{NP}}}
\begin{document}
\title{Discrete Optimization Methods for Group Model Selection in Compressed Sensing}

\author{Bubacarr Bah\thanks{AIMS South Africa, \& Stellenbosch University, 
              6 Melrose Road, Muizenberg, Cape Town 7945, South Africa} \and Jannis Kurtz\thanks{Chair for Mathematics of Information Processing, RWTH Aachen University, Pontdriesch 10, 52062 Aachen, Germany} \and Oliver Schaudt\thanks{Chair for Mathematics of Information Processing, RWTH Aachen University, Pontdriesch 10, 52062 Aachen, Germany}}

\date{} 

\providecommand{\keywords}[1]{\textit{#1}}


\maketitle

\begin{abstract}
In this article we study the problem of signal recovery for group models. More precisely for a given set of groups, each containing a small subset of indices, and for given linear sketches of the true signal vector which is known to be group-sparse in the sense that its support is contained in the union of a small number of these groups, we study algorithms which successfully recover the true signal just by the knowledge of its linear sketches. We derive model projection complexity results and algorithms for more general group models than the state-of-the-art. We consider two versions of the classical Iterative Hard Thresholding algorithm (IHT). The classical version iteratively calculates the exact projection of a vector onto the group model, while the approximate version (AM-IHT) uses a head- and a tail-approximation iteratively. We apply both variants to group models and analyse the two cases where the sensing matrix is a Gaussian matrix and a model expander matrix.

To solve the exact projection problem on the group model, which is known to be equivalent to the maximum weight coverage problem, we use discrete optimization methods based on dynamic programming and Benders' Decomposition. The head- and tail-approximations are derived by a classical greedy-method and LP-rounding, respectively.

\keywords{Compressed Sensing, Group Models, Iterative Hard Thresholding, Maximum Weight Coverage}
\end{abstract}

\section{Introduction}

In many applications involving sensors or sensing systems an unknown sparse signal has to be recovered from a relatively small number of measurements. The reconstruction problem in standard compressed sensing attempts to recover an unknown $k$-sparse signal $\x \in \RR^N$, i.e. it has at most $k$ non-zero entries, from its (potentially noisy) linear measurements $\obs = \A\x + \noise$. Here, $\A\in\RR^{m\times N}$ for $m\ll N$, $\obs \in \RR^m$ and $\noise \in \RR^m$ is a noise vector, typically with a bounded noise level $\|\noise\|_2 \leq \eta$; see \cite{donoho2006compressed,candes2005decoding,candes2006robust}. A well-known result is that, if $\A$ is a random Gaussian matrix, the number of measurements required for most of the classical algorithms like $\ell_1$-minimization or Iterative Hard Thresholding (IHT) to successfully recover the true signal is $m=\bigO\left(k\log\left(N/k\right)\right)$ \cite{candes2006stable,blumensath2009iterative}. 

In model-based compressed sensing we exploit second-order structures beyond the first order sparsity and compressibility structures of a signal to more efficiently encode and more accurately decode the signal. Efficient encoding means taking a fewer number of measurements than in the standard compressed sensing setting, while accurate decoding does not only include smaller recovery error but better interpretability of the recovered solution than in the standard compressed sensing setting. The second order structures of the signal are usually referred to as the structured sparsity of the signal. The idea is, besides standard sparsity, to take into account more complicated structures of the signal \cite{baraniuk2010model}. Nevertheless most of the classical results and algorithms for standard compressed sensing can be adapted to the model-based framework \cite{baraniuk2010model}. 

Numerous applications of model-based compressed sensing exist in practice. 
Key amongst these applications is the multiple measurement vector (MMV) problem, which can be modelled as a block-sparse recovery problem, \cite{eldar2009robust}. The tree-sparse model has been well-exploited in a number of wavelet-based signal processing applications \cite{baraniuk2010model}. In the sparse matrix setting (see Section \ref{sec:prelim}) the model-based compressed sensing was used to solve the Earth Mover Distance problem (EMD). The EMD problem introduced in \cite{schmidt2013constrained} is motivated by the task of reconstructing time sequences of spatially sparse signals, e.g. seismic measurements; see also \cite{hegde2014approximation}. In addition, there are many more potential applications in linear sketching including data streaming \cite{muthukrishnan2005data}, graph sketching \cite{ahn2012graph,gilbert2004compressing}, breaking privacy of databases via
aggregate queries \cite{dwork2007price}, and in sparse regression codes or sparse superposition codes (SPARC) decoding \cite{joseph2014fast,takeishi2014least}, which is also an MMV problem.

Structured sparsity models include tree-sparse, block-sparse, and group-sparse models. For instance, for block-sparse models with dense Gaussian sensing matrices it has been established in \cite{baraniuk2010model} that the number of required measurements to ensure recovery is $m=\bigO(k)$ as opposed to $m=\bigO\left(k\log\left(N/k\right)\right)$ in the standard compressed sensing setting. Furthermore, in the sparse matrix setting, precisely for adjacency matrices of model expander graphs (also known as model expander matrices), the tree-sparse model only requires $m = \bigO\left(\log_k\left(N/k\right)\right)$ measurements \cite{indyk2013model,bah2014model}, which is smaller than the standard compressed sensing sampling rate stated above. Moreover, all proposed algorithms that perform an exact model projection, which is to find the closest vector in the model space for a given signal, guarantee recovery of a solution belonging to the model space, which is then more interpretable than applying off-the-shelf standard compressed sensing algorithms \cite{bah2014model}.

As the exact model projection problem used in many of the classical algorithms may become theoretically and computationally hard for specific sparsity models, approximation variants of some well-known algorithms like the Model-IHT have been introduced in \cite{hegde2015approximation}. Instead of iteratively solving the exact model projection problem, this algorithm, called AM-IHT, uses a head- and a tail-approximation to recover the signal which is computationally less demanding in general. The latter computational benefit comes along with a larger number of measurements required to obtain successful recovery with weaker recovery guarantees: the typical speed versus accuracy trade-off.

A special class of structured sparsity models are group models, where the support of the signal is known to be contained in the union of a small number of groups of indices. Group models were already studied extensively in the literature in the compressed sensing context; see \cite{baraniuk2010low,jenatton2011structured,obozinski2011group}. Its 
choice is motivated by several applications, e.g. in image processing; see \cite{kyrillidis2016convex,rao2012signal,rao2011convex}. As it was shown in \cite{baldassarre2016group} the exact projection problem for group models is \NP-hard in general but can be solved in polynomial time by dynamic programming if the intersection graph of the groups has no cycles. The latter case is quite restricting, since as a consequence each element is contained in at most two groups. In this work we extend existing results for the Model-IHT algorithm and its approximation variant (AM-IHT) derived in \cite{hegde2015approximation} to group models and model expander matrices. We focus on deriving discrete optimization methods to solve the exact projection problem and the head- and tail-approximations for much more general classes of group models than the state-of-the-art. 

In Section \ref{sec:prelim} we present the main preliminary results regarding compressed sensing for structured sparsity models and group models. In Section \ref{sec:exactprojections} we study recovery algorithms using exact projection oracles. We first show that for group models with low treewidth, the projection problem can be solved in polynomial time by dynamic programming which is a generalization of the result in \cite{baldassarre2016group}. 
We then adapt known theoretical results for model expander matrices to these more general group models. 
To solve the exact projection problem for general group models we apply a Benders' decomposition procedure. It can be even used for the more general assumption that we seek a signal which is group-sparse and additionally sparse in the classical sense. In Section \ref{sec:low-freq} we study recovery algorithms using approximation projection oracles, namely head- and tail-approximations. We apply the known results in \cite{hegde2015approximation} to group models of low frequency and show that the required head- and tail-approximations for group models can be solved by a classical greedy-method and LP rounding, respectively. In Section \ref{sec:computations} we test all algorithms, including Model-IHT, AM-IHT, MEIHT and AM-EIHT, on overlapping block-groups and compare the number of required measurements, iterations and the run-time.

\vspace{0.2cm}
{\em Summary of our contributions:}
\vspace{-0.2cm}
\begin{itemize}
\item We study the Model-Expander IHT (MEIHT) algorithm, which was analysed in \cite{bah2014model} for tree-sparse and loopless group-sparse signals, and extend the existing results to general group models, proving convergence of the algorithm.
\item We extend the results in \cite{baldassarre2016group} by proving that the projection problem can be solved in polynomial time if the incidence graph of the underlying group model has bounded treewidth. This includes the case when the intersection graph has bounded treewidth, which generalizes the result for acyclic graphs derived in \cite{baldassarre2016group}. We complement the latter result with a hardness result that we use to justify the bounded treewidth approach.
\item We derive a Benders' decomposition procedure to solve the projection problem for arbitrary group models, assuming no restriction on the frequency or the structure of the groups. The latter procedure even works for the more general model combining group-sparsity with classical sparsity. We integrate the latter procedure into the Model-IHT and MEIHT algorithm.
\item We apply the Approximate-Model IHT (AM-IHT) derived in \cite{hegde2014approximation,hegde2015fast} to Gaussian and expander matrices and to the case of group models with bounded frequency, which is the maximal number of groups an element is contained in. In the expander case we call the algorithm AM-EIHT. To this end we derive both, head- and tail-approximations of arbitrary precision using a classical greedy method and LP-rounding. Using the AM-IHT and the results in \cite{hegde2014approximation,hegde2015fast}, this implies compressive sensing $\ell_2/\ell_2$ recovery guarantees for group-sparse signals.
We show that the number of measurements needed to guarantee a successful recovery exceeds the number needed by the usual model-based compressed sensing bound \cite{baraniuk2010model,blumensath2007sampling} only by a constant factor.
\item We test the algorithms Model-IHT, MEIHT, AM-IHT and AM-EIHT on groups given by overlapping blocks for random signals and measurement matrices. We analyse and compare the minimal number of measurements needed for recovery, the run-time and the number of iterations of the algorithm. 
\end{itemize}

\section{Preliminaries}\label{sec:prelim}

\subsection{Notation}
In most of this work scalars are denoted by ordinary letters (e.g. $x$, $N$), vectors and matrices by boldface letters (e.g. ${\bf x}$, ${\bf A}$), and sets by calligraphic capital letters (e.g., $\mathcal{S}$).
The cardinality of a set $\mathcal{S}$ is denoted by $|\mathcal{S}|$ and we define $[N] := \{1, \ldots, N\}$.
Given $\mathcal{S} \subseteq [N]$, its complement is denoted by $\mathcal{S}^c := [N] \setminus \mathcal{S}$ and $\x_\mathcal{S}$ is the restriction of $\x \in \RR^N$ to $\mathcal{S}$, i.e.~ 
\[(\x_\mathcal{S})_i = \begin{cases} x_i, ~ \mbox{if} ~i \in \mathcal{S},\\ 0 ~ \mbox{otherwise}.\end{cases}
\]
The support of a vector $\x\in\RR^N$ is defined by $\supp (\x)=\left\{ i\in[N] \ | \ x_i\neq 0\right\}$. For a given $k\in\mathbb N$ we say a vector $\x\in\RR^N$ is $k$-sparse if $|\supp (\x)|\le k$.
For a matrix $\A$, the matrix $\A_\support$ denotes a sub-matrix of $\A$ with columns indexed by $\support$.
For a graph $G=(V,E)$ and $\support\subseteq V$, $\Gamma(\support)$ denotes the set of {\em neighbours} of $\support$, that is the set of nodes that are connected by an edge to the nodes in $\support$. We denote by $e_{ij} = (i,j)$ an edge connecting node $i$ to node $j$.
The set $\cG_i$ denotes a group of size $g_i$ and a group model is any subset of $\mathfrak{G} = \{\cG_1,\ldots,\cG_M\}$; while a group model of order $G\in\NN$ is denoted by $\fG_G$, which is a collection of any $G$ groups of $\fG$. For a subset of groups $\mathcal S\subset\mathfrak G$ we sometimes write
\[
\bigcup\mathcal S:=\bigcup_{S\in \mathcal S}S.
\]
 The $\ell_p$ norm of a vector ${\bf x} \in \RR^N$ is defined as 
\[\|{\bf x}\|_p := \left ( \sum_{i=1}^N x_i^p \right )^{1/p}.\]

\subsection{Compressed Sensing}\label{sec:PrelCS}
Recall that the reconstruction problem in standard compressed sensing \cite{donoho2006compressed,candes2005decoding,candes2006robust} attempts to recover an unknown $k$-sparse signal $\x \in \RR^N$, from its (potentially noisy) linear measurements $\obs = \A\x + \noise$, where $\A\in\RR^{m\times N}$, $\obs \in \RR^m$ for $m\ll N$ and $\noise \in \RR^m$ is a noise vector, typically with a bounded noise level $\|\noise\|_2 \leq \eta$. The reconstruction problem can be formulated as the optimization problem
\begin{equation}
\label{eqn:l0prob}
\min_{\x\in\RR^N} ~\|\x\|_0 \quad \mbox{subject to} \quad \|\A\x - \obs\|_2 \leq \eta,
\end{equation}
where $\|\x\|_0$ is the number of non-zero components of $\x$. Problem \eqref{eqn:l0prob} is usually relaxed to an $\ell_1$-minimization problem by replacing $\|\cdot \|_0$ with the $\ell_1$-norm. It has been established that the solution minimizing the $\ell_1$-norm coincides with the optimal solution of \eqref{eqn:l0prob} under certain conditions \cite{candes2006robust}. Besides the latter approach the compressed sensing problem can be solved by a class of greedy algorithms, including the IHT \cite{blumensath2009iterative}. A detailed discussion on compressed sensing algorithms can be found in \cite{foucart2013mathematical}.

The idea behind the IHT can be explained by considering the problem
\begin{equation}
\label{eqn:ihtprob}
\min_{\x\in\RR^N} ~\|\A\x - \obs\|^2_2 \quad \mbox{subject to} \quad  \|\x\|_0 \leq k.
\end{equation}
Under certain choices of $\eta$ and $k$ the latter problem is equivalent to \eqref{eqn:l0prob} \cite{blumensath2009iterative}. Based on the idea of gradient descent methods, \eqref{eqn:ihtprob} can be solved by iteratively taking a gradient descent step, followed by a hard thresholding operation, which sets all components to zero except the largest $k$ in magnitude. Starting with an initial guess $\x^{(0)} = \0$, the $(n+1)$-th IHT update is given by
\begin{equation}
\label{eqn:ihtalgo}
\x^{(n+1)} = \mathcal{H}_k\left[\x^{(n)} + \A^*\left(\A\x^{(n)} - \obs\right)\right],
\end{equation}
where $\mathcal{H}_k:\RR^N\to\RR^N$ is the hard threshold operator and $\A^*$ is the adjoint matrix of $\A$.

Recovery guarantees of algorithms are typically given in terms of what is referred to as the $\ell_p/\ell_q$ {\em instance optimality} \cite{candes2006robust}. Precisely, an algorithm has $\ell_p/\ell_q$ instance optimality if for a given signal $\x$ it always returns a signal $\sol$ with the following error bound
\begin{equation}
\label{eqn:insopt}
\|\x - \sol\|_p \leq c_1(k,p,q) \sigma_k(\x)_q + c_2(k,p,q) \eta,
\end{equation}
where $1\leq q \leq p \leq 2$, $c_1(k,p,q), c_2(k,p,q)$ are constants independent of the dimension of the signal and 
\[\displaystyle \sigma_k(\x)_q = \min_{\z:\| \z\|_0\le k}\|\x-\z\|_q \]
is the {\em best $k$-term approximation} of a signal (in the $\ell_q$-norm). 

Ideally, we would like to have $\ell_2/\ell_1$ instance optimality \cite{candes2006robust}. It turned out that the instance optimality of the known algorithms depends mainly on the sensing matrix $\A$. Key amongst the tools used to analyse the suitability of $\A$ is the restricted isometry property, which is defined in the following.
\begin{definition}[RIP]
	A matrix $\A\in\RR^{m\times N}$ satisfies the $\ell_p$-norm restricted isometry property (RIP-p) of order $k$, with restricted isometry constant (RIC) $\d_k < 1$, if for all $k$-sparse vectors $\x$
	\begin{equation}
	\label{eqn:rip}
	\left(1-\d_k\right)\|\x\|_p^p \leq \|\A\x\|_p^p \leq \left(1+\d_k\right)\|\x\|_p^p.
	\end{equation}
\end{definition}
Typically RIP without the subscript $p$ refers to case when $p=2$. We use this general definition here because we will study the case $p=1$ later. The RIP is a sufficient condition on $\A$ that guarantees optimal recovery of $\x$ for most of the known algorithms. If the entries of $\A$ are drawn i.i.d from a sub-Gaussian distribution and $m = \bigO\left(k\log(N/k)\right)$, then $\A$ has RIP-2 with high probability and leads to the ideal $\ell_2/\ell_1$ instance optimality for most algorithms; see \cite{candes2006stable}. Note that the bound $\bigO\left(k\log(N/k)\right)$ is asymptotically tight. On the other hand, deterministic constructions of $\A$ or random $\A$ with binary entries with non-zero mean do not achieve this optimal $m$, and are faced with the so-called {\em square root bottleneck} where $m=\Omega\left(k^2\right)$; see  \cite{devore2007deterministic,chandar2008negative}.

\subsubsection*{Sparse Sensing Matrices from Expander Graphs.} ~The computational benefits of sparse sensing matrices necessitated finding a way to circumvent the square root bottleneck for non-zero mean binary matrices. One such class of binary matrices is the class of adjacency matrices of expander graphs (henceforth referred to as {\em expander matrices}), which satisfy the weaker RIP-1. Expander graphs are objects of interest in pure mathematics and theoretical computer science, for a detailed discourse on this subject see \cite{hoory2006expander}. We define an expander graph as follows: 
\begin{definition}[Expander graph]
	\label{def:llexpander}
	Let $H=\left( [N],[m],\edges \right)$ be a left-regular bipartite graph with $N$ left vertices, $m$ right vertices, a set of edges $\edges$ and left degree $d$.
	If for any $\epsilon \in (0,1/2)$ and any $\support \subset [N]$ of size $|\support|\leq k$ it holds $|\Gamma(\support)| \geq (1-\epsilon)dk$, then $H$ is referred to as a {\em $(k,d,\epsilon)$-expander graph}.
\end{definition}
An expander matrix is the adjacency matrix of an expander graph. Choosing $m = \bigO\left(k\log(N/k)\right)$, then a random bipartite graph $H=\left( [N],[m],\edges \right)$ with left degree $d=\bigO\left( \frac{1}{\epsilon}\log(\frac{N}{k})\right)$ is an $(k,d,\epsilon )$-expander graph with high probability \cite{foucart2013mathematical}. Furthermore expander matrices achieve the sub-optimal $\ell_1/\ell_1$ instance optimality \cite{berinde2008combining}. For completeness we state the lemma in \cite{jafarpour2009efficient} deriving the RIC for such matrices. 
\begin{lemma}[RIP-1 for Expander Matrices, \cite{jafarpour2009efficient}]\label{lem:rip1}
	Let $\A$ be the adjacency matrix of a $(k,d,\epsilon)$-expander graph $H$, then for any $k$-sparse vector $\x$, we have
	\begin{equation}
	\label{eqn:rip1}
	\left(1-2\epsilon\right)d\|\x\|_1 \leq \|\A\x\|_1 \leq d\|\x\|_1.
	\end{equation}
\end{lemma}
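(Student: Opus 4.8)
The two bounds are quite different in character, so I would establish them separately, the upper bound being essentially immediate and the lower bound carrying all the work.

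For the \textbf{upper bound} I would use only left-regularity. Since $G$ has left degree $d$, every column of $\A$ has exactly $d$ ones, so that $(\A\x)_j = \sum_{i:(i,j)\in\edges} x_i$. Applying the triangle inequality to each coordinate and swapping the order of summation gives $\|\A\x\|_1 = \sum_{j}\bigl|\sum_{i:(i,j)\in\edges} x_i\bigr| \le \sum_{j}\sum_{i:(i,j)\in\edges}|x_i| = \sum_{i}d\,|x_i| = d\|\x\|_1$, where the last equality uses that each left vertex $i$ lies in exactly $d$ edges. No expansion is needed here.

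For the \textbf{lower bound} the obstacle is cancellation inside each coordinate $(\A\x)_j$, and the plan is to control it by a collision-counting argument driven by the expansion property. Let $\support=\supp(\x)$, so $|\support|\le k$, and relabel the support so that $|x_1|\ge|x_2|\ge\cdots\ge|x_s|$. Processing the coordinates in this order, let $r_i$ be the number of right vertices newly reached when coordinate $i$ is added to the prefix $\{1,\ldots,i-1\}$ (its \emph{primary} edges) and set $c_i=d-r_i$ (its \emph{collision} edges). A reverse triangle inequality at each right vertex, charging to every vertex the magnitude of the largest-magnitude coordinate hitting it and subtracting the collision contributions, telescopes after summation to $\|\A\x\|_1 \ge d\|\x\|_1 - 2\sum_{i} c_i\,|x_i|$. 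It therefore suffices to prove $\sum_i c_i|x_i|\le \epsilon d\|\x\|_1$.

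This last estimate is the crux, and it is exactly where both the expansion and the magnitude ordering enter. Because $\sum_{i\le t} r_i = |\Gamma(\{1,\ldots,t\})|$ and each prefix has size $t\le s\le k$, the $(k,d,\epsilon)$-property yields the prefix bound $\sum_{i\le t} c_i = dt - |\Gamma(\{1,\ldots,t\})| \le \epsilon d t$ for every $t$. I would then finish by summation by parts (Abel summation) against the nonincreasing sequence $|x_1|\ge\cdots\ge|x_s|$: writing $C_t=\sum_{i\le t}c_i\le \epsilon d t$ and $|x_{s+1}|=0$, one gets $\sum_i c_i|x_i| = \sum_t C_t\bigl(|x_t|-|x_{t+1}|\bigr) \le \epsilon d\sum_t t\bigl(|x_t|-|x_{t+1}|\bigr) = \epsilon d\|\x\|_1$. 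Combining with the displayed inequality gives $\|\A\x\|_1 \ge (1-2\epsilon)d\|\x\|_1$. The single subtle point I expect to be the main obstacle is that a priori the collisions could concentrate on the heavy coordinates; what defeats this is precisely applying the expansion guarantee to \emph{every} prefix rather than to $\support$ alone, so that the summation-by-parts step can convert the uniform prefix bound into the weighted bound.
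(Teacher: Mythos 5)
Your proof is correct. Note that the paper itself does not prove this lemma at all: it is stated purely as an imported result, credited to Jafarpour et al.\ \cite{jafarpour2009efficient}, so there is no in-paper argument to compare against. What you have written is the standard collision-counting proof from that literature (left-regularity alone for the upper bound; ordering the support by magnitude, splitting edges into primary and collision edges, applying the expansion guarantee to every prefix, and finishing with Abel summation against the nonincreasing magnitudes), and all the steps check out -- in particular the identity $\sum_j |x_{i_1(j)}| = \sum_i (d-c_i)|x_i|$ that produces the factor $2$, and the fact that each prefix has size at most $k$ so the expander property applies to it.
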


The most relevant algorithm that exploits the structure of the expander matrices is the Expander-IHT (EIHT) proposed in \cite{foucart2013mathematical}. Similar to the IHT algorithm it performs updates 
\begin{equation}
\label{eqn:eihtalgo}
\x^{(n+1)} = \mathcal{H}_k\left[\x^{(n)} + \med\left(\A\x^{(n)} - \obs\right)\right],
\end{equation}
where $\med: \RR^m \rightarrow \RR^N$ is the {\em median operator} and $[\med(\z)]_i = \mbox{median}\left(\{z_j\}_{j\in \Gm(i)}\right)$ for each $\z\in\RR^m$. For expander matrices the EIHT achieves $\ell_1/\ell_1$ instance optimality \cite{foucart2013mathematical}. 

\subsection{Model-based Compressed Sensing}\label{sec:PrelimModelBasedCS}
Besides sparsity (and compressibility) signals do exhibit more complicated structures. When compressed sensing takes into account these more complicated structures (or models) in addition to sparsity, it is usually referred to as model-based compressed sensing or {\em structured sparse recovery} \cite{baraniuk2010model}. A precise definition is given in the following:
\begin{definition}[Structured Sparsity Model \cite{baraniuk2010model}]
A \textit{structured sparsity model} is a collection of sets, $\mathfrak M=\left\{ \mathcal S_1 ,\ldots , \mathcal S_M\right\}$ with $|\mathfrak M| = M$, of allowed structured supports $\mathcal S_i\subseteq [N]$.
\end{definition}
Note that the classical $k$-sparsity studied in Section \ref{sec:PrelCS} is a special case of a structured sparsity model where all supports of size at most $k$ are allowed. Popular structured sparsity models include tree-sparse, block-sparse, and group-sparse models \cite{baraniuk2010model}. In this work we study group-sparse models which we will introduce in Section \ref{sec:Prel_GroupStructures}.

Similar to the classical sparsity case the RIP property is defined for structured sparsity models.
\begin{definition}[Model-RIP \cite{baraniuk2010model}]
A matrix $\A\in\RR^{m\times N}$ satisfies the $\ell_p$-norm model restricted isometry property ($\mathfrak{M}$-RIP-$p$) with model restricted isometry constant ($\mathfrak{M}$-RIC) $\d_{\mathfrak M} < 1$, if for all vectors $\x$ with $\supp (\x)\in \mathfrak M$ it holds
	\begin{equation}
	\label{eqn:model-rip}
	\left(1-\d_{\mathfrak M}\right)\|\x\|_p^p \leq \|\A\x\|_p^p \leq \left(1+\d_{\mathfrak M}\right)\|\x\|_p^p.
	\end{equation}
\end{definition}
In \cite{baraniuk2010model} it was shown that for a matrix $\A\in\RR^{m\times N}$ to have the Model-RIP with high probability the required number of measurements is $m = \bigO(k)$ for tree-sparse signals and $m = \bigO\left(kg + \log\left(N/(kg)\right)\right)$ for block-sparse signals with block size $g$, when the sensing matrices are dense (typically sub-Gaussian). In general for a given structured sparsity model $\mathfrak M$ for sub-Gaussian random matrices the number of required measurements is $m=\bigO\left(\d_{\mathfrak M}^{-2}g\log (\d_{\mathfrak M}^{-1}) + \log(|\mathfrak M |)\right)$ where $g$ is the cardinality of the largest support in $\mathfrak M$.
 
Furthermore the authors in \cite{baraniuk2010model} show that classical algorithms like the IHT can be modified for structured sparsity models to achieve instance optimality. To this end the hard thresholding operator $\mathcal{H}_k$ used in the classical IHT is replaced by a model-projection oracle which for a given signal $\x\in\RR^N$ returns the closest signal over all signals having support in $\mathfrak M$. We define the model-projection oracle in the following.
\begin{definition}[Model-Projection Oracle \cite{baraniuk2010model}]\label{def:projection_oracle}
Given $p\ge 1$, a \textit{model-projection oracle} is a function $\mathcal P_{\mathfrak M}:\RR^N \to \RR^N$ such that for all $\x\in\RR^N$ we have $\supp (\mathcal P_{\mathfrak M}(\x))\in \mathfrak M$ and it holds
\[
\| \x - \mathcal P_{\mathfrak M}(\x )\|_p = \min_{\support\in\mathfrak M} \| \x - \x_{\support}\|_p . 
\]
\end{definition}
From the definition it directly follows that $\mathcal P_{\mathfrak M}(\x )_i = \x_i$ if $i\in\supp (\mathcal P_{\mathfrak M}(\x))$ and $0$ otherwise. Note that in the case of classical $k$-sparsity the model-projection oracle is given by the hard thresholding operator $\mathcal{H}_k$. In contrast to this case, calculating the optimal model projection $P_{\mathfrak M} (\x)$ for a given signal $\x\in\RR^N$ and a given structured sparsity model $\mathfrak M$ may be computationally hard. Depending on the model $\mathfrak M$ finding the optimal model projection vector may be even NP-hard; see Section \ref{sec:Prel_GroupStructures}. The modified version of the IHT derived in \cite{baraniuk2010model} is presented in Algorithm \ref{alg:Model-IHT}.

\begin{algorithm}\caption{(Model-IHT)}\label{alg:Model-IHT}
	\inp $\A$, $\obs$, $\noise$ with $\obs = \A\x +\noise$ and $\mathfrak M$\\
	\out $\text{Model-IHT}(\obs,\A,\mathfrak{M})$, an $\mathfrak M$-sparse approximation to $\x$
	\begin{algorithmic}[1]
		\State $\x^{(0)} \gets \boldsymbol{0}$; \ $n\gets 0$
		\While {halting criterion false}  
		\State $\x^{(n+1)} \gets \mathcal{P}_{\mathfrak M}\left[\x^{(n)} + \A^*(\obs-\A\x^{(n)})\right]$
		\State $n\gets n+1$
		\EndWhile
		\State Return: $\text{Model-IHT}(\obs,\A,\mathfrak{M}) \gets \x^{(n)}$
	\end{algorithmic}
\end{algorithm}
\vspace{-0.3cm}
\noindent Note that common halting criterion is given by a maximum number of iterations or a bound on the iteration error $\|\x^{(n+1)}-\x^{(n)}\|_p$.

\subsubsection*{Model-Sparse Sensing Matrices from Expander Graphs.}
In the sparse matrix setting the sparse matrices we consider as {\em model expander matrices}, which are adjacency matrices of model expander graphs, defined thus.
	\begin{definition}[Model expander graph]
		\label{def:mdlexpander}
		Let $H=\left( [N],[m],\edges \right)$ be a left-regular bipartite graph with $N$ left vertices, $m$ right vertices, a set of edges $\edges$ and left degree $d$.
		Given a model $\mathfrak M$, if for any $\epsilon_{\mathfrak M} \in (0,1/2)$ and any $\support = \cup_{\support_i \in \mathcal K}\support_i$, with $\mathcal K \subset \mathfrak M$ and $|\support| \leq s$, we have $|\Gamma(\support)| \geq (1-\epsilon_{\mathfrak M})d|\support|$, then $H$ is referred to as a {\em $(s,d,\epsilon_{\mathfrak M})$-model expander graph}.
	\end{definition}

In this setting the known results are sub-optimal. Using {\em model expander matrices} for tree-sparse models the required number of measurements to obtain instance optimality is $m = k\log\left(N/k\right)/\log\log\left(N/k\right)$ which was shown in \cite{indyk2013model,bah2014model}.

A key ingredient in the analysis for the afore-mentioned sample complexity results for model expanders is the {\em model}-RIP-1, which is just RIP-1 for model expander matrices (hence they are also called model-RIP-1 matrices \cite{indyk2013model}). Consequently, Lemma \ref{lem:rip1} also holds for these model-RIP-1 matrices \cite{indyk2013model}.


First, in \cite{bah2014model} the {\em Model Expander IHT} (MEIHT) was studied for loopless overlapping groups and $D$-ary tree models. Similar to Algorithm \ref{alg:Model-IHT} the MEIHT is a modification of EIHT where the hard threshold operator $\mathcal{H}_k$ is replaced by the projection oracle $\mathcal{P}_{\mathfrak M}$ onto the model $\mathfrak M$. Thus the update of the MEIHT in each iteration is given by
\begin{equation}
\label{eqn:meihtalgo}
\x^{(n+1)} = \mathcal{P}_{\mathfrak M}\left[\x^{(n)} + \med\left(\A\x^{(n)} - \obs\right)\right].
\end{equation}
In \cite{bah2014model} the authors show that this algorithm always returns a solution in the model, which is highly desirable for some applications. The running time is given in the proposition below.
\begin{proposition}[Proposition 3.1, \cite{bah2014model}]
	\label{pro:meiht}
	The runtime of MEIHT is $\bigO\left(kN\bar n\right)$ and $\bigO\left(M^2G\bar n + N\bar n\right)$ for $D$-ary tree models and loopless overlapping group models respectively,  where $k$ is the sparsity of the tree model and $G$ is the number of active groups (i.e. group sparsity of the model), $\bar n$ is  the  number  of iterations, $M$ is the number of groups and $N$ is the dimension of the signal.
\end{proposition}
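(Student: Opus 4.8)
The plan is to bound the cost of one MEIHT iteration \eqref{eqn:meihtalgo} and multiply by the iteration count $n$. I would split the update $\x^{(n+1)} = \mathcal{P}_{\mathfrak M}[\x^{(n)} + \med(\A\x^{(n)} - \obs)]$ into two parts: (i) forming the argument $\w^{(n)} := \x^{(n)} + \med(\A\x^{(n)} - \obs)$ of the projection, and (ii) evaluating the oracle $\mathcal{P}_{\mathfrak M}(\w^{(n)})$. Part (i) is model-independent and cheap, while part (ii) carries the model-specific cost and is the place where the two bounds diverge. For part (i), since $\A$ is the adjacency matrix of a (model) expander graph with left-degree $d$ it has exactly $Nd$ nonzero entries, so the sparse product $\A\x^{(n)}$ costs $\bigO(Nd)$ and subtracting $\obs$ costs $\bigO(m)$. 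The median step evaluates, for each of the $N$ left-nodes $i$, a median of the $d$ residual entries indexed by $\Gm(i)$, which is $\bigO(d)$ per node by linear-time selection and hence $\bigO(Nd)$ in total; the final vector addition is $\bigO(N)$. Treating the degree $d$ as absorbed, part (i) contributes $\bigO(N)$ per iteration, producing the additive $\bigO(nN)$ term.

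\textbf{Projection for $D$-ary trees.} Here the admissible supports are the rooted connected subtrees of size at most $g$, and projecting $\w^{(n)}$ amounts to selecting such a subtree maximising the retained energy $\sum_{i\in\support}|w^{(n)}_i|^p$. I would run the standard rooted-subtree knapsack dynamic program: for each node $v$ and each budget $0\le j\le g$ store the maximum retained energy of a size-$j$ subtree rooted at $v$, and merge the tables of the $D$ children by a bounded knapsack convolution. The classical subtree-size counting argument bounds the total merging work by $\bigO(gN)$ per projection, which dominates the $\bigO(Nd)$ of part (i) when $d\le g$, giving $\bigO(gnN)$ over $n$ iterations and matching the first bound.

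\textbf{Projection for loopless group models.} Now the intersection graph of $\{\mathcal{G}_1,\dots,\mathcal{G}_M\}$ is a forest, and projecting reduces to \MWC: choose a group model $\fG_k$ of at most $k$ groups maximising the total weight of covered indices, where index $i$ has weight $|w^{(n)}_i|^p$ and is counted once even if several chosen groups contain it. I would root each tree of the intersection forest and run a dynamic program over the groups whose state records the current group, the number of groups used so far (up to $k\le M$), and whether the group is selected, resolving the pairwise overlap between a group and its parent so that shared indices are not double-counted. Each group contributes $\bigO(g)$ weight-accounting work and the budget dimension is $\bigO(M)$, which by the same subtree-size merging argument over the $M$ groups yields $\bigO(M^2 g)$ per projection and $\bigO(M^2 g n)$ overall; combined with the $\bigO(nN)$ from part (i) this gives $\bigO(M^2 g n + nN)$.

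\textbf{Main obstacle.} The linear-algebra costs are routine; the real work is part (ii). For trees the delicate point is the amortised $\bigO(gN)$ merging bound, and for loopless groups it is setting up the forest dynamic program so that overlapping indices between adjacent groups are counted exactly once while keeping the per-budget table size $\bigO(Mg)$ — this combined correctness-and-complexity bookkeeping for \MWC on a forest is the crux of the argument. A secondary point to verify is that the median operator and matrix–vector product genuinely collapse to $\bigO(N)$ per iteration (i.e.\ that the expander degree $d$ may be absorbed), which is what produces the clean additive $nN$ term rather than an $ndN$ term.
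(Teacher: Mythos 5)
The paper does not prove this proposition --- it is imported verbatim from \cite{bah2014model} as a quoted result --- so there is no in-paper argument to compare against; judged on its own, your decomposition into per-iteration linear algebra plus model projection, with the rooted-subtree knapsack dynamic program for $D$-ary trees and the budgeted dynamic program over the intersection forest (with pairwise overlap resolved against the parent group) for loopless models, is exactly the standard route taken in the cited source, and the $\bigO(gN)$ and $\bigO(M^2g)$ per-projection bounds you derive are the right ones. The only loose end, which you already flag yourself, is that the $\A\x^{(n)}$ and $\med$ steps genuinely cost $\bigO(dN)$ rather than $\bigO(N)$, so the additive term is $\bigO(ndN)$ unless the left degree $d$ is treated as a constant --- which is how the bound is stated in \cite{bah2014model}.
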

%
%

\subsection{Group Models}\label{sec:Prel_GroupStructures}
The models of interest in this paper are group models. A \emph{group model} is a collection $\mathfrak{G} = \{\cG_1,\ldots,\cG_M\}$ of \emph{groups} of indices, i.e. $\cG_i\subset [N]$, together with a \emph{budget} $G \in [M]$. We denote $\mathfrak G_G$ as the structured sparsity model (i.e. group-sparse model) which contains all supports contained in the union of at most $G$ groups in $\mathfrak G$, i.e.  
\begin{equation}
\label{eqn:G-sparse-model}
\mathfrak G_G:=\left\{\support\subseteq \NN \ | \ \support\subseteq\bigcup_{i\in \mathcal I}\cG_i, ~|\mathcal I|\le G \right\}.
\end{equation}
We will always tacitly assume that $\bigcup_{i=1}^{M} \cG_i = [N]$. We say that a signal $\mathbf x \in \mathbb R^N$ is \emph{$G$-group-sparse} if the support of $\mathbf x$ is contained in $\mathfrak G_G$. If $G$ is clear from the context, we simply say that $\mathbf x$ is \emph{group-sparse}. Let $g_i = |\cG_i|$ and denote $g_{\text{max}} = \max_{i\in [M]} g_i$ as the size of largest support in $\mathfrak G_G$.
The \textit{intersection graph} of a group model is the graph which has a node for each group $\mathcal G_i\in \mathfrak G$ and has an edge between $\mathcal G_i$ and $\mathcal G_j$ if the groups overlap, i.e. if $\mathcal G_i\cap \mathcal G_j \neq \emptyset$; see \cite{baldassarre2016group}. We call a group model \textit{loopless} if the intersection graph of the group model has no cycles. We call a group model \textit{block model} if all groups have equal size and if they are pairwise disjoint. In this case the groups are sometimes called blocks. We define the \textit{frequency} $f$ of a group model as the maximum number of groups an element is contained in, i.e.
\[
f:=\max_{i\in [N]} |\left\{ j\in [M] \ | \ i\in \mathcal G_j \right\}| .
\]

In \cite{baldassarre2016group} besides the latter group models, the more general models are considered where an additional sparsity in the classical sense is required on the signal. More precisely for a given budget $G\in [M]$ and a sparsity $K\in [N]$ they study the structured sparsity model
\begin{equation}
\label{eqn:GK-sparse-model}
\mathfrak G_{G,K}:=\left\{\support\subseteq \NN \ | \ \support\subseteq\bigcup_{i\in \mathcal I}\cG_i, ~|\mathcal I|\le G, \ |\support|\le K \right\}.
\end{equation}
Note that for $K=N$ we obtain a standard group model defined as above.

Both variants of group models defined above clearly are special cases of a structured sparsity model defined in Section \ref{sec:PrelimModelBasedCS}. Therefore all results for structured sparsity models can be used for group-sparse models. To adapt Algorithm \ref{alg:Model-IHT} a model projection oracle $\mathcal P_{\mathfrak G_G}$ (or $\mathcal P_{\mathfrak G_{G,K}}$) has to be provided. Note that for several applications we are not only interested in the optimal support of the latter projection but we want to find at most $G$ groups covering this support. The main work of this paper is to analyse the complexity of the latter problem for group models and to provide efficient algorithms to solve it exactly or approximately. Given a signal $\x \in \mathbb R^N$, the \MS~problem or sometimes called \textit{signal approximation problem} is then to find a support $\support\in\mathfrak G_{G,K}$ together with $G$ groups covering this support such that $||\x-\x_{\support}||_p$ is minimal, i.e. we want to solve the problem
\[
\min_{\substack{\mathcal G_1,\ldots,\mathcal G_G\in \mathfrak G \\ \support\subset \bigcup_{i=1}^{G}\mathcal G_i \\ |\support |\le K}} \| \x - \x_{\support}\|_p .
\]
If the parameter $K$ is not mentioned we assume $K=N$.

Baldassarre et al.~\cite{baldassarre2016group} observed the following close relation to the NP-hard \MWC~problem.
Given a signal $\mathbf x \in \mathbb R^N$, a group-sparse vector $\mathbf{\hat x}$ for which $||\mathbf x-\mathbf{\hat x}||_2^2$ is minimum satisfies $\hat x_i \in \{0,x_i\}$ for all $i \in [N]$.
For a vector with the latter property,
\[
||\mathbf x-\mathbf{\hat x}||_2^2 = \sum_{i=1}^N x_i^2 - \sum_{i=1}^N \hat x_i^2
\]
holds and so minimizing $||\mathbf x-\mathbf{\hat x}||_2^2$ is equivalent to maximizing $\sum_{i=1}^N \hat x_i^2$.
Consequently, the \MS~problem with $K=N$ is equivalent to the problem of finding an index set $\mathcal I\subset [M]$ of at most $G$ groups, i.e. $|\mathcal I| \le G$, maximizing $\sum_{i\in\bigcup_{j\in \mathcal I} \mathcal G_j} x_i^2$.
This problem is called \MWC~in the literature \cite{hochbaum1998analysis}.
Despite the prominence of the latter problem, we will stick to the group model notation, since it is closer to the applications we have in mind and we will leave the regime of \MWC~by introducing more constraints later.

We simplify the notation by defining $w_i = x_i^2$ for all $i \in [N]$. Using this notation, the \MS~problem is equivalent to finding an optimal solution of the following integer program:
\begin{equation}\label{eqn:generalprob-IP}
\begin{aligned}
\max \ & \mathbf w^\top \bu \\
s.t. \quad & \sum_{i \in [M]} v_i \le G \\
& u_j \le \sum_{i:j \in \cG_i} v_i \quad \mbox{for all } j \in [N] \\
& \mathbf \bu\in \{0,1\}^N, \ \mathbf \bv\in \{0,1\}^M
\end{aligned}
\end{equation}
Here, the variable $u_i$ is one if and only if the $i$-th index is contained in the support of the signal approximation, and $v_i$ is one if and only if the group $\cG_i$ is chosen.

Besides the NP-hardness for the general case the authors in \cite{baldassarre2016group} show that the \MS~problem can be solved in polynomial time by dynamic programming for the special case of loopless groups. Furthermore the authors show that if the intersection graph is bipartite the projection problem can be solved in polynomial time by relaxing problem \eqref{eqn:generalprob-IP}. Similar results are obtained for the more general problem, where additional to the group-sparsity the classical $K$-sparsity is assumed, i.e. the additional constraint 
\[\sum_{i\in [N]}u_i\le K\]
is added to problem \eqref{eqn:generalprob-IP}.

As stated in Section \ref{sec:PrelimModelBasedCS}, the authors of \cite{baraniuk2010model} first study a special case of group models, i.e. block models, where the groups are non-overlapping and are all of equal size. The sample complexity they derived in that work for sub-Gaussian measurement matrices is $m = \bigO\left(Gg + \log\left(N/(Gg)\right)\right)$, where $g$ is the fixed block size. 
However, in \cite{bah2014model} the authors studied group models in the sparse matrix setting, besides other results they proposed the MEIHT algorithm for tree and group models. The more relevant result to this work show that for {\em loopless} overlapping group-sparse models with maximum group size $g_{\text{max}}$, using model expander measurement matrices, the number of measurements required for successful recovery is $m = Gg_{\text{max}}\log\left(N/(Gg_{\text{max}})\right)/\log\left(Gg_{\text{max}}\right)$; see \cite{bah2014model}. This results holds for general groups, the ``{\em looplessness}'' condition is only necessary for the polynomial time reconstruction using the MEIHT algorithm. Therefore, this sample complexity result also holds for the general group models we consider in this manuscript. 

\subsubsection{Group Lasso}
The classical Lasso approach for $k$-sparse signals seeks to minimize a quadratic error penalized by the $\ell_1$-norm \cite{foucart2013mathematical}. More precisely, for a given $\lambda>0$ we want to find an optimal solution of the problem
\begin{equation}\label{eqn:lasso}
\min_{\x} \|A\x-\obs\|_2^2 + \lambda\|\x\|_1 .
\end{equation}
It is well known that using the latter approach for appropriate choices of $\lambda$ leads to sparse solutions.

The Lasso approach was already extended to group models in \cite{yuan2006model} and afterwards studied in several works for non-overlapping groups; see \cite{huang2010benefit,kolar2011union,lounici2011oracle,negahban2011simultaneous}. The idea is again to minimize a loss function, e.g. the quadratic loss, and to penalize the objective value for each group by a norm of the weights of the recovered vector restricted to the items in each group. An extension which can also handle overlapping groups was studied in \cite{zhao2006grouped,jenatton2011proximal}. In \cite{obozinski2011group} the authors study what they call the \textit{latent group Lasso}. To this end they consider a loss function $L:\RR^N\to \RR$ and propose to solve the $\ell_1 /\ell_2$-penalized problem
\begin{equation}\label{eqn:grouplassooverlap}
\begin{aligned}
\min \ & L(\x) + \lambda \sum_{\mathcal G\in\mathfrak G} d_{\mathcal G} \| \w^{\mathcal G}\|_2 \\
s.t. \quad & \x=\sum_{\mathcal G\in\mathfrak G}\w^{\mathcal G}\\
& \supp\left( \w^{\mathcal G}\right)\subset \mathcal G \ \forall\mathcal G\in\mathfrak G\\
& \x\in\RR^N, \w^{\mathcal G}\in\RR^N \ \forall \mathcal G\in\mathfrak G
\end{aligned}
\end{equation}
for given weights $\lambda>0$ and $d_{\mathcal G}\ge 0$ for each $\mathcal G\in \mathfrak G$. The idea is that for ideal choices of the latter weights a solution of Problem \eqref{eqn:grouplassooverlap} will be sparse and its support is likely to be a union of groups. Nevertheless it is not guaranteed that the number of selected groups is optimal as it is the case for the iterative methods in the previous sections. Note that equivalently we can replace each norm $\|\w^{\mathcal G}\|$ by a variable $z^{\mathcal G}$ in the objective function and add the quadratic constraint $\| \w^{\mathcal G}\|_2^2\le (z^{\mathcal G})^2$. Hence Problem \eqref{eqn:grouplassooverlap} can be modelled as a quadratic problem and can be solved by standard solvers like CPLEX.

The $l_0$ counterpart of Problem \eqref{eqn:grouplassooverlap} was considered in \cite{huang2011learning} under the name \textit{block coding} and can be formulated as
\begin{equation}\label{eqn:grouplassoL0}
\begin{aligned}
\min \ & L(\x) + \lambda \sum_{\mathcal G\in\tilde{\mathfrak G}} d_{\mathcal G} \| \w^{\mathcal G}\|_2 \\
s.t. \quad & \x=\sum_{\mathcal G\in\tilde{\mathfrak G}}\w^{\mathcal G}\\
& \supp\left(\w^{\mathcal G}\right)\subset \mathcal G \ \forall\mathcal G\in\tilde{\mathfrak G}\\
&\tilde{\mathfrak G}\subset\mathfrak G\\
& \x\in\RR^N, \w^{\mathcal G}\in\RR^N \ \forall \mathcal G\in\tilde{\mathfrak G} .
\end{aligned}
\end{equation}
Note that in contrast to Problem \eqref{eqn:grouplassooverlap} an easy reformulation of Problem \eqref{eqn:grouplassoL0} into a continuous quadratic problem is not possible. Nevertheless we can reformulate it using the mixed-integer programming formulation
\begin{equation}\label{eqn:grouplassoL0IPFormulation}
\begin{aligned}
\min \ & L(\x) + \lambda \sum_{\mathcal G\in\mathfrak G} d_{\mathcal G} \bv^{\mathcal G} \\
s.t. \quad & \x=\sum_{\mathcal G\in\mathfrak G}\w^{\mathcal G}\\
& \supp\left(\w^{\mathcal G}\right)\subset \mathcal G \ \forall\mathcal G\in\mathfrak G\\
&w_i^{\mathcal G}\le M_i\bv^{\mathcal G} \ \forall i\in[N], \mathcal G\in\mathfrak G\\
&-M_i\bv^{\mathcal G}\le \w_i^{\mathcal G}\ \forall i\in[N], \mathcal G\in\mathfrak G\\ 
& \x\in\RR^N, \w^{\mathcal G}\in\RR^N, \bv^{\mathcal G}\in\left\{ 0,1\right\} \ \forall \mathcal G\in\mathfrak G
\end{aligned}
\end{equation}
where $M_i\in\RR$ can be chosen larger or equal to the entry $|x_i|$ of the true signal for each $i\in [N]$. The variables $\bv^{\mathcal G}\in\left\{ 0,1\right\}$ have value $1$ if and only if group $\mathcal G$ is selected for the support of $\x$. As for the $\ell_1 / \ell_2$ variant it is not guaranteed that the number of selected groups is optimal. Note that the latter problem is a mixed-integer problem and therefore hard to solve in large dimension in general. Furthermore the efficiency of classical methods as the branch \& bound algorithm depend on the quality of the calculated lower bound which depends on the values $M_i$. Hence in practical applications where the true signal is not known good estimations of the $M_i$ values are crucial for the success of the latter method. Another drawback is that the best values for $\lambda$ and the weights $d_{\mathcal G}$ are not known in advance and have to be chosen appropriately for each application.

We study Problems \eqref{eqn:grouplassooverlap} and \eqref{eqn:grouplassoL0} computationally in Section \ref{sec:computations}.

\subsection{Approximation Algorithms for Model-Based Compressed Sensing}\label{sec:preliminaries_AMIHT}
As mentioned in the last section solving the projection problem, given in Definition \ref{def:projection_oracle}, may be computationally hard. To overcome this problem the authors in \cite{hegde2015fast,hegde2015approximation} present algorithms, based on the idea of IHT (and CoSaMP), which instead of solving the projection problems exactly, use two approximation procedures called head- and tail-approximation. In this section we will shortly describe the concept and the results in \cite{hegde2015fast,hegde2015approximation}. Note that we just present results related to the IHT, although similar results for the CoSaMP were derived as well in \cite{hegde2015fast,hegde2015approximation}.

Given two structured sparsity models $\mathfrak M, \mathfrak M_H$ and a vector $\x$, let $\mathcal H$ be an algorithm that computes a vector $\mathcal H(\x)$ with support in $\mathfrak M_H$. Then, given some $\alpha \in \mathbb R$ (typically $\alpha < 1$) we say that $\mathcal H$ is an \emph{$(\alpha,\mathfrak M, \mathfrak M_H,p)$-head approximation} ~if
\begin{equation}\label{eqn:head-condition}
|| \mathcal H(\x)||_p \ge \alpha \cdot || \x_{\mathcal S}||_p \mbox{ for all } \mathcal S \in \mathfrak M.
\end{equation}
Note that the support of the vector calculated by $\mathcal H$ is contained in $\mathfrak M_H$ while the approximation guarantee must be fulfilled for all supports in $\mathfrak M$.

Moreover given two structured sparsity models $\mathfrak M, \mathfrak M_T$  let $\mathcal T$ be an algorithm which computes a vector $\mathcal T(\x)$ with support in $\mathfrak M_T$. Given some $\beta \in \mathbb R$ (typically $\beta > 1$) we say that $\mathcal T$ is a \emph{$(\beta,\mathfrak M, \mathfrak M_T,p)$-tail approximation} if
\begin{equation}\label{eqn:tail-condition}
|| \x - \mathcal T(\x) ||_p \le \beta \cdot || \x - \x_{\mathcal S}||_p \mbox{ for all } \mathcal S \in \mathfrak M.
\end{equation}
Note that in general a head approximation does not need to be a tail approximation and vice versa.

The cases studied in \cite{hegde2015approximation} are  $p=1$ and $p=2$. For the case $p=2$ the authors propose an algorithm called \textit{Approximate Model-IHT} (AM-IHT), shown in Algorithm \ref{alg:am-iht}.

\begin{algorithm}\caption{(AM-IHT)}\label{alg:am-iht}
\inp $\A$, $\obs$, $\noise$ with $\A\mathbf x=\obs+\noise$ and $\mathfrak M$\\
\out $\text{AM-IHT}(\obs,\A,\mathfrak M)$, an approximation to $\x$
\begin{algorithmic}[1]
\State $\x^0 \gets 0$; $n\gets 0$
\While{halting criterion is false} 
\State $\x^{(n+1)} \gets \mathcal T(\x^{(n)} + \mathcal H(\A^*(\obs-\A\x^{(n)})))$ \label{step:head_tail_step}
\State $n\gets n+1$
\EndWhile
\State Return: $\text{AM-IHT}(\obs,\A,\mathfrak M) \gets \x^{(n)}$
\end{algorithmic}
\end{algorithm}

Assume that $\mathcal T$ is a $(\beta,\mathfrak M, \mathfrak M_T,2)$-tail approximation and $\mathcal H$ is a $(\alpha,\mathfrak M_T\oplus\mathfrak M, \mathfrak M_H,2)$-head approximation where $\mathfrak M_T\oplus\mathfrak M$ is the Minkowski sum of $\mathfrak M_T$ and $\mathfrak M$. Furthermore we assume the condition
\begin{equation}\label{eqn:condition_convergenceofAMIHT}
\alpha^2 > 1-(1+\beta)^{-2}
\end{equation}
holds. The authors in \cite{hegde2015approximation} prove that for a signal $\x\in\RR^N$ with $\supp (\x )\in\mathfrak M$, noisy measurements $\obs=\A\x+\noise$ where $\A$ has $\mathfrak M \oplus \mathfrak M_T\oplus\mathfrak M_H$-model RIP with RIC $\delta$, 
Algorithm \ref{alg:am-iht} calculates a signal estimate $\hat{\x}$ satisfying 
\[
\| \x-\hat{\x} \|_2 \le \tau\|\noise\|_2
\]
where $\tau$ depends on $\delta, ~\alpha$ and $\beta$. Note that the condition \eqref{eqn:condition_convergenceofAMIHT} holds e.g. for approximation accuracies $\alpha>0.9$ and $\beta<1.1$.

For the case $p=1$ the authors replace Step \ref{step:head_tail_step} in Algorithm \ref{alg:am-iht} by the update
\[
\x^{(n+1)} \gets \mathcal T(\x^{(n)} + \mathcal H(\med(\obs-\A\x^{(n)})))
\]
where $\med$ is the median operator defined as in Section \ref{sec:PrelCS}. Under the same assumptions as above, but considering $p=1$ for the head- and tail-approximations and $\A$ having the $\mathfrak M$-RIP-$1$, the authors in \cite{hegde2015approximation} show convergence of the adapted algorithm.\\

\subsection{Comparison to Related Works}\label{sec:comparison}
In a more illustrative way in Tables \ref{tab:model-encoding} and \ref{tab:model-decoding} below we show where our results stand vis-a-vis other results. In Table \ref{tab:model-encoding} we show the studied models together with the derived sample complexity and the studied class of measurements matrices. In Table \ref{tab:model-decoding} we present the names of the studied algorithms, the class of the model projection, the class of the algorithm used to solve the model projection, the runtime complexity of the projection problem and the class of instance optimality.

\begin{table}[h]
	\centering
	\caption{Comparison of model-based compressive sensing results. This table shows the studied models and the sensing framework, particularly showing sample complexities for the various models together with the appropriate measurement matrices (referred as ensemble). Here $N$ is the dimension of the signal, $k$ is the standard sparsity of a signal, $G$ is the number of active groups (also referred to as group/block sparsity), $g$ is the block size, $g_{\max}$ is the size of the largest group, $s$ is the size of the support of a forest (union of sub-graphs) in a given graphs, $c$ is the maximum number of connected components in the forest, $B$ is a bound on the total weights of edges in the forest, and $\rho(H)$ is the  weight-degree  of a graph $H$, see respective references of the complexities for further explanation of these terms.
	}
	\begin{tabular}{|c||c||c|c|}
		\hline
		\multirow{3}{*}{\bf Paper}		& {\bf Signal}  & \multicolumn{2}{c|}{\bf Sensing}\\
		\cline{2-4}
										& \multirow{2}{*}{\bf Model}	& {\bf Sample} 			& \multirow{2}{*}{\bf Ensemble} \\
										&								&	{\bf complexity}	&  	\\
		\hline
		\hline
		{\cite{obozinski2011group}}					& Groups with overlaps		& {--}	 														& {--}			\\
		\hline
		\cite{eldar2009robust}						& Blocks		& $G\log\left(\frac{N}{Gg}\right)$											&	Gaussian	 \\
		\hline
		\multirow{2}{*}{\cite{baraniuk2010model}}	& Binary trees	& $k$ 																		&	\multirow{2}{*}{Subgaussian}\\
													& Blocks		& $Gg + G\log\left(\frac{N}{G}\right)$ 									&		 	 \\
		\hline
		\cite{schmidt2013constrained}	   			& Constrained earth mover distance & --								&	--	\\
		\hline
		\multirow{2}{*}{\cite{indyk2013model}}		& Binary Trees	& $\frac{k\log\left(\frac{N}{k}\right)}{\log\log\left(\frac{N}{k}\right)}$ 	&	\multirow{2}{*}{Model RIP-1 matrices}	\\
													& Blocks				& $\frac{Gg\log(N)}{\log(Gg)}$ 										&	 \\
		\hline
		\multirow{2}{*}{\cite{bah2014model}}		& $D$-ary Trees & $\frac{k\log\left(\frac{N}{k}\right)}{\log\log\left(\frac{N}{k}\right)}$	&	\multirow{2}{*}{Expander}  \\
													& Groups with loopless overlaps  	& $\frac{Gg_{\max}\log(N)}{\log\left(Gg_{\max}\right)}$						&	  \\
		\hline
		\cite{hegde2015nearly}	   					& Weighted graph model & $s\left(\log \left(\rho(H)\right) + \log\left(\frac{B}{s}\right)\right)+c\log\left(\frac{N}{c}\right)$								&	Gaussian	\\
		\hline
		\cite{kyrillidis2016convex}	   				& Blocks		& $Gg+G\log\left(\frac{N}{G}\right)$											&	Expander	\\
		\hline
		This   										& \multirow{2}{*}{Groups with overlaps} 	& \multirow{2}{*}{$\frac{Gg_{\max}\log(N)}{\log\left(Gg_{\max}\right)}$}	& {Subgaussian} \\
		work 										&  &     																		&	\& {Expander}	\\
		\hline
	\end{tabular}
	\label{tab:model-encoding}
\end{table}

\begin{table}[h]
	\centering
	\caption{Comparison of model-based compressive sensing results. This table shows results about the reconstruction approaches used for the models stated in Table \ref{tab:model-encoding}. Here $M$ is the number of groups, $\bar n$ is the number of iterations the algorithm takes to get a given solution, $\ell_{21}$ refers to the mixed $\ell_1$ and $\ell_2$ group norm, and $w_T$ is the treewidth of the incidence graph. Abbreviations: SOCP - second order cone program, DP - dynamic programming, BD - Bender's decomposition, CD - covariate  duplication, and BCD - block-coordinate descent. For space purposes we neglect the {\em big-O} notation in the complexities. For algorithms whose runtimes are not explicitly stated but implied to be either {\em polynomial} or {\em exponential} in their respective references, we just state this without the explicit expression. 
	}
	\begin{tabular}{|c||c|c|c|c|c|}
		\hline
		\multirow{3}{*}{\bf Paper}					& \multicolumn{5}{c|}{\bf Reconstruction} \\
		\cline{2-6}
													& {\bf Algorithm} 	& {\bf Model} 		& {\bf Projection}& {\bf Runtime} 	 	& {\bf Instance}\\
													& {\bf name} 		& {\bf projection} 	& {\bf algorithm}				& {\bf complexity} 	 		& {\bf optimality}\\
		\hline
		\hline
		{\cite{obozinski2011group}}					& Group-LASSO		& {Exact}   	& {CD \& BCD}			& {Polynomial}				& {--}\\
		\hline
		\cite{eldar2009robust}						& $\ell_{21}$-minimization 	& Exact 		& {SOCP}			& Polynomial	 				& $\ell_2/\ell_{21}$\\
		\hline
		\multirow{2}{*}{\cite{baraniuk2010model}}	& \multirow{2}{*}{CoSaMP \& IHT}	& \multirow{2}{*}{Approximate} 	& \multirow{2}{*}{--} 	& \multirow{2}{*}{$N\log^2(N)$}	 & \multirow{2}{*}{--}\\
		 											&									&  								&    & 			  					& \\
		\hline
		\multirow{2}{*}{\cite{schmidt2013constrained}}	   			& EMD-IHT			& \multirow{2}{*}{Exact} 	& \multirow{2}{*}{--} 	& \multirow{2}{*}{Polynomial}	 		& \multirow{2}{*}{--}\\
			   										& EMD-CoSaMP		&  							&  	& 	& \\
		\hline
		\multirow{2}{*}{\cite{indyk2013model}}		& Model-based 		& \multirow{2}{*}{Approximate}	& \multirow{2}{*}{--}	& \multirow{2}{*}{Exponential} 	& \multirow{2}{*}{$\ell_1/\ell_1$}\\
													& sparse recovery 	&  								& 	&   						& \\
		\hline
		\multirow{2}{*}{\cite{bah2014model}}		& \multirow{2}{*}{MEIHT}   	& \multirow{2}{*}{Exact} & \multirow{2}{*}{DP}	& $kN\bar n$ 	& \multirow{2}{*}{$\ell_1/\ell_1$}\\
													&							& 						& 	& $\left(M^2G+N\right)\bar n$						& \\
		\hline
		\multirow{2}{*}{\cite{hegde2015nearly}}	   	& PCSF-TAIL			& \multirow{2}{*}{Approximate} 	& \multirow{2}{*}{--} 	& \multirow{2}{*}{$N\log^4(N)$}		& \multirow{2}{*}{--}\\
		 											& GRAPH-CoSaMP		&  	&  	&		& \\
		\hline
		\cite{kyrillidis2016convex}	   				& $\ell_{21}$-minimization	& Exact 	& {SOCP}	& Polynomial 		& $\ell_{21}/\ell_{21}$\\
		\hline
		   											& MEIHT				& Exact 	& \multirow{2}{*}{DP or BD}	& $(N+M)\left(w_T^2 5^{w_T} G^2 N\right) \bar n$	& \multirow{2}{*}{$\ell_1/\ell_1$}\\
		This										& Model-IHT	    	& Exact 	& 							& or exponential		 						& \\
		work 										& AM-IHT	    	& Approximate & {LP-rounding}	& \multirow{2}{*}{Polynomial}						& \multirow{2}{*}{--}\\
		 											& AM-EIHT	    	& Approximate & {\& greedy}	&  						& \\
		\hline
	\end{tabular}
	\label{tab:model-decoding}
\end{table}

\section{Algorithms with Exact Projection Oracles}\label{sec:exactprojections}
In this section we study the exact \MS~problem which has to be solved iteratively in the Model-IHT and the MEIHT. We extend existing results for group-sparse models and pass from loopless overlapping group models (which was the most general prior to this work) to overlapping group models of bounded treewidth and to general group models without any restriction on the structure. The graph representing a loopless overlapping group model has a treewidth of 1.

We start by showing that it is possible to perform exact projections onto overlapping groups with bounded treewidth using dynamic programming, see Section \ref{sec:treewidth}. While this procedure has a polynomial run-time bound it is restricted to the class of group models with bounded treewidth. Nevertheless we prove that the exact projection problem is \NP-hard if the incidence graph is a grid which is the most basic graph structure without bounded treewidth. For the sake of completeness we solve the exact projection problem for all instances of group models by a method based on Benders' decomposition in Section \ref{sec:exact}. Solving an NP-hard problem this method does not yield a polynomial run-time guarantee but works well in practice as shown in \cite{cordeau2018benders}. In Section \ref{meiht_new} we present an appropriately modified algorithm (MEIHT) with exact projection oracles for the recovery of signals from structured sparsity models. We derive corollaries for the general group-model case from existing works about run-time and convergence of this modified algorithm.  

Recall the following notation: $\cG_i$ denotes a group of size $g_i$, $i\in [M]$, and a group model is a collection $\mathfrak{G} = \{\cG_1,\ldots,\cG_M\}$. The group-sparse model of order $G$ is denoted by $\fG_G$, which contains all supports $\support$ which are contained in the union of at most $G$ groups of $\fG$, i.e. $\support\subseteq \bigcup_{j\in \mathcal{I}} \cG_j, ~ \mathcal{I}\subseteq [M]$ and $|\mathcal{I}|\leq G$; see~\eqref{eqn:G-sparse-model}. We will interchangeably say that $\x$ or $\support$ is $\fG_G$-sparse. Clearly group models are a special case of structured sparsity models. Assume $s_{\text{max}}$ is the size of the maximal support which is possible by selecting $G$ groups out of $\fG$. For $g_{\text{max}}$ denoting the maximal size of a single group in $\mathfrak G$, i.e.,
\[
g_{\text{max}} = \max_{i\in [M]} |\mathcal G_i| ,
\]
we have $s_{\text{max}}\in \mathcal O\left( G g_{\text{max}}\right)$. Furthermore the number of possible supports is in $\mathcal O(M^G)$. Therefore applying the result from Section \ref{sec:PrelimModelBasedCS} we obtain
\begin{equation}\label{eqn:measurementboundgroupmodels}
m=\mathcal O\left( \delta^{-2}G g_{\text{max}} \log(\delta^{-1}) + G\log(M) \right)
\end{equation}
as the number of required measurements for a sub-Gaussian matrix to obtain the group-model-RIP with RIC $\delta$  with high probability, which induces the convergence of Algorithm \ref{alg:Model-IHT} for small enough $\delta$. 

\subsection{Group models of Low Treewidth}\label{sec:treewidth}

One approach to overcome the hardness of the \MS~problem is to restrict the structure of the group models considered.
To this end we follow Baldassarre et al.~\cite{baldassarre2016group} and consider two graphs associated to a group model $\fG$.

The \emph{intersection graph} of $\fG$, $I(\fG)$, is given by the vertex set $V(I(\fG)) = \fG$, and the edge set 
\[E(I(\fG)) = \{ RS : R,S \in \fG, R\neq S \mbox{ and } R \cap S \neq \emptyset \}.\]
The \emph{incidence graph} of $\fG$, $B(\fG)$, is given by the vertex set $V(B(\fG)) = [N] \cup \fG$, and the edge set 
\[E(B(\fG)) = \{ eS : e\in [N], S \in \fG \mbox{ and } e \in S \}.\]
Note that the incidence graph is bipartite since an edge is always adjacent to an element $e$ and a group $S$. See Fig.~\ref{fig:graph-examples} for a simple illustration of the two constructions.
\begin{figure}[h!]
	\centering
		\includegraphics[width=0.55\textwidth]{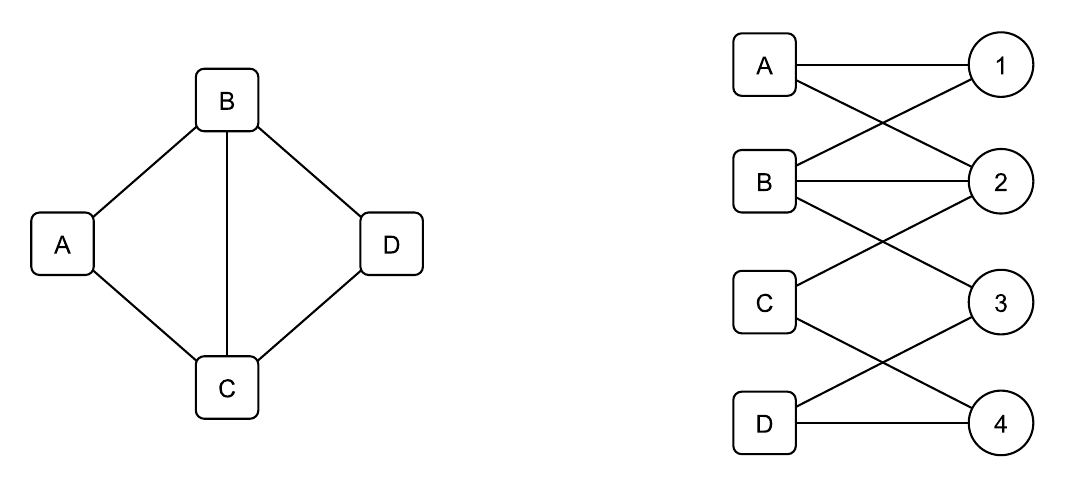}
		\caption{Intersection graph and incidence graph of the group model $\mathfrak G=\{A=\{1,2\},B=\{1,2,3\},C=\{2,4\},D=\{3,4\}\}$.}
	\label{fig:graph-examples}
\end{figure}
Baldassarre et al.~\cite{baldassarre2016group} prove that there is a polynomial time algorithm to solve the \MS~problem in the case that the intersection graph is an acyclic graph.
Their algorithm uses dynamic programming on the acyclic structure of the intersection graph.

We generalize this approach and show that the same problem can be solved in polynomial time if the treewidth of the incidence graph is bounded. Following Proposition~\ref{prop:I(M)-versus-B(M)} below, this implies that the \MS~Problem can be solved in polynomial time if the treewidth of the intersection graph is bounded. We proceed by formally introducing the relevant concepts.
\subsection{Tree Decomposition}
Let $\bar G=(V,E)$ be a graph.
A \emph{tree decomposition} of $\bar G$ is a tree $T$ where each node $x \in V(T)$ of $T$ has a \emph{bag} $B_x \subseteq V$ of vertices of $\bar G$ such that the following properties hold:
\begin{enumerate}
	\item $\bigcup_{x \in V(T)} B_x = V$.
	\item If $B_x$ and $B_y$ both contain a vertex $v \in V$, then the bags of all nodes of $T$ on the path between $x$ and $y$ contain $v$ as well. Equivalently, the tree nodes containing the vertex $v$ form a connected subtree of $T$.
	\item For every edge $vw$ in $E$ there is some bag that contains both $v$ and $w$. That is, vertices in $V$ can be adjacent only if the corresponding subtrees in $T$ have a node in common.
\end{enumerate}
The \emph{width} of a tree decomposition is the size of its largest bag minus one, i.e. $\max_{x \in V(T)} |B_x|-1$. The \emph{treewidth of $\bar G$}, $\mbox{tw}(\bar G)$, is the minimum width among all possible tree decompositions of $\bar G$.

Intuitively, the treewidth measures how `treelike' a graph is: the smaller the treewidth is, the more treelike is the graph.
The graphs of treewidth one are the acyclic graphs. Fig.~\ref{fig:tree-decomposition} shows a graph of treewidth 2, together with a tree decomposition.

\begin{figure}[h]
	\centering
		\includegraphics[width=0.5\textwidth]{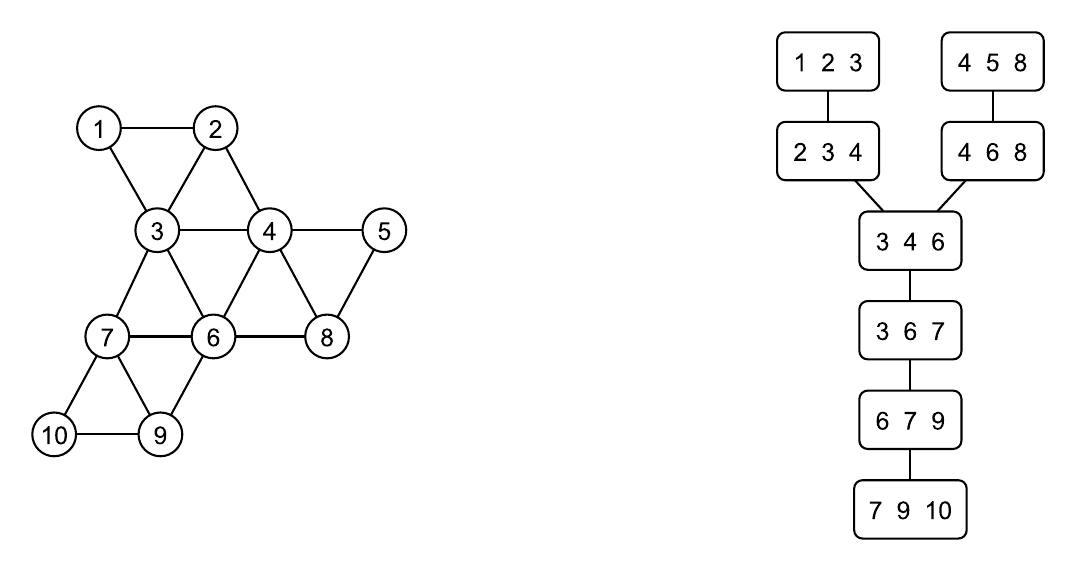}
	\caption{A graph and a tree decomposition of width 2.}
	\label{fig:tree-decomposition}
\end{figure}

Before stating any algorithms, we discuss the relation of the treewidth of the intersection and the incidence graphs of a given group model.

When bounding the treewidth of the graphs associated to a group model, it makes sense to consider the incidence graph rather than the intersection graph.
This is due to the following simple observation.

\begin{proposition}\label{prop:I(M)-versus-B(M)}
For any group model $\fG$ it holds that $\mbox{tw}(B(\fG)) \le \mbox{tw}(I(\fG))+1$.
However, for every $t$ there exists a group model $\fG$ such that $\mbox{tw}(I(\fG))-\mbox{tw}(B(\fG)) = t$.
\end{proposition}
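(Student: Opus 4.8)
The plan is to prove the two assertions separately. For the inequality $\mbox{tw}(B(\fG)) \le \mbox{tw}(I(\fG))+1$ I would start from an optimal tree decomposition of the intersection graph and surgically insert the element-vertices, paying only a single extra unit of width. For the second assertion I would exhibit an explicit family of group models whose intersection graphs are cliques (hence of unbounded treewidth) while their incidence graphs are trees (hence of treewidth one).

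For the upper bound, fix a tree decomposition $T$ of $I(\fG)$ of width $\mbox{tw}(I(\fG))$; by construction its bags contain groups of $\fG$. The key structural observation is that for a fixed element $e \in [N]$ the set $Q_e := \{\, \cG \in \fG : e \in \cG \,\}$ of groups containing $e$ is a clique in $I(\fG)$, since any two such groups share $e$. By the standard fact that every clique of a graph lies inside a single bag of any of its tree decompositions (a consequence of the Helly property for subtrees of a tree), there is a node $x_e \in V(T)$ with $Q_e \subseteq B_{x_e}$. I would then build a tree decomposition $T'$ of $B(\fG)$ by keeping $T$ and, for each element $e$, attaching a new pendant leaf $y_e$ to $x_e$ with bag $B_{y_e} := Q_e \cup \{e\}$. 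One checks the three axioms: coverage is immediate; for the connectivity axiom the nodes containing a fixed group $\cG$ still form a subtree, because every new leaf $y_e$ containing $\cG$ is attached to $x_e$, which already contains $\cG$, while each element $e$ occupies the single bag $B_{y_e}$; and every incidence edge $e\cG$ with $e \in \cG$ is covered by $B_{y_e}$. Since $|B_{y_e}| = |Q_e| + 1 \le |B_{x_e}| + 1 \le \mbox{tw}(I(\fG)) + 2$ and the inherited bags are unchanged, the width of $T'$ is at most $\mbox{tw}(I(\fG))+1$. Using a \emph{fresh} leaf per element (rather than dumping all elements into their chosen bags, which could be the same bag for many elements) is exactly what keeps the width blow-up at a single unit, and this is the point that needs the most care.

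For the lower bound, given an integer $t \ge 0$ I would set $M = t+2$ and take the group model $\fG = \{\cG_1,\dots,\cG_M\}$ on the ground set $\{0,1,\dots,M\}$ with $\cG_i = \{0,i\}$, so that all groups share the common element $0$ and each carries one private element. Any two groups intersect in $0$, whence $I(\fG)$ is the complete graph $K_M$ and $\mbox{tw}(I(\fG)) = M-1$. On the other hand $B(\fG)$ has the $2M+1$ vertices $\{0,1,\dots,M\}\cup\fG$ and exactly $2M$ edges (the star edges $0\cG_i$ together with the pendant edges $\cG_i\,i$); being connected with one fewer edge than vertices, it is a tree, so $\mbox{tw}(B(\fG)) = 1$. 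Hence $\mbox{tw}(I(\fG)) - \mbox{tw}(B(\fG)) = (M-1)-1 = t$, as required.

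The only genuinely delicate point is the width accounting in the upper bound; everything else reduces to a routine verification of the tree-decomposition axioms or a direct computation on the explicit family.
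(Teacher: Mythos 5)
Your proof is correct and follows essentially the same route as the paper: the upper bound is obtained by attaching, for each element $e$, a pendant leaf with bag $Q_e \cup \{e\}$ to a bag containing the clique $Q_e$ (guaranteed by the Helly property of subtrees), and the lower bound uses the same ``sunflower'' family of two-element groups sharing a common element, whose intersection graph is a clique while the incidence graph is a tree. The width accounting and the verification of the decomposition axioms are both handled correctly.
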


This statement is not necessarily new, but we quickly prove it in our language in order to be self-contained.

\begin{proof}[Proof of Proposition~\ref{prop:I(M)-versus-B(M)}.]
To see the first assertion, let $\fG$ be a group model. Let $T$ be a tree decomposition of $I(\fG)$ of width $\mbox{tw}(I(\fG))$.
In the following, we attach leaves to $T$, one for each element in $[N]$, and obtain a tree decomposition of $B(\fG)$.
Each leaf will contain at most $\mbox{tw}(I(\fG))$ many elements of $\fG$ and at most one element of $[N]$. 
Hence we get $\mbox{tw}(B(\fG)) \le \mbox{tw}(I(\fG))+1$.

To construct the tree decomposition of $B(\fG)$ pick any $i \in [N]$, and let $\fG_i$ be the set of groups in $\fG$ containing $i$.
Since all groups in $\fG_i$ contain $i$, the set $\fG_i$ is a clique in $I(\fG)$.\footnote{Recall that a \emph{clique} in a graph is a set of mutually adjacent vertices.}
Moreover, since $T$ is a tree decomposition of $I(\fG)$, the subtrees of the groups in $\fG_i$ mutually share a node.
As subtrees of a tree have the Helly property, there is at least one node $x$ of $T$ such that $\fG_i \subseteq B_x$.
We now add a new node $x_i$ with bag $B_{x_i} = \fG_i \cup \{i\}$ and an edge between $x_i$ and $x$ in $T$. Doing this for all $i\in [N]$ simultaneously, it is easy to see that we arrive at a tree decomposition $T'$ of $B(\fG)$ of width at most $\mbox{tw}(I(\fG))+1$ which proves the first assertion.

To prove the second assertion consider, for any $t$, the group model $\fG$ where 
$$\fG = \{\cG_i : i\in [t+2]\} \mbox{ with } \cG_i = \{i,t+3\}.$$
Note that $B(\fG)$ is a tree, hence $\mbox{tw}(B(\fG))=1$.
In $I(\fG)$, however, the set $\fG$ is a clique of size $t+2$.
Thus, $\mbox{tw}(I(\fG))=t+1$ which implies $\mbox{tw}(I(\fG))-\mbox{tw}(B(\fG)) = t$. \qed
\end{proof}

Consider now a tree decomposition $T$ of a graph $\bar G$.
We say that $T$ is a \emph{nice tree decomposition} if every node $x$ is of one of the following types.
\begin{itemize}
	\item \textit{Leaf:} $x$ has no children and $B_x=\emptyset$.
	\item \textit{Introduce:} $x$ has one child, say $y$, and there is a vertex $v \notin B_y$ of $\bar G$ with $B_x = B_y \cup \{v\}$.
	\item \textit{Forget:} $x$ has one child, say $y$, and there is a vertex $v \notin B_x$ of $\bar G$ with $B_y = B_x \cup \{v\}$.
	\item \textit{Join:} $x$ has two children $y$ and $z$ such that $B_x=B_y=B_z$.
\end{itemize}
This kind of decomposition limits the structure of the difference of two adjacent nodes in the decomposition. A folklore statement (explained in detail in the classic survey by Kloks~\cite{DBLP:books/sp/Kloks94}) says that such a nice decomposition is easily computed given any tree decomposition of $\bar G$ without increasing the width.

\subsection{Dynamic Programming}
In this section we derive a polynomial time algorithm for the \MS~problem for fixed treewidth. As in Section \ref{sec:Prel_GroupStructures} we assume we have a given signal $x\in\RR^N$ and define $w\in\RR^N$ with $w_i=x_i^2$. In the following we use the notation
\[
w(\mathcal S ):=\sum_{i\in\mathcal G}w_i 
\] 
for a subset $\mathcal S\subseteq [N]$. The algorithm is presented using a nice tree decomposition of the incidence graph $B(\fG)$ and uses the following concept.
Fix a node $x$ of the decomposition tree of $B(\fG)$, a number $i$ with $0 \le i \le G$ and a map $c:B_x\to \{0,1,1_?\}$.
We say that $c$ is a \emph{colouring} of $B_x$.
We consider solutions to the problem \MS$(x,i,c)$ which is defined as follows.

A set $\cS \subseteq \fG$ is a feasible solution of \MS$(x,i,c)$ if
\begin{enumerate}[(a)]
	\item $\cS$ contains only groups that appear in some bag of a node in the subtree rooted at node $x$,
	\item $|\cS| = i$,
	\item $\cS \cap B_x$ contains exactly those group-vertices of $B_x$ that are in $c^{-1}(1)$, that is, 
	\[\cS \cap c^{-1}(1) = \fG \cap c^{-1}(1), \text{ and}\]
	\item of the elements in $B_x$, $\cS$ covers exactly those that are in $c^{-1}(1)$. Formally, \[\left(\bigcup \cS\right) \cap B_x = [N]  \cap c^{-1}(1).\]
\end{enumerate}
The objective value of the set $\cS$ is given by $w(\bigcup \cS) + w(c^{-1}(1_?))$.
Intuitively, a feasible solution to \MS$(x,i,c)$ does not cover elements labelled $0$ or $1_?$, but covers all elements labelled $1$.
The elements labelled $1_?$ are assumed to be covered by groups not yet visited in the tree decomposition.

If \MS$(x,i,c)$ does not admit a feasible solution, we say that \MS$(x,i,c)$ is infeasible.
The maximum objective value attained by a feasible solution to \MS$(x,i,c)$, if feasible, we denote by $\mbox{OPT}(x,i,c)$.
If \MS$(x,i,c)$ is infeasible, we set $\mbox{OPT}(x,i,c) = - \infty$.

Assertion (d) implies that \MS$(x,i,c)$ is infeasible if the groups in $c^{-1}(1)$ cover elements in $c^{-1}(0)$ or $c^{-1}(1_?)$. 
That is, \MS$(x,i,c)$ is infeasible if $\bigcup \left(\mathfrak G \cap c^{-1}(1)\right) \not\subseteq [N] \cap c^{-1}(1)$.
To deal with this exceptional case we call $c$ \emph{consistent} if $\bigcup \left(\mathfrak G \cap c^{-1}(1)\right) \subseteq [N] \cap c^{-1}(1)$, and \emph{inconsistent} otherwise.
Note that consistency of $c$ is necessary to ensure feasibility of \MS$(x,i,c)$, but not sufficient.

Our algorithm processes the nodes of a nice tree decomposition in a bottom-up fashion.
Fix a node $x$, a number $i$ with $0 \le i \le G$ and a map $c:B_x\to \{0,1,1_?\}$.
We use dynamic programming to compute the value $\mbox{OPT}(x,i,c)$, assuming we know all possible values $\mbox{OPT}(y,j,c')$ for all children $y$ of $x$, all $j$ with $0 \le j \le G$, and all $c': B_y\to \{0,1,1_?\}$. 

In the following, for a subset $S\subset B_x$ the function $c|_S:S\to \{0,1,1_?\}$ is the restriction of $c$ to $S$. 
We use $\Gamma(v)$ to denote the neighborhood of $v$ in $B(\fG)$. 

If $c$ is not consistent, we may set $\mbox{OPT}(x,i,c) = - \infty$ right away.
We thus assume that $c$ is consistent and distinguish the type of node $x$ as follows.
\begin{itemize}
	\item \textit{Leaf:} set $\mbox{OPT}(x,0,c) = 0$ and  $\mbox{OPT}(x,i,c) = -\infty$ for all $i \in [G]$.
	\item \textit{Introduce:} let $y$ be the unique child of $x$ and let $v \notin B_y$ such that $B_x = B_y \cup \{v\}$.

If $v \in [N]$, we set
\begin{equation*}
\mbox{OPT}(x,i,c) =
\begin{cases}
\mbox{OPT}(y,i,c|_{B_y}), \text{ if } c(v)=0\\
\mbox{OPT}(y,i,c|_{B_y}) + w(v), \text{ if } c(v)=1 \text{ and } i > 0\\
\mbox{OPT}(y,i,c|_{B_y}) + w(v), \text{ if } c(v)=1_?\\
-\infty, \text{ otherwise}
\end{cases}
\end{equation*}

If $v \in \fG$, we set
\begin{equation}\label{eqn:join-eqn}
\mbox{OPT}(x,i,c) =
\begin{cases}
\mbox{OPT}(y,i,c|_{B_y}), \text{ if } c(v)=0\\
\max\{ \mbox{OPT}(y,i-1,c') : (y,c') \text{ is compatible to } (x,c) \}, \text{ if } c(v)=1 \text{ and } c^{-1}(1) \cap \Gamma(v) \neq \emptyset\\
-\infty, \text{ otherwise}
\end{cases}
\end{equation}
where $(y,c')$ is \emph{compatible} to $(x,c)$ if
\begin{itemize}
	\item $c':B_y \to \{0,1,1_?\}$ is a consistent colouring of $B_y$,
	\item $c^{-1}(0) = c'^{-1}(0)$, and
	\item $c^{-1}(1) = c'^{-1}(1) \cup (c'^{-1}(1_?) \cap \Gamma(v))$.
\end{itemize}
	\item \textit{Forget:} let $y$ be the unique child of $x$ and let $v \notin B_x$ such that $B_y = B_x \cup \{v\}$. We set
\begin{equation}\label{eqn:forget}
\mbox{OPT}(x,i,c) = \max\{\mbox{OPT}(y,i,c') : ~ c':B_y \to \{0,1,1_?\} \mbox{, } c = c'|_{B_x} \text{, and } c'(v) \neq 1_?\}
\end{equation}
	\item \textit{Join:} we set
\begin{multline}
\mbox{OPT}(x,i,c) = \max\{\mbox{OPT}(y,i_1,c') + \mbox{OPT}(z,i_2,c'') \\
- w(((B_x \cap [N]) \cup \bigcup (B_x \cap \fG)) \setminus c^{-1}(0)) : i_1 + i_2 - |c'^{-1}(1) \cap c''^{-1}(1) \cap \fG| = i\}, \label{eqn:join-node}
\end{multline}
where $y$ and $z$ are the two children of $x$. The maximum is taken over all consistent colourings $c',c'':B_x \to \{0,1,1_?\}$ with $c^{-1}(0)=c'^{-1}(0)=c''^{-1}(0)$ and $c^{-1}(1) = c'^{-1}(1) \cup c''^{-1}(1)$.
	\item \textit{Root:} first we compute $\mbox{OPT}(x,G,c)$ for all relevant choices of $c$, depending on the type of node $x$. The algorithm then terminates with the output 
	\[
	\mbox{OPT} = \max \left\{ \mbox{OPT}(x,G,c) : ~ c : B_x \to \{0,1,1_?\}\right\}.
	\]
\end{itemize}

\begin{lemma}\label{lem:correctness-dyn-prog}
The output OPT is the objective value of an optimal solution of the \MS~problem.
\end{lemma}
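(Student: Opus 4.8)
The plan is to prove, by induction that processes the nodes of the nice tree decomposition from the leaves up to the root, the stronger statement that for every node $x$, every $i$ with $0 \le i \le k$, and every colouring $c : B_x \to \{0,1,1_?\}$, the quantity $\mbox{OPT}(x,i,c)$ returned by the recurrences equals the true optimum of \MS$(x,i,c)$, with both sides read as $-\infty$ when the subproblem is infeasible. Granting this invariant, the Lemma is immediate from the Root step: the root $r$ is a leaf of $T$, so $B_r = \emptyset$, the only colouring is the empty one and no element is labelled $1_?$; moreover property (a) makes every group of $\fG$ eligible in the subtree rooted at $r$, which is all of $T$. Hence the feasible solutions of \MS$(r,k,\emptyset)$ are exactly the sets $\cS \subseteq \fG$ with $|\cS|=k$, with objective $w(\bigcup \cS)$, so $\mbox{OPT}(r,k,\emptyset)$ equals $\max\{ w(\bigcup \cS) : \cS \subseteq \fG,\ |\cS| = k \}$. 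Since the weights $w_i = x_i^2$ are nonnegative and selecting further groups cannot decrease the covered weight, this equals the optimal value of the \MS~problem in which at most $k$ groups may be chosen.

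For the induction I would, at each node type, exhibit a value-preserving correspondence between the feasible solutions of \MS$(x,i,c)$ and the feasible solutions of the child subproblem(s) singled out by the recurrence, checking both directions: every feasible $\cS$ at $x$ restricts to feasible child solution(s) of an admissible colouring, and conversely every admissible child solution (or compatible pair) extends to a feasible $\cS$ at $x$ with the same objective. Throughout I maintain the intended reading of the colours --- $c^{-1}(0)$ are the uncovered bag-elements and unselected bag-groups, $c^{-1}(1)$ the selected groups together with the elements already covered by a selected group seen in the subtree, and $c^{-1}(1_?)$ the elements optimistically assumed to be covered by a group introduced later --- and I verify that the objective $w(\bigcup \cS) + w(c^{-1}(1_?))$ is preserved. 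The Leaf and Forget nodes are direct from the definitions; the only point worth spelling out for Forget is that an element may not retain the label $1_?$ once it is forgotten, which is legitimate because the edge-in-a-bag property forces any group containing it to share a bag with it, so its coverage must already have been decided lower in the tree. The two Introduce cases use exactly two structural facts: a group introduced at $x$ occurs nowhere in the subtree below $x$ (connectivity), so selecting it is a genuinely new decision that is charged against the budget and that relabels its $1_?$-neighbours to $1$ (the \emph{compatible} colourings); and an element may be declared covered only when some selected group contains it, which is the role of the side condition $c^{-1}(1) \cap N(v) \neq \emptyset$.

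The step I expect to be the main obstacle is the Join node. Here a feasible $\cS$ must be split as $\cS = \cS_y \cup \cS_z$ according to the two subtrees, and two independent kinds of double counting have to be cancelled exactly. First, a group of $B_x$ selected on both sides lies in $\cS_y \cap \cS_z$ and is therefore counted twice in $i_1 + i_2$; using connectivity I would show that $\cS_y \cap \cS_z$ is precisely the set of selected bag-groups $c'^{-1}(1) \cap c''^{-1}(1) \cap \fG$, which is exactly the correction in the budget constraint. Second, and more delicately, the weight of a vertex may be credited in both branches: a bag-element that is covered on both sides, covered on one side and deferred as $1_?$ on the other, or deferred on both sides, contributes its weight to both $\mbox{OPT}(y,i_1,c')$ and $\mbox{OPT}(z,i_2,c'')$; likewise every element covered by a shared bag-group --- including those lying outside $B_x$ --- belongs to both $\bigcup \cS_y$ and $\bigcup \cS_z$. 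These are exactly the elements of $((B_x \cap E) \cup \bigcup (B_x \cap \fG)) \setminus c^{-1}(0)$, so subtracting $w$ of this set once restores the correct total $w(\bigcup \cS) + w(c^{-1}(1_?))$. The crux of the whole argument is to verify this identity by a case analysis over the labels in $\{0,1,1_?\}$ that each bag-vertex receives under $c'$ and $c''$, confirming that every vertex contributing to the subtracted set is overcounted by exactly one and that no other vertex is overcounted; the converse gluing direction then follows by reversing the split.
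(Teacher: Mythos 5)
Your plan is correct and follows essentially the same route as the paper's proof: a bottom-up induction over the nice tree decomposition establishing that $\mbox{OPT}(x,i,c)$ equals the true optimum of \MS$(x,i,c)$ for every node type, with the compatibility relabelling at introduce nodes and the two double-counting corrections (shared bag-groups in the budget, shared covered/deferred weight in the objective) at join nodes — which the paper likewise identifies as the delicate step. Your added justifications (why $1_?$ must be resolved before an element is forgotten, via the edge-in-a-bag and connectivity properties) are sound and, if anything, slightly more explicit than the paper's treatment of the forget case.
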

\begin{proof}
The proof follows the individual steps of the dynamic programming algorithm.
Consider the problem \MS$(x,i,c)$.
If $c$ is not consistent, we correctly set $\mbox{OPT}(x,i,c)=-\infty$.
We thus proceed to the case when $c$ is consistent.
Fix an optimal solution $\cS$ of \MS$(x,i,c)$ if existent.

The \textit{leaf}-node case is clear, so we proceed to the case of $x$ being an \textit{introduce}-node.
Let $y$ be the unique child of $x$ and let $v \notin B_y$ such that $B_x = B_y \cup \{v\}$.
First assume that $v \in [N]$.

Assume that $c(v)=0$.
Since $c$ is consistent, $c^{-1}(1) \cap \Gamma(v) = \emptyset$ holds, and so $\cS$ is an optimal solution to \MS$(y,i,c|_{B_y})$.
We may thus set $\mbox{OPT}(x,i,c)=\mbox{OPT}(y,i,c|_{B_y})$.

If $c(v)=1$, $\cS$ covers $v$, so we need to make sure some vertex $u \in \Gamma(v) \cap B_x$ is contained in the solution in order for \MS$(x,i,c)$ to be feasible.
Hence we have $\mbox{OPT}(x,i,c) = \mbox{OPT}(y,i,c|_{B_y}) + w(v)$ if $c^{-1}(1) \cap \Gamma(v) \neq \emptyset$, and $\mbox{OPT}(x,i,c)=-\infty$ otherwise.

Next we assume that $v \in \fG$.
If $c(v)=0$, we may simply put $\mbox{OPT}(x,i,c) = \mbox{OPT}(y,i,c|_{B_y})$.
So, assume that $c(v)=1$.
If \MS$(x,i,c)$ is feasible and thus $\cS$ exists, define $\cS' = \cS \setminus \{v\}$.
Now $\cS'$ is a solution to $\mbox{OPT}(y,i,c')$ for some $c' : B_y \to \{0,1,1_?\}$ with $c^{-1}(0) = c'^{-1}(0)$ and $c^{-1}(1) = c'^{-1}(1) \cup (c'^{-1}(1_?) \cap \Gamma(v))$.
Note that $(y,c')$ is compatible to $(x,c)$.
Consequently, $\mbox{OPT}(x,i,c)$ is upper bounded by the right hand side of \eqref{eqn:join-eqn}.

To see that $\mbox{OPT}(x,i,c)$ is at least the right hand side of \eqref{eqn:join-eqn}, let $(y,c'')$ be compatible to $(x,c)$ and let $\cS''$ be a solution to \MS$(y,i,c'')$ of objective value $\lambda\in \RR$.
Then $\cS'' \cup \{v\}$ is a solution to \MS$(x,i,c)$ of objective value $\lambda$.
Consequently, 
\[\mbox{OPT}(x,i,c) = \max \{ \mbox{OPT}(y,i,c') : (y,c') \text{ is compatible to } (x,c) \}.\] 

If $x$ is a \textit{forget}-node, let $y$ be the unique child of $x$ and let $v \notin B_x$ such that $B_y = B_x \cup \{v\}$.
If $v \in \cS$, we have
\[
\mbox{OPT}(x,i,c) \le \mbox{OPT}(y,i,c') \mbox{ where } c':B_y \to \{0,1,1_?\}, \ \ c = c'|_{B_x} \text{ and } c'(v) =1.
\]
Otherwise, if $v \notin \cS$, we have
\[
\mbox{OPT}(x,i,c) \le \mbox{OPT}(y,i,c') \mbox{ where } c':B_y \to \{0,1,1_?\}, \ \ c = c'|_{B_x} \text{ and } c'(v) =0.
\]
Moreover, any solution of \MS$(y,i,c')$, where $c':B_y \to \{0,1,1_?\}, \ \ c = c'|_{B_x}$ and $c'(v) =1$ is a solution of \MS$(x,i,c)$.
This proves \eqref{eqn:forget}.

If $x$ is a \textit{join}-node, let $y$ and $z$ be the two children of $x$ and recall that $B_x=B_y=B_z$.

Let $\cS'$ be the collection of groups in $\cS$ contained in the subtree rooted at $y$, and let $\cS''$ be the collection of groups in $\cS$ contained in the subtree rooted at $z$.
Since $T$ is a tree decomposition, $\cS' \cap \cS'' = \cS \cap B_x$.

Note that $\cS'$ is a solution of \MS$(y,i_1,c')$ and $\cS''$ is a solution of \MS$(z,i_1,c'')$ for some $c',c'':B_x \to \{0,1,1_?\}$ and $i_1,i_2$ with $i_1 + i_2 - |c'^{-1}(1) \cap c''^{-1}(1) \cap \fG| = i$. 
It is easy to see that $c^{-1}(0)=c'^{-1}(0)=c''^{-1}(0)$ and $c^{-1}(1) = c'^{-1}(1) \cup c''^{-1}(1)$.

The objective value of $\cS$ equals 
\[w(\bigcup \cS) + w(c^{-1}(1_?)) = w(\bigcup \cS') + c'^{-1}(1_?) + w(\bigcup \cS'') + c''^{-1}(1_?) - w(((B_x \cap [N]) \cup \bigcup (B_x \cap \fG)) \setminus c^{-1}(0)).\]
This shows that $\mbox{OPT}(x,i,c)$ is at most the right hand side of \eqref{eqn:join-node}.

Now let $\tilde \cS$ be an optimal solution of \MS$(y,j_1,\tilde c)$ and let $\hat \cS$ be an optimal solution of \MS$(z,j_2,\hat c)$ where 
\begin{itemize}
	\item $\tilde c, \hat c:B_x \to \{0,1,1_?\}$ are both consistent,
	\item $c^{-1}(0)=\tilde c^{-1}(0)=\hat c^{-1}(0)$ and $c^{-1}(1) = \tilde c^{-1}(1) \cup \hat c^{-1}(1)$, and
	\item $j_1 + j_2 - |\tilde c^{-1}(1) \cap \hat c^{-1}(1) \cap \fG| = i$.
\end{itemize}
Note that $\tilde \cS$ and $\hat \cS$ exist since, as we have shown earlier, the colourings $c'$ and $c''$ satisfy the above assertions.
Then $\hat \cS \cup \tilde \cS$ is a solution of \MS$(x,i,c)$ with objective value
$$\max \{\mbox{OPT}(y,j_1,\tilde c) + \mbox{OPT}(z,j_1,\hat c) - w(((B_x \cap [N]) \cup \bigcup (B_x \cap \fG)) \setminus c^{-1}(0))\}.$$
Consequently, $\mbox{OPT}(x,i,c)$ is at least the right hand side of \eqref{eqn:join-node} and thus \eqref{eqn:join-node} holds. \qed
\end{proof}

By storing the best current solution alongside the OPT$(x,i,c)$-values we can compute an optimal solution together with OPT.

\subsubsection{Runtime of the Algorithm}
The computational complexity of the individual steps are as follows.
\begin{enumerate}[(a)]
	\item Given the incidence graph $B(\fG)$ on $n=M+N$ vertices of treewidth $w_T$, one can compute a tree decomposition of width $w_T$ in time $\bigO(2^{\bigO(w_T^3)}n)$ using Bodlaender's algorithm~\cite{DBLP:journals/siamcomp/Bodlaender96}. The number of nodes of the decomposition is in $\bigO(n)$.
	\item Given a tree decomposition of width $w_T$ with $t$ nodes, one can compute a nice tree decomposition of width $w_T$ on $\bigO(w_Tt)$ nodes in $\bigO(w_T^2t)$ time in a straightforward way~\cite{DBLP:books/sp/Kloks94}. 
\end{enumerate}

The running time of the dynamic programming is bounded as follows.

\begin{theorem}\label{thm:runtime-dynprog}
The dynamic programming algorithm can be implemented to run in $\bigO(w_T 5^{w_T} G^2 N t)$ time given a nice tree decomposition of $B(\fG)$ of width $w_T$ on $t$ nodes.
\end{theorem}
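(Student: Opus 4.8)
The plan is to bound the total running time as the number $t$ of nodes of the nice tree decomposition times the maximum work performed at a single node, and to argue that the \textbf{Join} nodes dominate. First I would fix a node $x$ and observe that the subproblems attached to it are the pairs $(i,c)$ with $0\le i\le k$ and $c:B_x\to\{0,1,1_?\}$; since $|B_x|\le w+1$ there are at most $(k+1)\,3^{w+1}$ of them. For \textbf{Leaf} nodes each value is set in $\bigO(1)$ time. For \textbf{Forget} nodes each $\mbox{OPT}(x,i,c)$ is the maximum of two already-computed children values (the extensions $c'(v)\in\{0,1\}$), and for \textbf{Introduce} nodes it is a maximum over the colourings obtained by recolouring a subset of $c^{-1}(1)\cap N(v)$ from $1$ to $1_?$. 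Generously bounding each elementary step by the weight cost discussed below, a short count shows these contribute at most $\bigO(w\,3^{w}kN)$ per node, which is dominated by the join bound derived next.

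The heart of the argument is the \textbf{Join} node. Here $\mbox{OPT}(x,i,c)$ is the maximum, over consistent $c',c'':B_x\to\{0,1,1_?\}$ with $c^{-1}(0)=c'^{-1}(0)=c''^{-1}(0)$ and $c^{-1}(1)=c'^{-1}(1)\cup c''^{-1}(1)$ and over budget splits satisfying $i_1+i_2-|c'^{-1}(1)\cap c''^{-1}(1)\cap\fG|=i$, of the expression in \eqref{eqn:join-node}. Rather than fixing $c$ and searching for matching pairs, I would enumerate the \emph{triples} $(c,c',c'')$ directly, building them coordinate by coordinate. The two set-identities above impose a purely local constraint on each coordinate $(c(u),c'(u),c''(u))$, and a case check shows that exactly five combinations survive, namely $(0,0,0),(1,1,1),(1,1,1_?),(1,1_?,1),(1_?,1_?,1_?)$: if $c(u)=0$ then both are $0$; if $c(u)=1$ then neither is $0$ and at least one is $1$; and if $c(u)=1_?$ then both must be $1_?$. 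Hence there are at most $5^{|B_x|}\le 5^{w+1}$ valid triples, which is the source of the factor $5^{w}$. For each triple I would iterate over the $\bigO(k^2)$ choices of $(i_1,i_2)$, discard inconsistent colourings, compute the resulting index $i$ (the correction term is read off in $\bigO(w)$ time since $|B_x|\le w+1$), and relax $\mbox{OPT}(x,i,c)$ accordingly.

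It remains to account for the weight evaluations. Each update uses the correction weight $w\bigl(((B_x\cap E)\cup\bigcup(B_x\cap\fG))\setminus c^{-1}(0)\bigr)$; forming the union of the at most $w+1$ groups of the bag over the ground set $[N]$ and summing costs $\bigO(wN)$, and bounding every elementary step by this cost already suffices. Multiplying the $t$ nodes by the $5^{w+1}$ triples, the $\bigO(k^2)$ budget splits, and the $\bigO(wN)$ per step gives $\bigO(w\,5^{w}k^2Nt)$, as required. The main obstacle I anticipate is the local case analysis establishing that precisely five coordinate-combinations survive the two set-identities — this is what pins the base down to $5$ rather than a naive $9$ — together with verifying that the introduce and forget nodes, as well as the precomputation of the bag weights, stay within this bound rather than inflating it.
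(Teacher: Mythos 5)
Your proposal is correct and follows essentially the same route as the paper: both identify the join nodes as the bottleneck and obtain the base $5$ by enumerating, coordinate by coordinate, the admissible combinations of $(c(u),c'(u),c''(u))$ — your five triples $(0,0,0),(1,1,1),(1,1,1_?),(1,1_?,1),(1_?,1_?,1_?)$ are exactly the paper's $5^{w+1}$ colourings $C:B_x\to\{(0,0),(1,1),(1,1_?),(1_?,1),(1_?,1_?)\}$ — then multiplying by the $\bigO(k^2)$ budget splits and the $\bigO(wN)$ cost of the weight evaluation. Your explicit (if rough) accounting for the introduce and forget nodes only adds detail the paper leaves implicit.
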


Note that we can assume that $t = \bigO(w_T n)$ with $n=M+N$. Together with the running time of the construction of the nice tree decomposition we can solve the exact projection problem on graphs with treewidth $w_T$ in $\bigO((N+M)(w_T^2 5^{w_T} G^2 N + 2^{\bigO(w_T^3)}+w_T^2))$.

\begin{proof}[Proof of Theorem~\ref{thm:runtime-dynprog}]
Since the \textit{join}-nodes are clearly the bottleneck of the algorithm, we discuss how to organize the computation in a way that the desired running time bound of $\bigO(w_T 5^{w_T} G^2 N)$ holds in a node of this type.

So, let $x$ be a \textit{join}-node and assume that $y$ and $z$ are the two children of $x$.
We want to compute $\mbox{OPT}(x,i,c)$, for all colourings $c:B_x \to \{0,1,1_?\}$ and all $i$ with $0 \le i \le G$.
Recall that we need to compute this value according to \eqref{eqn:join-node}, that is,
\begin{align*}
\mbox{OPT}(x,i,c) = \max\{&\mbox{OPT}(y,i_1,c') + \mbox{OPT}(z,i_2,c'') \nonumber \\
&- w(((B_x \cap [N]) \cup \bigcup (B_x \cap \fG)) \setminus c^{-1}(0)) : i_1 + i_2 - |c'^{-1}(1) \cap c''^{-1}(1) \cap \fG| = i\},
\end{align*}
where the maximum is taken over all consistent colourings $c',c'':B_x \to \{0,1,1_?\}$ with $c^{-1}(0)=c'^{-1}(0)=c''^{-1}(0)$ and $c^{-1}(1) = c'^{-1}(1) \cup c''^{-1}(1)$.

We enumerate all $5^{w_T+1}$ colourings $C: B_x \to \{(0,0),(1,1),(1,1_?),(1_?,1),(1_?,1_?)\}$ and derive $c$, $c'$, and $c''$. 
We put
\begin{align*}
c(v) &=
\begin{cases}
0, \text{ if } C(v)=(0,0)\\
1, \text{ if } C(v) \in \{(1,1),(1,1_?),(1_?,1)\}\\
1_?, \text{ if } C(v)=(1_?,1_?)\\
\end{cases}\\
c'(v) &=
\begin{cases}
0, \text{ if } C(v)=(0,0)\\
1, \text{ if } C(v) \in \{(1,1),(1,1_?)\}\\
1_?, \text{ if } C(v) \in \{(1_?,1),(1_?,1_?)\}\\
\end{cases}\\
c''(v) &=
\begin{cases}
0, \text{ if } C(v)=(0,0)\\
1, \text{ if } C(v) \in \{(1,1),(1,1_?)\}\\
1_?, \text{ if } C(v) \in \{(1,1_?),(1_?,1_?)\}\\
\end{cases}
\end{align*}
If either of $c$, $c'$, or $c''$ are inconsistent, we discard this choice of $C$.
In this way we capture every consistent colouring $c:B_x \to \{0,1,1_?\}$, and all consistent choices of $c'$, and $c''$ satisfying $c^{-1}(0)=c'^{-1}(0)=c''^{-1}(0)$ and $c^{-1}(1) = c'^{-1}(1) \cup c''^{-1}(1)$.

It remains to discuss the computation of the value $w((c^{-1}(1) \cap [N]) \cup \bigcup (c^{-1}(1) \cap \fG)))$.
This value can be computed in $\bigO(w_TN)$ time, since we are computing differences and unions of at most $w_T$ groups of size $N$ each.
We arrive at a total running time in $\bigO(w_T 5^{w_T} G^2 N)$.
\qed
\end{proof}
\begin{remark}\label{rem:kplusG}
	The dynamic programming algorithm can be extended to include a sparsity restriction on the support of the signal approximation itself. So, we can compute an optimal $K$-sparse $G$-group-sparse signal approximation if the bipartite incidence graph of the studied group models is bounded. The running time of the algorithm increases by a factor of $\bigO(K)$.
\end{remark}

\subsection{Hardness on Grid-Like Group Structures}\label{hardness}

An $r \times r$\emph{-grid} is a graph with vertex set $[r]\times[r]$, and two vertices $(a,b),(c,d) \in [r]\times[r]$ are adjacent if and only if $|a-c|=1$ and $|b-d|=0$, or if $|a-c|=0$ and $|b-d|=1$. We also say that $r$ is the size of the grid. Fig.~\ref{fig:grid} shows a $6 \times 6$-grid.

\begin{figure}[h!]
	\centering
		\includegraphics[width=0.2\textwidth]{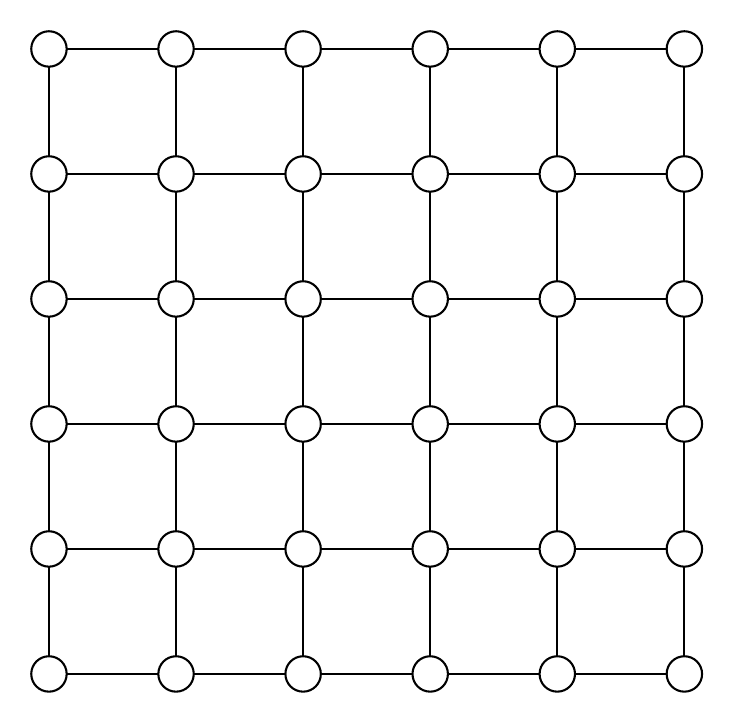}
		\caption{A $6 \times 6$-grid}
	\label{fig:grid}
\end{figure}

Recall the \MS~problem can be solved efficiently when the treewidth of the incidence graph of the group structure is bounded, as shown in Section~\ref{sec:treewidth}.

\begin{definition}[Graph minor]
Let $G_1$ and $G_2$ be two graphs.
The graph $G_2$ is called a \emph{minor} of $G_1$ if $G_2$ can be obtained from $G_1$ by deleting edges and/or vertices and by contracting edges. 
\end{definition}

A classical theorem by Robertson and Seymour~\cite{DBLP:journals/jct/RobertsonS86} says that in a graph class $\mathcal C$ closed under taking minors either the treewidth is bounded or $\mathcal C$ contains all grid graphs.

Consequently, if $\mathcal C$ is a class of graphs that does not have a bounded treewidth, it contains all grids.
Our next theorem shows that \MS~is NP-hard on group models $\fG$ for which $B(\fG)$ is a grid, thus complementing Theorem~\ref{thm:runtime-dynprog}.

\begin{theorem}\label{thm:hardness-on-grids}
The \MS~problem is NP-hard even if restricted to instances $\fG$ for which $B(\fG)$ is a grid graph and the weight of any element is either 0 or 1.
\end{theorem}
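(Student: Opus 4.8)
The plan is to reduce from a planar, bounded-degree version of \textsc{Vertex Cover}, which is well known to be \NP-hard (shown by Garey, Johnson and Stockmeyer), and to realise the instance inside a full grid so that the resulting incidence graph $B(\fG)$ is exactly an $r\times r$ grid. First I would reformulate the task: with $K=N$ and weights in $\{0,1\}$, solving \MS{} amounts to choosing at most $G$ groups so as to cover as many weight-$1$ elements as possible, i.e.\ a red--blue covering problem. Since a grid is connected and bipartite, its two colour classes are forced to coincide with the element side and the group side of $B(\fG)$; hence every cell of one parity is a group and every cell of the other parity is an element, and a group covers precisely its (at most four) grid-neighbours. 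Cells that do not participate in the encoding are neutralised by giving their element weight $0$, so that the groups touching only such cells are never worth selecting.

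The heart of the construction is a gadget layout on the grid. Using a polynomial-time orthogonal grid drawing of the planar input graph $H$ (vertices at grid points, edges routed as vertex-disjoint rectilinear paths), I would scale the drawing up by a constant factor so that each vertex, edge-route and bend has room for a gadget, and then embed everything into a single $r\times r$ grid with $r=\bigO(|V(H)|)$. A \emph{wire} representing an edge of $H$ is a grid path all of whose element-cells carry weight $1$; covering these forces a minimum cover of the path, which has two equal-cost \emph{phases} (selecting the alternate groups), so a wire transmits one bit, namely which of its two end-groups is selected. \emph{Turn} and \emph{junction} gadgets propagate this bit around corners and into the vertex gadgets while keeping the per-wire cost fixed.

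At each vertex $v$ of $H$ I would place a distinguished group $\cG_v$ whose selection encodes $v$ being in the cover, wired to the edge gadgets of all edges incident with $v$. The edge gadget of $uv$ contains a weight-$1$ demand element that can be covered only through the incident wires, arranged so that it is covered precisely when the phase forced by $\cG_u$ or by $\cG_v$ reaches it --- that is, iff $u$ or $v$ is selected. Setting the budget to $G=\Pi+t$, where $\Pi$ is the (fixed) number of groups needed to cover all forced wire and background weight-$1$ elements and $t$ is the target cover size, I would then prove the two implications: a vertex cover of $H$ of size $t$ yields a selection of $G$ groups covering every weight-$1$ element (route the corresponding phases), and conversely any selection of at most $G$ groups covering all weight-$1$ elements can be pushed, without increasing its size, to one in which every wire is in a clean phase and every $\cG_v$-choice is consistent, from which a vertex cover of size $\le t$ is read off.

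The main obstacle is the gadget engineering under the rigid constraints of a \emph{full} grid: the parity of a cell already fixes whether it is a group or an element, every group has at most four neighbours, and only $\{0,1\}$ weights are allowed. Within this straitjacket one must (i) design turn, junction and edge gadgets whose minimum covers are in cost-preserving bijection with the phase and consistency choices, (ii) guarantee that no weight-$1$ element is adjacent to a background group (otherwise it would be covered ``for free'' and break the correspondence), and (iii) carry out the exact budget accounting that separates the forced overhead $\Pi$ from the genuine choices. Proving that the phase covers are indeed optimal --- so that no cheaper, ``cheating'' cover exists --- is the delicate part; everything else is routine layout and bookkeeping, and it gives the desired complement to Theorem~\ref{thm:runtime-dynprog}.
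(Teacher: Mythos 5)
Your reduction source and layout strategy match the paper's: \textsc{Vertex Cover} on planar bounded-degree graphs, an orthogonal grid drawing scaled up to make room, the parity classes of the grid forced to be the group side and the element side of $B(\fG)$, and weight $0$ on all cells outside the drawing. But there is a genuine gap: everything that makes the reduction work --- the turn, junction and edge gadgets, the claim that their minimum covers are in cost-preserving bijection with phase choices, and above all the argument that no cheaper ``cheating'' cover exists --- is announced rather than constructed, and you say yourself that this is the delicate part. Two specific points would bite you if you tried to carry the plan out. First, a wire does \emph{not} automatically have ``two equal-cost phases'': a path of weight-$1$ elements between two vertex-groups corresponds, in the intersection graph, to the edge $uv$ subdivided some number of times, and whether the two endpoint-in/endpoint-out covers cost the same depends on the parity of that number. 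Second, your requirement (ii), that no weight-$1$ element be adjacent to a background group, is hard to arrange in a full grid and is in fact not needed.

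The paper closes both gaps without any gadget engineering. After embedding $G$ and subdividing every grid edge $9$ times, it observes (assertion (d)) that every background group-vertex has its neighbourhood contained in that of some embedded group-vertex, so an optimal selection may be assumed to use only embedded groups --- this replaces your isolation requirement by a domination/exchange argument. It then inserts a small ``bypass'' on each wire of the wrong parity so that every path between degree-$3$ vertices has length $2 \pmod 4$; with that parity, the restricted intersection graph is exactly $G$ with each edge subdivided an \emph{even} number of times, and covering all weight-$1$ elements with $k'$ groups is literally a vertex cover of size $k'$ of that subdivision. Lemma~\ref{lem:vc-contraction} (double subdivision increases $\tau$ by exactly one) then does all of your ``budget accounting'' $G=\Pi+t$ in one line. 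So the skeleton of your plan is sound, but the proof lives precisely in the steps you defer, and they are settled not by phase-propagation gadgets but by the subdivision--vertex-cover correspondence plus the parity fix.
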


Consider the following problem: given an $n \times n$-pixel black-and-white image, pick $k$ $2 \times 2$-pixel windows to cover as many black pixels as possible.
This problem can be modeled as the \MS~problem on a grid graph where the weight of any element is either 0 or 1. 
See Fig.~\ref{fig:pixels_to_grid} for an illustration.
Note that this group model is of frequency at most 4, and so we can do an approximate model projection and signal recovery using the result of Section~\ref{sec:low-freq}.

\begin{figure}[h!]
	\centering
		\includegraphics[width=0.5\textwidth]{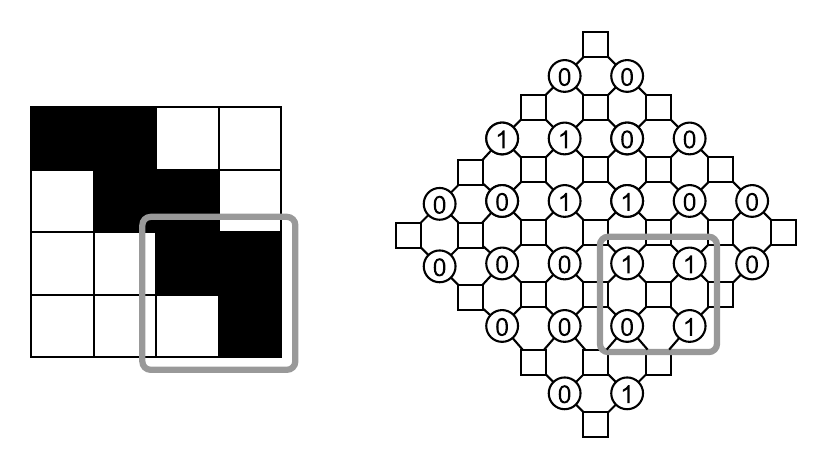}
		\caption{Selecting black pixels by 2x2 pixel windows / \MS~in a grid graph with binary weights.
		Squares are group-vertices, circles are element-vertices}
	\label{fig:pixels_to_grid}
\end{figure}

Our proof is a reduction from the \textsc{Vertex Cover} problem.
Recall that for a graph $\bar G$, a \emph{vertex cover} is a subset of the vertices of $\bar G$ such that any edge of $\bar G$ has at least one endpoint in this subset.
The size of the smallest vertex cover of $\bar G$, the \emph{vertex cover number}, is denoted $\tau(\bar G)$.

Given a graph $\bar G$ and a number $k$ as input, the task in the \textsc{Vertex Cover} problem is to decide whether $\bar G$ admits a vertex cover of size $k$. That is, whether $\tau(\bar G) \le k$.
This problem is NP-complete even if restricted to cubic planar graphs~\cite{DBLP:books/fm/GareyJ79}.\footnote{Recall that a graph is \emph{cubic} if every vertex is of degree $3$, and \emph{planar} if it can be drawn into the plane such that no two edges cross.}

We use the following simple lemma in our proof.

\begin{lemma}\label{lem:vc-contraction}
Let $\bar G$ be a graph and let $G'$ be the graph obtained by subdividing some edge of $\bar G$ twice.
Then $\tau(\bar G) = \tau(G')-1$.
\end{lemma}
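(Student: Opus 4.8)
The plan is to prove that subdividing a single edge of $G$ twice increases the vertex cover number by exactly one. Let $e=uv$ be the edge of $G$ that we subdivide twice, introducing two new vertices $a$ and $b$ so that in $G'$ the edge $uv$ is replaced by the path $u-a-b-v$. The two inequalities $\tau(G')\le\tau(G)+1$ and $\tau(G')\ge\tau(G)+1$ will be established separately by constructing explicit vertex covers.

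\medskip
\noindent\textbf{Upper bound.} First I would show $\tau(G')\le\tau(G)+1$. Take a minimum vertex cover $C$ of $G$, so $|C|=\tau(G)$. Since $C$ covers the edge $uv$ in $G$, at least one of $u,v$ lies in $C$; without loss of generality assume $u\in C$. I claim $C\cup\{b\}$ is a vertex cover of $G'$. Every edge of $G'$ that also appears in $G$ (that is, every edge other than $ua$, $ab$, $bv$) is covered by $C$ exactly as before. The new edge $ua$ is covered since $u\in C$; the edge $ab$ is covered since $b$ is added; and $bv$ is covered since $b$ is added. Hence $C\cup\{b\}$ is a vertex cover of $G'$ of size $\tau(G)+1$, giving $\tau(G')\le\tau(G)+1$.

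\medskip
\noindent\textbf{Lower bound.} Next I would show $\tau(G')\ge\tau(G)+1$, equivalently $\tau(G)\le\tau(G')-1$. Let $C'$ be a minimum vertex cover of $G'$. The path $u-a-b-v$ contains the three edges $ua,ab,bv$; covering these three edges requires at least two vertices among $\{u,a,b,v\}$, and in fact any vertex cover must pick at least one of $\{a,b\}$ to cover the middle edge $ab$. The key observation is that I can modify $C'$, without increasing its size, so that it contains exactly one of $a,b$ together with at least one of $u,v$ on the path; a short case analysis on which of $u,a,b,v$ lie in $C'$ shows that the restriction of (the modified) $C'$ to $V(G)$, after deleting $a,b$, covers all edges of $G$ including $uv$, while we have ``spent'' exactly one extra vertex on the internal path vertices. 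This yields a vertex cover of $G$ of size at most $|C'|-1=\tau(G')-1$, so $\tau(G)\le\tau(G')-1$.

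\medskip
\noindent Combining the two bounds gives $\tau(G')=\tau(G)+1$, i.e.\ $\tau(G)=\tau(G')-1$, as claimed. The main obstacle is the lower bound: I must argue carefully that an arbitrary minimum cover $C'$ of $G'$ can be normalized so that it uses precisely one of the two subdivision vertices while still forcing a cover of the original edge $uv$. The cleanest way is an exchange argument ruling out the wasteful case where $C'$ contains both $a$ and $b$ but neither $u$ nor $v$ (in which case swapping $a$ for $u$ keeps the size and covers $uv$), so that deleting the subdivision vertices always leaves a valid cover of $G$ one vertex smaller.
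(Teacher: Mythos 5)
Your proposal is correct. Note that the paper itself states Lemma~\ref{lem:vc-contraction} without proof, treating it as folklore, so there is no in-paper argument to compare against; your two-inequality proof is the standard one and genuinely fills that gap. The upper bound $\tau(G')\le\tau(G)+1$ is clean. For the lower bound, the essential point is exactly the one you isolate: if a minimum cover $C'$ of $G'$ contains only one of $a,b$, then the adjacent endpoint ($v$ if $b\notin C'$, $u$ if $a\notin C'$) is forced into $C'$ to cover $bv$ (resp.\ $ua$), so deleting the single subdivision vertex leaves a cover of $G$ of size $|C'|-1$; if $C'$ contains both $a$ and $b$ but neither $u$ nor $v$, your exchange of $a$ for $u$ preserves size and reduces to the previous situation; and if it contains both $a,b$ and one of $u,v$, deleting $a$ and $b$ outright already gives a cover of $G$ of size $|C'|-2$. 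One small remark: your normalized form ``exactly one of $a,b$'' is not needed in that last case, since the direct deletion argument suffices there — but this is a matter of presentation, not a gap.
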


We can now prove our theorem.

\begin{proof}[Proof of Theorem~\ref{thm:hardness-on-grids}]
The reduction is from \textsc{Vertex Cover} on planar cubic graphs.
Consider $\bar G=(V,E)$ to be a planar cubic graph and let $k$ be some number.
Our aim is to compute an instance $(\fG,\w,k')$ of the \MS~problem where $B(\fG)$ is a grid such that $\bar G$ has a vertex cover of size $k$ if and only if $\fG$ admits a selection of $k'$ groups that together cover elements of a total weight at least some threshold $t$.

First we embed the graph $\bar G$ in some grid $H$ of polynomial size, meaning the vertices of $\bar G$ get mapped to the vertices of the grid and edges get mapped to mutually internally disjoint paths in the grid connecting its endvertices.
This can be done in polynomial time using an algorithm for orthogonal planar embedding~\cite{DBLP:conf/latin/AlamBKKKW14}.
We denote the mapping by $\pi$, hence $\pi(u)$ is some vertex of $H$ and $\pi(vw)$ is a path from $\pi(v)$ to $\pi(w)$ in $H$ for all $u \in V$ and $vw \in E$.

Next we subdivide each edge of the grid 9 times, so that a vertical/horizontal edge of $H$ becomes a vertical/horizontal path of length 10 in some new, larger grid $H'$.
We choose $H'$ such that the corners of $H$ are mapped to the corners of $H'$.
In particular, $|V(H')| \le 100 |V(H)|$.
Let us denote the obtained embedded subdivision of $\bar G$ in $H'$ by $G'$, and let $\pi'$ denote the embedding.
Moreover, let $\phi$ be the corresponding embedding of $H$ into the subdivided grid $H'$.
Note that $\mbox{im}~\pi'|_{V} \subseteq \mbox{im}~\phi|_{V}$.

Let $(A,B)$ be a bipartition of $H'$.
We may assume that $\pi'(u)$ is in $A$ for all $u \in V$.
We consider $H'$ to be the incidence graph $B(\fG)$ of a group model $\fG$ where the vertices in $B$ correspond to the elements and the vertices in $A$ correspond to the groups of $\fG$.
We refer to the vertices in $A$ as \emph{group-vertices} and to vertices in $B$ as \emph{element-vertices}.
Slightly abusing notation, we identify each group with its group-vertex and each element with its element-vertex and write $\fG=A$.

We observe that
\begin{enumerate}[(a)]
	\item $G'$ is an induced subgraph of $H'$,
	\item every vertex $\pi'(u)$, $u \in V$, has degree 3 in $G'$ and is a group-vertex,
	\item every other vertex has degree 2 in $G'$, and
	\item for every group-vertex $x \in V(H') \setminus V(G')$ there is some group-vertex $u \in V(G')$ with 
	$$\Gamma_{H'}(x) \cap V(G') \subseteq \Gamma_{H'}(u) \cap V(G').$$
\end{enumerate}

Next we will tweak the embedding of $\bar G$ a bit, to get rid of paths $\pi(uv)$ with the wrong parity.
We do so in a way that preserves the properties (a)-(d).
Let $\mathcal P_0 \subseteq \{\pi'(uv) : uv \in E(H)\}$ be the set of all paths with a length 0 (mod 4), and let $\mathcal P_2 = \{\pi'(uv) : uv \in E(H)\} \setminus \mathcal P_0$.
We want to substitute each path in $\mathcal P_0$ by a path of length 2 (mod 4).
For this, let $u'$ be the neighbour of $u$ in the path $\pi(uv)$.
Note that the path $\pi'(uu')$ in $H'$ starts with a vertical or horizontal path $P$ from $\pi'(u)$ to $\pi'(u')$ of length 10.
We bypass the middle vertex of this path (an element-vertex) by going over two new element-vertices and one group-vertex instead. See Fig.~\ref{fig:grid-bypass} for an illustration.

To keep the notation easy we denote the newly obtained path by $\pi''(uv)$.
Note that, after adding the bypass, the new path $\pi''(uv)$ is two edges longer and thus has length 2 (mod 4).
We complete $\pi''$ to an embedding of $\bar G$ by putting $\pi''(u) = \pi'(u)$ and $\pi''(vu') = \pi'(vu')$ for all $u \in V$ and $vu' \in E$ with $\pi'(vu') \in \mathcal P_2$.
Moreover, let us denote the changed embedding of $\bar G$ by $G''$.

\begin{figure}[h!]
	\centering
		\includegraphics[width=0.4\textwidth]{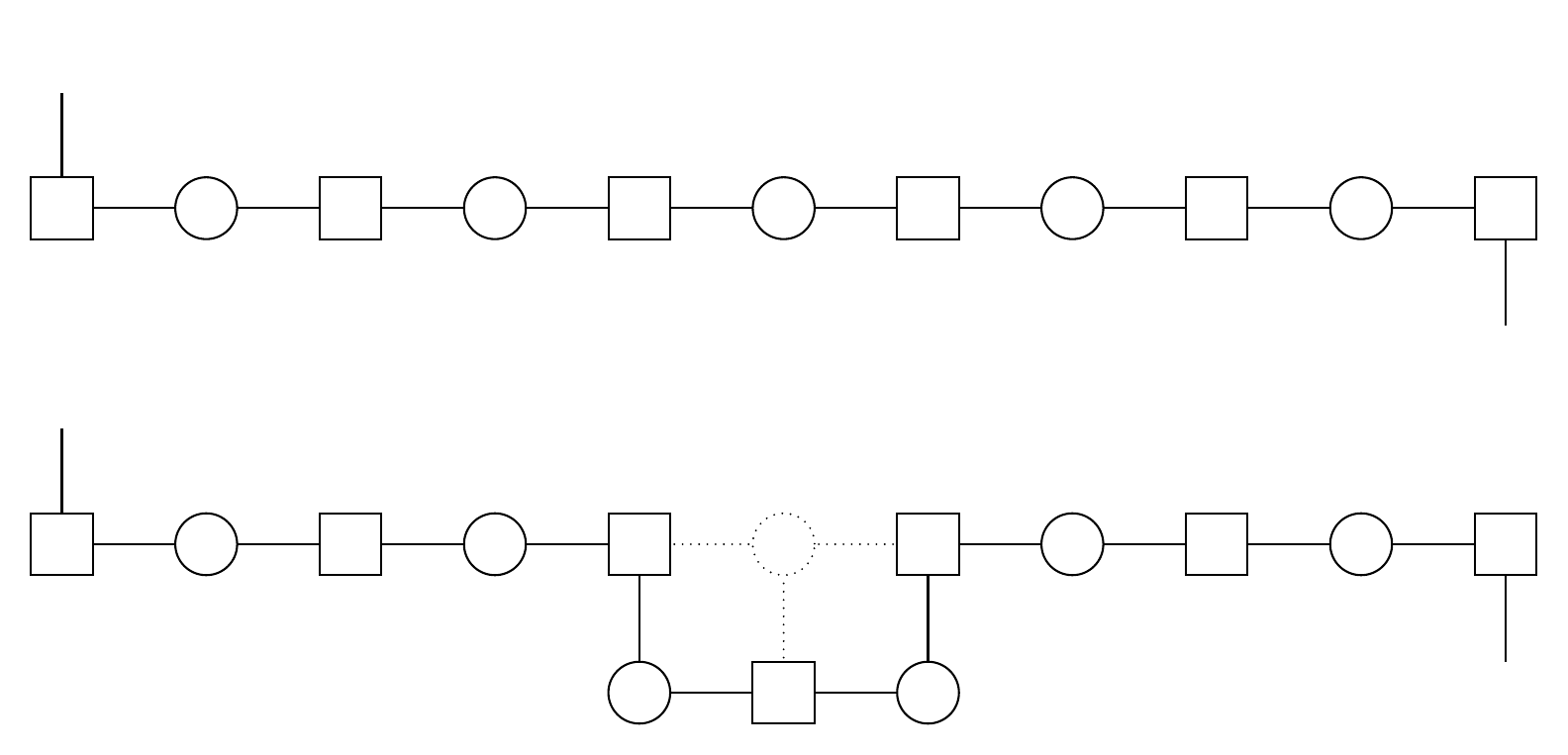}
		\caption{Introducing a bypass. Squares are group-vertices, circles are element-vertices}
	\label{fig:grid-bypass}
\end{figure}

We observe that the new embedding $G''$ still satisfies the assertions (a)-(d) and, in addition, it holds that
\begin{enumerate}[(e)]
	\item every path connecting two vertices of degree 3 over vertices of degree 2 only has length 2 (mod 4).
\end{enumerate}
Next we define the weights of the element-vertices by putting
\[
\w(x) = 
\begin{cases}
1, & x \in V(G'') \\
0, & x \in V(H') \setminus V(G'')
\end{cases} ~~~ \mbox{ for each element-vertex $x$ of $H'$.}
\]
Assertion (d) implies that, for any subset $\mathcal S \subseteq \fG$ of size $k$ there is an $\mathcal S' \subseteq \fG$ of size at most $k$ such that 
\begin{itemize}
	\item $\cS' \subseteq A \cap V(G'')$, and
	\item $\w(\bigcup \cS') \ge \w(\bigcup \cS)$.
\end{itemize}
Since $\w(u)=0$ for all elements in $B \setminus V(G'')$, we may thus restrict our attention to the restricted group model $\fG'= A \cap V(G'')$ on the element set $B\cap V(G'')$.

Slightly abusing notation, any subset $\cS \subseteq \fG'$ is a vertex subset of $I(\fG')$ and $\w(\bigcup \cS)$ equals the number of edges of $I(\fG')$ adjacent to the vertex set $\mathcal S$ in $I(\fG')$.
Moreover, the graph $I(\fG')$ is obtained from the graph $\bar G$ by subdividing each edge an even number of times.

From Lemma~\ref{lem:vc-contraction} we know that there is some number $t$ such that $\tau(\bar G) = \tau(I(\fG))-t$.
Hence, $\bar G$ has a vertex cover of size $k$ if and only if $M'$ has a cover of size $k'=k+t$ of total weight $|E(I(\fG'))|$.
This, in turn, is the case if and only if $M$ admits a cover of size $k'$ of total weight $|E(I(\fG'))|$. 
Since the construction of $\fG$ can be done in polynomial time, the proof is complete. \qed
\end{proof}

\subsection{MEIHT for General Group Structures} \label{meiht_new}
In this section we apply the results for structured sparsity models and for expander matrices to the group model case. The Model-Expander IHT (MEIHT) algorithm is one of the exact projection algorithms with provable guarantees for tree-sparse and loopless overlapping group-sparse models using model-expander sensing matrices \cite{bah2014model}. In this work we show how to use the MEIHT for more general group structures. The only modification of the MEIHT algorithm is the projection on these new group structures. We show MEIHT's guaranteed convergence and polynomial runtime.

\begin{algorithm}\caption{(MEIHT)}\label{alg:meiht}
	\inp $\A$, $\obs$, $\noise$ with $\A\x=\obs+\noise$, $G$, and $\fG$\\
	\out $\text{MEIHT}(\obs,\A,\fG,G)$, a $\fG_G$-sparse approximation to $\x$
	\begin{algorithmic}[1]
		\State $\x^{(0)} \gets \boldsymbol{0}$; \ $n\gets 0$
		\While {halting criterion is false}   
		\State $\x^{(n+1)} \gets \mathcal{P}_{\fG_G}\left[\x^{(n)} + \med(\obs-\A\x^{(n)})\right]$
		\State $n\gets n+1$
		\EndWhile
		\State Return: $\text{MEIHT}(\obs,\A,\fG,G) \gets \x^{(n)}$
	\end{algorithmic}
\end{algorithm}

Note that as in \cite{baldassarre2016group}, we are able to do model projections with an additional sparsity constraint, i.e. projection onto $\mathcal{P}_{\fG_{G,K}}$ defined in \eqref{eqn:GK-sparse-model}. Therefore Algorithm \ref{alg:meiht} works with an extra input $K$ and the model projection $\mathcal{P}_{\fG_{G}}$ becomes $\mathcal{P}_{\fG_{G,K}}$, retuning a $\fG_{G,K}$-sparse approximation to $\x$.

The convergence analysis of MEIHT with the more general group structures considered here remains the same as for loopless overlapping group models discussed in \cite{bah2014model}. We are able to perform the exact projection of $\mathcal{P}_{\fG_G}$ (and  $\mathcal{P}_{\fG_{G,K}}$) as discussed in Section \ref{sec:exact}. With the possibility of doing the projection onto the model, we present the convergence results in Corollaries \ref{cor:convergence0} and \ref{cor:convergence1} as corollaries to Theorem 3.1 and Corollary 3.1 in \cite{bah2014model} respectively.

\begin{corollary}\label{cor:convergence0}
	Consider $\fG$ to be a group model of bounded treewidth and $\support$ to be $\fG_G$-sparse. Let the matrix $\A\in \{0,1\}^{m\times N}$ be a model expander matrix with $\e_{\fG_{3G}} < 1/12$ and $d$ ones per column. For any $\x\in \RR^N$ and $\noise\in \RR^m$, the sequence of updates $\x^{(n)}$ of MEIHT with $\obs = \A\x + \noise$ satisfies, for any $n\geq 0$ 
	\begin{equation}
	\label{eqn:MEIHT_error}
		\|\x^{(n)} - \x_{\support}\|_1 \leq \alpha^n \|\x^{(0)} - \x_{\support}\|_1 + \left(1-\alpha^n\right)\beta\|\A\x_{\support^c} + \noise\|_1,
	\end{equation}
	where $\alpha = 8\e_{\fG_{3G}}\left(1-4\e_{\fG_{3G}}\right)^{-1} \in (0,1)$ and $\beta = 4d^{-1}\left(1 - 12\e_{\fG_{3G}}\right)^{-1} \in (0,1)$.
\end{corollary}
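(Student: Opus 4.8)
The plan is to treat Theorem 3.1 of \cite{bah2014model} as a black box and to verify that its two genuine hypotheses are satisfied for group models of bounded treewidth, so that its conclusion transfers verbatim. The key conceptual point is that the convergence analysis of MEIHT in \cite{bah2014model} never uses the loopless structure of the intersection graph directly; that assumption enters \emph{only} to guarantee that the exact model projection $\mathcal{P}_{\fG_G}$ is computable in polynomial time. The analysis itself rests on exactly two ingredients: (i) that at each iteration $\x^{(n+1)}$ is a \emph{genuine} optimal projection of $\x^{(n)}+\med(\obs-\A\x^{(n)})$ onto $\fG_G$, so that $\|\bv-\x^{(n+1)}\|_1 \le \|\bv-\bv_{\support}\|_1$ holds against the true support $\support\in\fG_G$; and (ii) that $\A$ satisfies the Model-RIP-$1$ of order $3G$, encoded by $\e_{\fG_{3G}}<1/12$, which for model expander matrices is supplied by Lemma~\ref{lem:rip1}.

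First I would record that both ingredients hold in our setting. For (i), Lemma~\ref{lem:correctness-dyn-prog} together with Theorem~\ref{thm:runtime-dynprog} shows that for a group model whose incidence graph $B(\fG)$ has bounded treewidth -- in particular when $I(\fG)$ has bounded treewidth, via Proposition~\ref{prop:I(M)-versus-B(M)} -- the dynamic program returns an exact optimizer of the \MS{} problem, i.e.\ a true projection onto $\fG_G$ (and the remark after Theorem~\ref{thm:runtime-dynprog} recovers the optimizing support itself). For (ii), the Model-RIP-$1$ is a property of the sensing matrix alone and is insensitive to the combinatorial structure of the groups, so the expander estimate carries over unchanged to arbitrary group models of order $3G$. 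Likewise, the median-operator bounds used in the $\ell_1$ analysis depend only on $\A$, not on $\fG$.

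With these in place I would transcribe the inductive estimate of \cite{bah2014model}. Writing $\bv = \x^{(n)}+\med(\obs-\A\x^{(n)})$ and using optimality of the projection against the $\fG_G$-sparse vector $\x_{\support}$, one bounds $\|\x^{(n+1)}-\x_{\support}\|_1$ by a constant multiple of $\|\bv-\x_{\support}\|_1$; the median estimates for expander matrices and the Model-RIP-$1$ of order $3G$ then convert this into the one-step contraction
\[
\|\x^{(n+1)}-\x_{\support}\|_1 \le \alpha\,\|\x^{(n)}-\x_{\support}\|_1 + (1-\alpha)\,\beta\,\|\A\x_{\support^c}+\noise\|_1,
\]
with $\alpha = 8\e_{\fG_{3G}}(1-4\e_{\fG_{3G}})^{-1}$ and $\beta = 4d^{-1}(1-12\e_{\fG_{3G}})^{-1}$. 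The condition $\e_{\fG_{3G}}<1/12$ is exactly what yields $8\e_{\fG_{3G}}<1-4\e_{\fG_{3G}}$, hence $\alpha\in(0,1)$, and keeps the denominator of $\beta$ positive. Unrolling the recursion and summing the geometric series in $\alpha$ gives the claimed bound for every $n\ge 0$.

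The main obstacle is one of due diligence rather than genuine difficulty: I must check that every step of the original proof is truly structure-agnostic, appealing to $\fG$ only through the projection oracle and the constant $\e_{\fG_{3G}}$. Concretely, I would confirm that the order $3G$ is the correct combined sparsity arising from $\x^{(n)}$, $\x^{(n+1)}$, and $\x_{\support}$ (each $\fG_G$-sparse, so their supports lie in $\fG_{3G}$), and that replacing the loopless dynamic program by the bounded-treewidth dynamic program of Section~\ref{sec:treewidth} leaves every constant in the recursion untouched. Once this bookkeeping is verified, the corollary follows immediately from Theorem 3.1 of \cite{bah2014model}.
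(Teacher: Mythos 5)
Your proposal is correct and follows essentially the same route as the paper, which simply states that the proof is analogous to Theorem 3.1 of the cited MEIHT work and omits it: the convergence analysis there depends on the group structure only through the exact projection oracle and the Model-RIP-1 constant $\e_{\fG_{3G}}$, both of which are available in the bounded-treewidth setting. Your version merely spells out the due-diligence checks that the paper leaves implicit.
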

Note that $\e_{\fG_{3G}}$ is the expansion coefficient of the underlying $(s,d,\e_{\fG_{3G}})$-model expander graph for $\A$. This ensures that $\A$ satisfied model RIP-1 over all $\fG_{3G}$-sparse signals.

The proof of this Corollary can be done analogously to the proof of Theorem 3.1 in \cite{bah2014model}. It is thus skipped and the interested reader is referred to \cite{bah2014model}.
 
Let us define the $\ell_1$-error of the best $\fG_G$-term approximation to a vector $\x\in\RR^N$ 
\begin{equation}
	\sigma_{\fG_G}(\x)_1 = \min_{{\fG_G}\mbox{-sparse} ~\z}\|\x-\z\|_1.
\end{equation}
This is then used in the following corollary.

\begin{corollary}\label{cor:convergence1}
	Consider the setting of Corollary \ref{cor:convergence0}. After $n = \left\lceil\log\left(\frac{\|\x\|_1}{\|\noise\|_1}\right) / \log \left(\frac{1}{\alpha}\right) \right\rceil$ iterations, MEIHT returns a solution $\sol$ satisfying
	\begin{equation}
	\label{eqn:lim_MEIHT_error}
		\|\sol - \x\|_1 \leq c_1 \sigma_{\fG_G}(\x)_1 + c_2 \|\noise\|_1
	\end{equation}
	where $c_1 = \beta d$ and $c_2 = 1+\beta$. 
\end{corollary}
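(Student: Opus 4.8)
The plan is to derive Corollary~\ref{cor:convergence1} directly from the per-iteration error bound~\eqref{eqn:MEIHT_error} established in Corollary~\ref{cor:convergence0}, mimicking the argument used in \cite{bah2014model} for the loopless case. First I would instantiate~\eqref{eqn:MEIHT_error} with the initial guess $\x^{(0)} = \0$, which the MEIHT algorithm uses. This collapses the first term $\alpha^n\|\x^{(0)} - \x_\support\|_1$ into $\alpha^n\|\x_\support\|_1$, and since $0 < \alpha < 1$ the factor $(1-\alpha^n)$ in the second term can be bounded above by $1$, yielding
\begin{equation*}
\|\x^{(n)} - \x_\support\|_1 \le \alpha^n \|\x_\support\|_1 + \beta \|\A\x_{\support^c} + \noise\|_1.
\end{equation*}

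Next I would choose the number of iterations $n$ so that the geometrically decaying term is negligible. The stated choice $n = \lceil \log(\|\x\|_1/\|\noise\|_1)/\log(1/\alpha)\rceil$ is precisely the value that forces $\alpha^n \|\x\|_1 \le \|\noise\|_1$; since $\|\x_\support\|_1 \le \|\x\|_1$, this makes the first term at most $\|\noise\|_1$. The remaining work is to control $\|\A\x_{\support^c} + \noise\|_1$ and to pass from the estimate on $\|\x^{(n)} - \x_\support\|_1$ (error relative to the \emph{restricted} signal $\x_\support$) to the desired estimate on $\|\sol - \x\|_1$ (error relative to the \emph{full} signal). For the latter, I would use the triangle inequality $\|\sol - \x\|_1 \le \|\sol - \x_\support\|_1 + \|\x - \x_\support\|_1 = \|\sol - \x_\support\|_1 + \|\x_{\support^c}\|_1$, and then identify $\support$ with the support of the best $\fG_G$-term approximation, so that $\|\x_{\support^c}\|_1 = \sigma_{\fG_G}(\x)_1$.

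For the term $\|\A\x_{\support^c} + \noise\|_1$, I would apply the triangle inequality to split it as $\|\A\x_{\support^c}\|_1 + \|\noise\|_1$, and then invoke the RIP-1 upper bound from Lemma~\ref{lem:rip1} (which holds for model expander matrices), giving $\|\A\x_{\support^c}\|_1 \le d\|\x_{\support^c}\|_1 = d\,\sigma_{\fG_G}(\x)_1$. Collecting everything, the coefficient of $\sigma_{\fG_G}(\x)_1$ becomes $\beta d + 1$ from the tail and covering contributions (absorbing the extra $\|\x_{\support^c}\|_1$ from the triangle inequality), matching $c_1 = \beta d$ after the appropriate bookkeeping, and the coefficient of $\|\noise\|_1$ becomes $1 + \beta$, matching $c_2$.

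The main obstacle I anticipate is the careful bookkeeping in the last step: getting the constants $c_1 = \beta d$ and $c_2 = 1 + \beta$ exactly right requires tracking which $\sigma_{\fG_G}(\x)_1$ contributions come from the triangle-inequality passage to the full signal versus the RIP-1 bound on $\A\x_{\support^c}$, and confirming that $\x_{\support^c} = \x - \x_\support$ is itself $\fG_G$-sparse or otherwise controlled so that Lemma~\ref{lem:rip1} applies legitimately. Since this is a near-verbatim adaptation of Corollary~3.1 in \cite{bah2014model}, and the generalization to bounded-treewidth group models only affects the projection oracle (not the convergence analysis), I expect no genuinely new difficulty — the proof should reduce to citing the original argument once the projection is established, exactly as the corollary is framed.
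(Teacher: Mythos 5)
Your proposal follows the paper's own proof essentially verbatim: initialize at $\x^{(0)}=\0$, bound $1-\alpha^n$ by $1$, use the column-count bound $\|\A\x_{\support^c}\|_1 \le d\|\x_{\support^c}\|_1$ (which holds for any vector since $\A$ is binary with $d$ ones per column, so your worry about whether Lemma~\ref{lem:rip1} applies to $\x_{\support^c}$ is moot), choose $n$ so that $\alpha^n\|\x\|_1 \le \|\noise\|_1$, and identify $\|\x_{\support^c}\|_1$ with $\sigma_{\fG_G}(\x)_1$ by taking $\support$ to be the support of the best $\fG_G$-term approximation. The one bookkeeping wrinkle you flag --- that passing from $\|\x^{(n)}-\x_{\support}\|_1$ to $\|\x^{(n)}-\x\|_1$ via the triangle inequality contributes an extra additive $\|\x_{\support^c}\|_1$, which would make the constant $\beta d + 1$ rather than $\beta d$ --- is silently absorbed by the paper under ``some algebraic manipulations,'' so it is a legitimate observation about the stated constant but does not distinguish your argument from theirs.
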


\begin{proof}


Without loss of generality we initialize MEIHT with $\x^{(0)} = 0$. Upper bounding $1-\alpha^n$ by $1$ and using triangle inequalities with some algebraic manipulations, \eqref{eqn:MEIHT_error} simplifies to
\begin{equation}
	\label{eqn:MEIHT_error1}
	\|\x^{(n)} - \x\|_1 \leq \alpha^n \|\x\|_1 + \beta\|\A\x_{\support^c}\|_1 + \beta\|\noise\|_1.
\end{equation}
Using the fact that $\A$ is a binary matrix with $d$ ones per column we have $\|\A\x_{\support^c}\|_1 \leq d\|\x_{\support^c}\|_1$. We also have $\|\noise\|_1 \geq \alpha^n \|\x\|_1$ when  $n \geq \log\left(\frac{\|\x\|_1}{\|\noise\|_1}\right) / \log \left(\frac{1}{\alpha}\right)$. Applying these bounds to \eqref{eqn:MEIHT_error1} leads to
\begin{equation}
\label{eqn:MEIHT_error2}
\|\x^{(n)} - \x\|_1 \leq \beta d\|\x_{\support^c}\|_1 + \left(1+\beta\right)\|\noise\|_1
\end{equation}
for  $n = \left\lceil\log\left(\frac{\|\x\|_1}{\|\noise\|_1}\right) / \log \left(\frac{1}{\alpha}\right) \right\rceil$. This is equivalent to \eqref{eqn:lim_MEIHT_error} with $c_1 = \beta d$, $c_2 = 1+\beta$, $\x^{(n)} = \sol$ for the given $n$, and $\sigma_{\fG_G}(\x)_1 = \|\x_{\support^c}\|_1$ because $\x_{\support}$ is the best $\fG_G$-term approximation to the $\fG_G$-sparse $\x$. This completes the proof.\qed
\end{proof}

The runtime complexity of MEIHT still depends on the median operation and the complexity of the projection onto the model. However, as observed in \cite{bah2014model}, the projection onto the model is the dominant operation of the algorithm. Therefore, the complexity of MEIHT is of the order of the complexity of the projection onto the model. In the case of overlapping group models with bounded treewidth, MEIHT achieves a polynomial runtime complexity as shown in Proposition \ref{pro:meiht_time} below. On the other hand, when the treewidth of the group model is unbounded MEIHT can still be implemented by using the Bender's decomposition procedure in Section \ref{sec:exact} for the projection which may have an exponential runtime complexity.

\begin{proposition}\label{pro:meiht_time}
	The runtime of MEIHT is $\bigO((N+M)(w_T^2 5^{w_T} G^2 N) \bar n + (N+M)(2^{\bigO(w_T^3)}+w_T^2))$ for the $\fG_G$-group-sparse model with bounded treewidth $w_T$, 
where $\bar n$ is the number of iterations, $M$ is the number of groups, $G$ is the group budget and $N$ is the signal dimension. 
\end{proposition}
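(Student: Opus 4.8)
The plan is to bound the runtime of MEIHT by multiplying the per-iteration cost by the number of iterations $n$, exactly along the lines indicated in the paragraph preceding the proposition. As argued there (following \cite{bah2014model}), the dominant operation in each iteration of MEIHT is the exact model projection $\mathcal P_{\fG_G}$, which we perform via the dynamic programming algorithm of Section~\ref{sec:treewidth}. The median operation $\med(\obs - \A\x^{(n)})$ and the matrix-vector product $\A\x^{(n)}$ contribute only lower-order terms and are therefore absorbed into the projection cost, so the analysis reduces to counting the cost of one call to the projection oracle and then scaling by $n$.

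First I would invoke Theorem~\ref{thm:runtime-dynprog}, which states that the dynamic programming algorithm runs in $\bigO(w 5^w k^2 N t)$ time given a nice tree decomposition of $B(\fG)$ of width $w$ on $t$ nodes. Next I would use the remark immediately after that theorem: since the incidence graph $B(\fG)$ has $\bigO(M)$ vertices that are groups and the total number of nodes of a (nice) tree decomposition is $\bigO(wM)$, we may take $t = \bigO(wM)$. Substituting this bound for $t$ yields a cost of $\bigO(w 5^w k^2 N \cdot wM) = \bigO(w^2 5^w k^2 N M)$ for a single projection; however, to match the stated bound I would instead track the node count as $\bigO(wM)$ and keep the factor of $w$ from the theorem separate, giving the per-iteration cost $\bigO(w 5^w k^2 N M)$ as written (absorbing the tree-decomposition preprocessing, which by items (a)--(b) before Theorem~\ref{thm:runtime-dynprog} costs $\bigO(2^{\bigO(w^3)} M) + \bigO(w^2 \cdot M)$ and is performed only once, outside the iteration loop). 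Multiplying the per-iteration projection cost by the number $n$ of MEIHT iterations then gives the claimed total runtime $\bigO(w 5^w k^2 N M n)$.

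The step I expect to require the most care is the bookkeeping that justifies folding the factor from $t = \bigO(wM)$ into the stated bound without an extra power of $w$, and confirming that the one-time cost of computing the (nice) tree decomposition is genuinely dominated by the iterated projection cost rather than appearing additively in the final expression. The remaining ingredients---that the per-iteration cost is dominated by the projection, and that the median and sensing-matrix operations on a binary matrix with $d$ ones per column cost only $\bigO(dN)$ or $\bigO(mN)$ per iteration---are routine and follow directly from the structure of the MEIHT update~\eqref{eqn:meihtalgo} and the observations in \cite{bah2014model}. Putting these together, the proposition follows immediately from Theorem~\ref{thm:runtime-dynprog} and the preceding discussion.
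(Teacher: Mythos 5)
Your proposal is correct and follows essentially the same route as the paper: the per-iteration cost is dominated by the exact projection, whose cost is taken from Theorem~\ref{thm:runtime-dynprog} with $t = \bigO(wM)$, and this is multiplied by the number of iterations $n$. The bookkeeping worry you raise about an extra factor of $w$ when substituting $t = \bigO(wM)$ into $\bigO(w5^w k^2 N t)$ is a real (if minor) looseness, but it is present in the paper's own statement as well, which likewise records the total as $\bigO(w 5^w k^2 N M)$ rather than $\bigO(w^2 5^w k^2 N M)$.
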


\begin{proof}
Before we start the MEIHT procedure we have to calculate a nice tree decomposition of the incidence graph of the group model. This can be done in $\mathcal O ((N+M)2^{\bigO(w_T^3)}+w_T^2)$. Then in each of the iterations of the MEIHT we have to solve the exact projection onto the model which is the dominant operation of the MEIHT. Since the projection on the group model with bounded treewidth $w_T$ can be done through the dynamic programming algorithm that runs in $\bigO((N+M)(w_T^2 5^{w_T} G^2 N))$, as proven in Section \ref{sec:treewidth}, this proves the result. \qed
\end{proof}

\begin{remark}
	The convergence results above hold when MEIHT is modified appropriately to solve the standard $K$-sparse and $G$-group-sparse problem with groups having bounded treewidth, where the projection becomes $\mathcal{P}_{\fG_{G,K}}$. However, in this case the runtime complexity in each iteration grows by a factor of $\bigO(K)$, as indicated in Remark \ref{rem:kplusG}.
\end{remark}

\subsection{Exact Projection for General Group Models}\label{sec:exact}

In this section we consider the most general version of group models, i.e. $\mathfrak{G} = \{\cG_1,\ldots,\cG_M\}$ is an arbitrary set of groups and $G\in [M]$ and $K\in [N]$ are given budgets. We study the structured sparsity model $\mathfrak G_{G,K}$ introduced in Section \ref{sec:Prel_GroupStructures}. Here additional to the number $G$ of groups that can be selected we bound the number of indices to be selected in these groups by $K$ (i.e. we consider a group-sparse model with an additional standard sparsity constraint). Note that setting $K = N$ reduces this model $\mathfrak G_{G,K}$ to the general group model $\mathfrak G_{G}$.

If we want to apply exact projection recovery algorithms like the Model-IHT and MEIHT to group models, iteratively the model projection problem has to be solved, i.e. in each iteration for a given signal $\mathbf \x\in\RR^N$ we have to find the closest signal $\mathbf{\hat \x}$ which has support in the model $\mathfrak G_{G,K}$. In this section we will derive an efficient procedure based on the idea of Benders' Decomposition to solve the projection problem. This procedure is analysed and implemented in Section \ref{sec:computations}. 


It has been proved that the \MS~problem for group models without a sparsity condition on the support is NP-hard \cite{baldassarre2016group}. Therefore the projection problem on the more general model $\mathfrak G_{G,K}$ is NP-hard as well. The latter problem can be reformulated by the integer programming formulation
\begin{equation}\label{eqn:generalprob}
\begin{aligned}
\max \ & \bf w^\top \bu \\
s.t. & \sum_{i=1}^{N} u_i \le K \\
& \sum_{j=1}^{M} v_j \le G \\
& u_i\le \sum_{j:i\in \cG_j} v_j \quad \forall i=1,\ldots , N \\
& \bu\in\{ 0,1\}^N, \ \bv\in \{ 0,1\}^M .
\end{aligned}
\end{equation}
Here $\w$ are the squared entries of the signal, the $\bv$-variables represent the groups and the $\bu$-variables represent the elements which are selected. Note that by choosing $K=N$ we obtain the projection problem for classical group models $\mathfrak G_G$.

To derive an efficient algorithm for the projection problem we use the concept of Benders' Decomposition which was already studied in \cite{benders1962partitioning,geoffrion1972generalized}. The idea of this approach is to decompose Problem \ref{eqn:generalprob} into a master problem and a subproblem. Then iteratively the subproblem is used to derive feasible inequalities for the master problem until no feasible inequality can be found any more. This procedure has been applied to Problem \ref{eqn:generalprob} without the sparsity constraint on the $\bu$-variables in \cite{cordeau2018benders}. The following results for the more general Problem \ref{eqn:generalprob} are based on the the idea of Benders' Decomposition and extend the results in \cite{cordeau2018benders}.

First we can relax the $\bu$-variables in the latter formulation without changing the optimal value, i.e. we may assume $\bu\in [0,1]^N$. We can now reformulate \eqref{eqn:generalprob} by
\begin{equation}\label{eqn:BendersStartProblem}
\begin{aligned}
\max \ & \mu \\
s.t. \quad & \mu \le \max_{\bu\in P(\bv)} {\bf w^\top \bu} \\
& \sum_{j=1}^{M} v_j \le G \\
& \bv\in \{ 0,1\}^M
\end{aligned}
\end{equation}
where $P(\bv)=\{\bu\in [0,1]^N \ | \ \sum_{i=1}^{N} u_i \le K, \ u_i\le \sum_{j:i\in \cG_j} v_j, \ i=1,\ldots , N\}$. Replacing the linear problem $\max_{\bu\in P(\bv)} {\bf w^\top \bu}$ in \eqref{eqn:BendersStartProblem} by its dual formulation, we obtain
\begin{equation}\label{eqn:generalprobrefform}
\begin{aligned}
\max \ & \mu \\
s.t. \quad & \mu \le \min_{\alpha,{\boldsymbol \beta},{\boldsymbol\gamma}\in P_D} \alpha K + \sum_{i=1}^{N}\beta_i\left( \sum_{j:i\in \cG_j} v_j \right) + \sum_{i=1}^{N} \gamma_i \\
& \sum_{j=1}^{M} v_j \le G \\
& \bv\in \{ 0,1\}^M
\end{aligned}
\end{equation}
where $P_D=\{\alpha,{\boldsymbol\beta},{\boldsymbol \gamma}\ge 0 \ | \ \alpha + \beta_i + \gamma_i\ge w_i \ i=1,\ldots , N\}$ is the feasible set of the dual problem. Since $P_D$ is a polyhedron and the minimum in \eqref{eqn:generalprobrefform} exists, the first constraint in \eqref{eqn:generalprobrefform} holds if and only if it holds for each vertex $(\alpha^l,{\boldsymbol \beta}^l,{\boldsymbol \gamma}^l)$ of $P_D$. Therefore Problem \eqref{eqn:generalprobrefform} can be reformulated by
\begin{equation}\label{eqn:masterprob}
\begin{aligned}
\max \ & \mu \\
s.t. \quad & \mu \le \alpha^l K + \sum_{i=1}^{N}\beta_i^l\left( \sum_{j:i\in \cG_j} v_j \right) + \sum_{i=1}^{N} \gamma_i^l \quad l=1,\ldots ,t \\
& \sum_{j=1}^{M} v_j \le G \\
& \bv\in \{ 0,1\}^M
\end{aligned}
\end{equation}
where $(\alpha^1,{\boldsymbol \beta}^1,{\boldsymbol \gamma}^1),\ldots ,(\alpha^t,{\boldsymbol \beta}^t,{\boldsymbol \gamma}^t)$ are the vertices of $P_D$. Each of the constraints 
\[
\mu \le \alpha^l K + \sum_{i=1}^{N}\beta_i^l\left( \sum_{j:i\in \cG_j} v_j \right) + \sum_{i=1}^{N} \gamma_i^l
\]
for $l=1,\ldots ,t$ is called \textit{optimality cut}.

The idea of Benders' Decomposition is, starting from Problem \eqref{eqn:masterprob} containing no optimality cut (called the \textit{master problem}), to iteratively calculate the optimal $({\bv}^*,\mu^*)$ and then find a optimality cut which cuts off the latter optimal solution. In each step the most violating optimality cut is determined by solving
\begin{equation}\label{eqn:subprob}
\begin{aligned}
\min & \ \ \alpha K + \sum_{i=1}^{N}\beta_i\left( \sum_{j:i\in \cG_j} v_j^* \right) + \sum_{i=1}^{N} \gamma_i \\
s.t. \quad & \alpha + \beta_i + \gamma_i\ge w_i \ \ i=1,\ldots , N \\
& \alpha,{\boldsymbol \beta},{\boldsymbol \gamma}\ge 0,
\end{aligned}
\end{equation}
for the actual optimal solution ${\bv}^*$. If the optimal solution fulfills
\[
\mu^* > \alpha^* K + \sum_{i=1}^{N}\beta_i^*\left( \sum_{j:i\in \cG_j} v_j^* \right) + \sum_{i=1}^{N} \gamma_i^*,
\]
then the optimality cut related to the optimal vertex $(\alpha^*,\boldsymbol\beta^*,\boldsymbol\gamma^*)$ is added to the master problem. This procedure is iterated until the latter inequality is not true any more. The last optimal $\bv^*$ must then be optimal for Problem \eqref{eqn:generalprob} since the first constraint in \eqref{eqn:generalprobrefform} is then true for $\bv^*$.

If we use the latter Benders' Decomposition approach it is desired to use fast algorithms for the master problem \eqref{eqn:masterprob} and the subproblem \eqref{eqn:subprob} in each iteration. By the following lemma an optimal solution of the subproblem can be easily calculated.
\begin{lemma}\label{lem:subproblem}
For a given solution $\bv\in\{ 0,1\}^M$ we define $I_{\bv}:=\left\{ i=1,\ldots ,N\ | \ \sum_{j:i\in \cG_j}v_j > 0\right\}$ and $I_{\bv}^K$ by the indices of the $K$ largest values $w_i$ for $i\in I_{\bv}$. An optimal solution of Problem \eqref{eqn:subprob} is then given by $(\alpha^*,\boldsymbol \beta^*,\boldsymbol \gamma^*)$ where $\alpha^* = \max_{i\in I_{\bv}\setminus I_{\bv}^K} w_i$ and 
\[
(\beta_i^*,\gamma_i^*) =
\begin{cases}
(w_i,0) & \text{ if } i\in [N]\setminus I_{\bv}\\
(0,0) & \text{ if } i\in I_\bv\setminus I_{\bv}^K \\
(0, w_i-\alpha^*) & \text{ if } i\in I_{\bv}^K \\
\end{cases}
\]
\end{lemma}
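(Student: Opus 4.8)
The plan is to recognize problem \eqref{eqn:subprob} as the linear programming dual of the inner maximization $\max_{\bu\in P(\bv)}\w^\top\bu$ and then certify the claimed point by exhibiting a matching primal solution. Write $c_i := \sum_{j : i \in \cG_j} v_j$ for the coefficient of $\beta_i$ in the objective, so that $c_i \ge 1$ exactly when $i \in I_{\bv}$ and $c_i = 0$ otherwise, and recall that the weights $w_i = x_i^2 \ge 0$. Dualizing $\max\{\w^\top\bu : \sum_i u_i\le K,\ u_i\le c_i,\ 0\le u_i\le 1\}$ with multipliers $\alpha$ (for the cardinality bound), $\beta_i$ (for $u_i\le c_i$) and $\gamma_i$ (for $u_i\le 1$) reproduces \eqref{eqn:subprob} exactly; so it suffices to produce a feasible primal and a feasible dual of equal value.

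First I would check that $(\alpha^*,\boldsymbol\beta^*,\boldsymbol\gamma^*)$ is feasible for \eqref{eqn:subprob}. Nonnegativity is immediate for $i\in[N]\setminus I_{\bv}$ and for $i\in I_{\bv}\setminus I_{\bv}^K$; for $i\in I_{\bv}^K$ one needs $\gamma_i^* = w_i-\alpha^*\ge 0$, which holds because $\alpha^* = \max_{i'\in I_{\bv}\setminus I_{\bv}^K} w_{i'}$ is the $(K+1)$-st largest weight among the covered indices while $I_{\bv}^K$ collects the $K$ largest, so $w_i\ge\alpha^*$ on $I_{\bv}^K$. The constraint $\alpha^*+\beta_i^*+\gamma_i^*\ge w_i$ is then checked case by case: for $i\notin I_{\bv}$ it reads $\alpha^*+w_i\ge w_i$; for $i\in I_{\bv}\setminus I_{\bv}^K$ it reads $\alpha^*\ge w_i$, which is true by the definition of $\alpha^*$; and for $i\in I_{\bv}^K$ it holds with equality.

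Next I would evaluate the dual objective at this point. Since $\beta_i^*\ne 0$ only for $i\notin I_{\bv}$, where $c_i=0$, the $\boldsymbol\beta$-term vanishes, and (using $|I_{\bv}^K|=K$) the value collapses to
\[
\alpha^* K + \sum_{i\in I_{\bv}^K}(w_i-\alpha^*) \;=\; \sum_{i\in I_{\bv}^K} w_i .
\]
To certify optimality I would exhibit the primal point $u_i=1$ for $i\in I_{\bv}^K$ and $u_i=0$ otherwise. This lies in $P(\bv)$: it satisfies $\sum_i u_i = K$, and $u_i\le c_i$ holds since every $i\in I_{\bv}^K$ is covered ($c_i\ge 1$). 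Its objective $\w^\top\bu=\sum_{i\in I_{\bv}^K} w_i$ equals the dual value above, so by weak LP duality both points are optimal.

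The step requiring the most care — rather than any deep difficulty — is the degenerate regime $|I_{\bv}|\le K$, where there is no $(K+1)$-st covered weight: here $I_{\bv}\setminus I_{\bv}^K=\emptyset$, the convention $\max\emptyset=0$ forces $\alpha^*=0$, and the same primal/dual pair (now covering all of $I_{\bv}$) again matches with common value $\sum_{i\in I_{\bv}} w_i$. Ties among the $w_i$ at the threshold $\alpha^*$ must also be handled, but any admissible choice of $I_{\bv}^K$ leaves $\sum_{i\in I_{\bv}^K}w_i$ unchanged, so the certificate is unaffected. Conceptually the result is just the statement that, because the weights are nonnegative, the inner LP is a fractional ``cover at most $K$ of the available indices'' problem, solved greedily by the $K$ heaviest covered weights, with $\alpha^*$ its optimal multiplier for the cardinality constraint. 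As an alternative to duality one can argue directly: for fixed $\alpha$ the subproblem separates over $i$, each inner minimization contributing $\min\{c_i,1\}\max\{0,w_i-\alpha\}$, which reduces \eqref{eqn:subprob} to minimizing the one-dimensional piecewise-linear convex function $f(\alpha)=\alpha K + \sum_{i\in I_{\bv}}\max\{0,w_i-\alpha\}$; its subgradient $K-|\{i\in I_{\bv}: w_i>\alpha\}|$ changes sign precisely at the $(K+1)$-st largest covered weight, recovering $\alpha^*$ and the stated $\boldsymbol\beta^*,\boldsymbol\gamma^*$.
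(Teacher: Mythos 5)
Your proposal is correct and follows essentially the same route as the paper: both identify \eqref{eqn:subprob} as the LP dual of the inner maximization over $P(\bv)$ and both build on the primal point $u_i = 1$ exactly on $I_{\bv}^K$. The one difference is in how optimality is certified: the paper asserts that this primal point is optimal and then \emph{derives} the dual values from the complementary slackness conditions (with a somewhat informal argument in its fourth case, where slackness gives no information), whereas you verify dual feasibility directly and match the two objective values, concluding by weak duality. Your version is, if anything, the tighter write-up --- it does not presuppose primal optimality, and your explicit handling of the degenerate regime $|I_{\bv}| \le K$ and of ties at the threshold $\alpha^*$ covers points the paper glosses over.
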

\begin{proof}
Note that for a given ${\bv}^*$ the dual problem of subproblem \eqref{eqn:subprob} is
\begin{equation}\label{eqn:primalproblem}
\begin{aligned}
\max \ &{\bf w}^\top {\bu}\\
s.t. \quad &\sum_{i=1}^{N} u_i \le K \\
&u_i\le \sum_{j:i\in \cG_j} v_j^* \ \ i=1,\ldots , N\\
& {\bu}\in [0,1]^N . 
\end{aligned}
\end{equation}
It is easy to see that there exists an optimal solution $\bu^*$ of Problem \eqref{eqn:primalproblem} where $u_i=1$ if and only if $i\in I_{\bv}^K $ and $u_i^*=0$ otherwise. We will use the dual slack conditions
\begin{equation}\label{eqn:slackcondition}
\left(\sum_{j:i\in \cG_j} v_j^* - u_i^*\right)\beta_i = 0 \text{ and } \left(1 - u_i^*\right)\gamma_i = 0 \ i=1,\ldots ,N,
\end{equation}
to derive the optimal values for $\alpha,\boldsymbol\beta,\boldsymbol\gamma$. We obtain the following $4$ cases:

\textit{Case $1$:} If $u_i^*=0$ and $\sum_{j:i\in \cG_j} v_j^*>0$ , i.e. $i\in I_{\bv}\setminus I_{\bv}^K$, then by conditions \eqref{eqn:slackcondition} we have $\beta_i=\gamma_i=0$. To ensure the constraint $\alpha + \beta_i + \gamma_i\ge w_i$, the value of $\alpha$ must be at least $\max_{i\in I_{\bv}\setminus I_{\bv}^K} w_i$.

\textit{Case $2$:} If $u_i^*=0$ and $\sum_{j:i\in \cG_j} v_j^*=0$, then $i\in[N]\setminus I_{\bv}$ and the objective coefficient of $\beta_i$ in Problem \eqref{eqn:subprob} is $0$. Therefore we can increase $\beta_i$ as much as we want without changing the objective value and therefore we set $\beta_i=w_i$ to ensure the $i$-th constraint $\alpha + \beta_i + \gamma_i\ge w_i$ and set $\gamma_i=0$.

\textit{Case $3$} If $u_i^*=1$, i.e. $i\in I_{\bv}^K$, and $\sum_{j:i\in \cG_j} v_j^*>1$ then by condition \eqref{eqn:slackcondition} $\beta_i=0$. Therefore to ensure the $i$-th constraint $\alpha + \beta_i + \gamma_i\ge w_i$ the value of $\gamma_i$ must be at least $w_i-\alpha$ and since we minimize $\gamma_i$ in the objective function the latter holds with equality.

\textit{Case $4$:} If $u_i^*=1$, i.e. $i\in I_{\bv}^K$, and $\sum_{j:i\in \cG_j} v_j^*=1$ then we cannot use condition \eqref{eqn:slackcondition} to derive the values for $\beta_i$ and $\gamma_i$. Nevertheless in this case both variables have an objective coefficient of $1$ while $\alpha$ has an objective coefficient of $K$. By increasing $\alpha$ by $1$ the objective value increases by $K$. In Cases $1$ and $2$ nothing changes, while for each index in Case $3$ we could decrease $\gamma_i$ by $1$ to remain feasible. But since at most $K$ indices $i$ fulfil the conditions of Case $3$ we cannot improve our objective value by this strategy. Therefore $\alpha$ has to be selected as small as possible in Case $4$, i.e. by Case $1$ we set $\alpha = \max_{i\in I_{\bv}\setminus I_{\bv}^K} w_i$ and to ensure feasibility we set $\gamma_i=w_i-\alpha$. This concludes the proof.\qed
\end{proof}
\begin{theorem}
An optimal solution of subproblem \eqref{eqn:subprob} can be calculated in time $\mathcal O (NM|G_\text{max}|)$.
\end{theorem}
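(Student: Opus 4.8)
The plan is to observe that Lemma~\ref{lem:subproblem} already hands us a closed-form optimal solution of \eqref{eqn:subprob}, so the theorem reduces to a pure runtime analysis: I only need to bound the cost of assembling the triple $(\alpha^*,\boldsymbol\beta^*,\boldsymbol\gamma^*)$ described there. This construction decomposes naturally into four ingredients, which I would compute in order: the covered set $I_{\bv}=\{i : \sum_{j:i\in\cG_j} v_j^*>0\}$, the top-$K$ index set $I_{\bv}^K$, the threshold $\alpha^*$, and finally the per-coordinate values $(\beta_i^*,\gamma_i^*)$ according to the three cases of the lemma.

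First I would compute $I_{\bv}$. The direct way is, for each index $i\in[N]$ and each group index $j\in[M]$ with $v_j^*=1$, to test membership $i\in\cG_j$; since a group contains at most $|G_\text{max}|$ elements, one such membership test costs $\mathcal O(|G_\text{max}|)$, and the whole sweep therefore costs $\mathcal O(NM|G_\text{max}|)$. This step is the dominant one and is exactly where the first term of the claimed bound originates.

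Next I would determine $I_{\bv}^K$ and $\alpha^*$. Since $|I_{\bv}|\le N$, sorting the values $\{w_i : i\in I_{\bv}\}$ in $\mathcal O(N\log N)$ time lets me read off the $K$ largest (which form $I_{\bv}^K$) together with the $(K+1)$-st largest, and the latter equals $\alpha^*=\max_{i\in I_{\bv}\setminus I_{\bv}^K} w_i$ (with the convention $\alpha^*=0$ when $|I_{\bv}|\le K$, which the sorted order also exposes). With $I_{\bv}$, $I_{\bv}^K$ and $\alpha^*$ in hand, a single linear pass over $[N]$ assigns each $(\beta_i^*,\gamma_i^*)$ by the case distinction of Lemma~\ref{lem:subproblem} in $\mathcal O(N)$ time. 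Summing the three contributions yields the stated bound $\mathcal O(NM|G_\text{max}| + N\log N)$.

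I do not expect a serious obstacle here, as the theorem is essentially a bookkeeping step once Lemma~\ref{lem:subproblem} is granted. The two points that need genuine care are pinning down the cost model for the membership test $i\in\cG_j$ (which is what produces the $|G_\text{max}|$ factor, assuming groups are stored as plain lists rather than hash sets), and checking that none of the four cases of the lemma forces a computation beyond the data already gathered. In particular, the objective term $\beta_i^*\bigl(\sum_{j:i\in\cG_j}v_j^*\bigr)$ never requires evaluating a full coverage count, since $\beta_i^*\neq 0$ occurs only when $i\notin I_{\bv}$, in which case that coefficient vanishes; so knowing the mere membership indicator $I_{\bv}$, rather than the coverage multiplicities, suffices throughout.
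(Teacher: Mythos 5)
Your proposal is correct and follows essentially the same route as the paper's own proof: compute $I_{\bv}$ by membership sweeps in $\mathcal O(NM|G_\text{max}|)$, sort the weights on $I_{\bv}$ in $\mathcal O(N\log N)$ to extract $I_{\bv}^K$ and $\alpha^*$, then assign $(\beta_i^*,\gamma_i^*)$ in a linear pass. Your added remarks on the cost model for membership testing and on not needing coverage multiplicities are sensible refinements but do not change the argument.
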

\begin{proof}
The set $I_{\bv}$ can be calculated in time $\mathcal O(NM|G_\text{max}|)$ by going through all groups for each index $i\in [N]$ and checking if the index is contained in one of the groups. To obtain $I_{\bv}^K$ we have to find the $K$-th largest element in $\left\{ w_i \ | \ i\in I_{\bv}\right\}$ which can be done in time $\mathcal O(N)$; see \cite{DBLP:journals/spe/Musser97}. Afterwards we select all values which are larger than the $K$-th largest element which can be done in $\mathcal O (N)$. Assigning all values to $(\alpha,\boldsymbol \beta,\boldsymbol \gamma)$ can also be done in time $\mathcal O (N)$. \qed
\end{proof}

The following theorem states that the masterproblem can be solved in pseudopolynomial time if the number of constraints is fixed. Nevertheless note that the number of iterations of the procedure described above may be exponential in $N$ and $M$.
\begin{theorem}
Problem \eqref{eqn:masterprob} with $t$ constraints can be solved in $\mathcal O (MG(NW)^t)$ where $W=\max_{i\in[N]}w_i$.
\end{theorem}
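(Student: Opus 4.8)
The plan is to recognise \eqref{eqn:masterprob} as a max--min of $t$ linear functions of $\bv$ over the knapsack polytope $\{\bv\in\{0,1\}^M:\sum_j v_j\le G\}$, and to solve this by a pseudopolynomial dynamic program over the groups. First I would rewrite each optimality cut by swapping the order of summation: setting
\[
A^l=\alpha^l K+\sum_{i=1}^N\gamma_i^l,\qquad b_j^l=\sum_{i\in\cG_j}\beta_i^l,
\]
the $l$-th right-hand side becomes $A^l+\sum_{j=1}^M b_j^l v_j$, so that \eqref{eqn:masterprob} is exactly
\[
\max_{\substack{\bv\in\{0,1\}^M\\ \sum_j v_j\le G}}\ \min_{1\le l\le t}\Big(A^l+\sum_{j=1}^M b_j^l v_j\Big).
\]
Since the cuts are generated as in Lemma~\ref{lem:subproblem}, the entries satisfy $\beta_i^l\in\{0,w_i\}$ and $\gamma_i^l\in\{0,w_i-\alpha^l\}$, so that (assuming integer weights $w_i$) all the $A^l$ and $b_j^l$ are nonnegative integers, which is what makes the grid in the running time integral.

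Next I would set up a dynamic program that scans the groups $\cG_1,\dots,\cG_M$ one at a time. A state is a tuple $(j,g,s_1,\dots,s_t)$, where $j$ is the number of groups already processed, $g\le G$ counts how many have been selected, and $s_l$ records the accumulated partial sum $\sum_{j'\le j,\ v_{j'}=1} b_{j'}^l$ of the $l$-th linear form. Processing group $\cG_{j+1}$ branches into two transitions: either set $v_{j+1}=0$ (leaving the state unchanged) or, provided $g<G$, set $v_{j+1}=1$ and update $g\mapsto g+1$ and $s_l\mapsto s_l+b_{j+1}^l$ for each $l$. At the end the value of a terminal state is $\min_{l}(A^l+s_l)$ and we return the maximum over all terminal states, recovering the optimal $\bv^*$ by the usual backtracking of stored predecessors.

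The decisive point for the claimed complexity is that each coordinate $s_l$ may be \emph{capped at $NW$}. Because the objective is a minimum over $l$, clamping every quantity $A^l+s_l$ at $NW$ replaces the optimum $\mu^*$ by $\min(NW,\mu^*)$ while leaving a maximiser $\bv^*$ unchanged; this is harmless since the projection value we are after is bounded by $\sum_{i}w_i\le NW$, and any master optimum exceeding $NW$ is detected and still yields a valid $\bv^*$ for the next Benders iteration. With this cap each $s_l$ takes at most $NW+1$ distinct integer values, so there are $\mathcal O\big(M\,G\,(NW)^t\big)$ states, and each is processed with $\mathcal O(t)=\mathcal O(1)$ work for fixed $t$; this yields the bound $\mathcal O(MG(NW)^t)$.

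The main obstacle I anticipate is precisely the justification of the cap and of the integrality of the grid. One must argue carefully that truncating the partial sums at $NW$ does not discard the relevant optimum --- the natural a~priori bound on $s_l$ from selecting $G$ overlapping groups is only $\mathcal O(G\,NW)$, so it is the observation that only the minimum (hence values up to the total weight $\le NW$) matters that reduces the per-coordinate range from $\mathcal O(GNW)$ to $\mathcal O(NW)$ and produces the stated $(NW)^t$ factor rather than $(GNW)^t$. Establishing that the weights may be taken integral, so that the state grid is genuinely finite and of the claimed size, is the other ingredient that turns the scheme into a pseudopolynomial algorithm.
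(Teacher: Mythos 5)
Your proof is correct and in substance it is the same algorithm as the paper's: both rewrite \eqref{eqn:masterprob} as a max--min of $t$ linear functions over $\{\bv\in\{0,1\}^M:\sum_j v_j\le G\}$ (the paper's reformulation \eqref{eqn:robustselection}) and solve it by a pseudopolynomial dynamic program whose state records the partial sum of each of the $t$ scenarios. The difference is one of packaging: the paper identifies \eqref{eqn:robustselection} as a robust selection problem with an uncertain constant and outsources the DP to Kouvelis--Yu (and Buchheim--Kurtz for the constant term), then bounds the relevant magnitude by $C=\max_l\,\alpha^l+\sum_i\beta_i^l+\sum_i\gamma_i^l\le(2N+1)W$ using the structure of the cuts from Lemma~\ref{lem:subproblem}; you build the DP from scratch. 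Your version is more self-contained and, notably, more careful on exactly the point the paper glosses over: because elements may lie in several selected groups, the per-scenario partial sum $\sum_{j:v_j=1}\sum_{i\in\cG_j}\beta_i^l$ is a priori only bounded by $G\sum_i\beta_i^l=\bigO(GNW)$, not by $\sum_i\beta_i^l\le NW$, so some truncation argument of the kind you give is genuinely needed to land on $(NW)^t$ rather than $(GNW)^t$. Two small caveats on your write-up: clamping does not literally replace the optimum by $\min(NW,\mu^*)$ --- one only gets that the clamped value lies between $\min(NW,\mu^*)$ and $\mu^*$ --- but that weaker statement already suffices for your conclusion (the returned $\bv^*$ is either a true maximizer or has master value at least $NW$, which dominates the projection optimum $\sum_i w_i\le NW$, so the Benders iteration proceeds correctly); and the integrality of the $w_i$, which you rightly flag, is an implicit hypothesis of the theorem in the paper as well, since the bound is pseudopolynomial in $W$.
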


\begin{proof}
Problem \eqref{eqn:masterprob} with $t$ constraints can be reformulated by
\begin{equation}\label{eqn:robustselection}
\begin{aligned}
\max_{{\bv}\in\left\{ 0,1\right\}^M} \ & \min_{l=1,\ldots ,t}  \alpha^l K + \sum_{i=1}^{N}\beta_i^l\left( \sum_{j:i\in \cG_j} v_j \right) + \sum_{i=1}^{N} \gamma_i^l \\
s.t. \quad & \sum_{j=1}^{M} v_j \le G .
\end{aligned}
\end{equation}
The latter problem is a special case of the robust knapsack problem with discrete uncertainty (sometimes called robust selection problem) with additional uncertain constant. In \cite{buchheim2018robust} the authors mention that the problem with an uncertain constant is equivalent to the problem without such a constant. Furthermore using the result in \cite{kouvelis2013robust} Problem \eqref{eqn:robustselection} can be solved in $\mathcal O (MGC^t)$ where 
\[C:=\max_{l=1,\ldots t} \alpha^l + \sum_{i=1}^{N}\beta_i^l + \sum_{i=1}^{N}\gamma_i^l .\]
Since for all solutions $(\alpha^l,\boldsymbol \beta^l,\boldsymbol\gamma^l)$ generated in Lemma \ref{lem:subproblem} it holds that $\alpha^l, \beta_i^l, \gamma_i^l \le \max_{i\in [N]} w_i$ we have $C\le (2N+1)W$ which proves the result.\qed
\end{proof}

%

\section{Algorithms with Approximation Projection Oracles}\label{sec:low-freq}

As mentioned in the previous sections solving the \MS~problem is \NP-hard in general. Therefore to use classical algorithms as the Model-IHT or the MEIHT we have to solve an \NP-hard problem in each iteration. To tackle problems like this the authors in \cite{hegde2015fast,hegde2015approximation} introduced an algorithm called \textit{Approximate Model-IHT} (AM-IHT) which is based on the idea of the classical IHT but which does not require an exact projection oracle (see Section \ref{sec:preliminaries_AMIHT}). Instead the authors show that under certain assumptions on the measurement matrix a signal can be recovered by just using two approximation variants of the projection problem which they call head- and tail-approximation (for further details again see Section \ref{sec:preliminaries_AMIHT}). 

In this section we apply the latter results to group models of bounded frequency: group models where the maximum number of groups an element is contained in is bounded by some number $f$. 
Note that from Theorem~\ref{thm:hardness-on-grids}~we know that \MS~is NP-hard for group models of frequency 4. 
A particularly interesting case of such group structures is the graphic case, where each element is contained in at most two groups. 
Understanding this case was proposed as an open problem by Baldassarre et al.~\cite{baldassarre2016group}.

Furthermore we apply the theoretical results derived in \cite{hegde2015fast,hegde2015approximation} to group models and show that the number of required measurements compared to the classical structured sparsity case increases by just a constant factor. In Section \ref{sec:computations} we will computationally compare the AM-IHT to the Model-IHT and the MEIHT on interval groups. 

\subsection{Head- and Tail-Approximations for Group Models with Low Frequency}
In this section we derive head- and tail-approximations for the case of group models with bounded frequency. We first recall the definition of head- and tail-approximations for the case of group models. Assume we have given a group model $\fG$ together with $G\in \mathbb N$.

Given a vector $\x$, let $\mathcal H$ be an algorithm that computes a vector $\mathcal H(\x)$ with support in $\fG_{G'}$ for some $G'\in \mathbb N$. Then, given some $\alpha \in \mathbb R$ (typically $\alpha < 1$) we say that $\mathcal H$ is an \emph{$(\alpha,G,G',p)$-head approximation} ~if
\begin{equation}\label{eqn:head-condition_groups}
|| \mathcal H(\x) ||_p \ge \alpha \cdot || \x_{\mathcal S}||_p \mbox{ for all } \mathcal S \in \fG_G.
\end{equation}
In other words, $\mathcal H$ uses $G'$ many groups to cover at least an $\alpha$-fraction of the maximum total weight covered by $G$ groups. Note that $G'$ can be chosen larger than $G$.

Moreover, let $\mathcal T$ be an algorithm which computes a vector $\mathcal T(\x)$ with support in $\fG_{G}$. Given some $\beta \in \mathbb R$ (typically $\beta > 1$) we say that $\mathcal T$ is a \emph{$(\beta,G,G',p)$-tail approximation} if
\begin{equation}\label{eqn:tail-condition_groups}
|| \x - \mathcal T(\x) ||_p \le \beta \cdot || \x - \x_{\mathcal S}||_p \mbox{ for all } \mathcal S \in \fG_G.
\end{equation}
This means that $\mathcal T$ may use $G'$ many groups to leave at most a $\beta$-fraction of weight uncovered compared to the minimum total weight left uncovered by $G$ groups.

In the following we derive the head- and tail-approximation just for the case $p=1$. Note that equivalent approximation procedures can be easily derived for the case $p=2$ by replacing the accuracies $\alpha$ and $\beta$ by $\sqrt{\alpha}$ and $\sqrt{\beta}$ and using the weights $x_i^2$ instead of $|x_i|$ in the latter proofs. We will first present a result which implies the existence of a head approximation.

\begin{theorem}[Hochbaum and Pathria~\cite{hochbaum1998analysis}]\label{thm:hochbaum}
For each $\epsilon > 0$ there exists an $((1-\epsilon),G, \lceil G \log_2 (1/\epsilon) \rceil,1)$-head approximation running in polynomial time.
\end{theorem}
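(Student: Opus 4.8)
The plan is to realize $\mathcal H$ by the classical greedy algorithm for \MWC~and to invoke the standard submodular-greedy analysis. Concretely, I would let $\mathcal H$ start from the empty collection of groups and, for $G'=\lceil G\log_2(1/\epsilon)\rceil$ rounds, repeatedly add the group $\cG_j\in\fG$ that maximizes the marginal increase in covered weight, where for $p=1$ the weight of index $i$ is $|x_i|$. The output $\mathcal H(\x)$ is then supported on the union of at most $G'$ chosen groups, so its support lies in $\fG_{G'}$, as the head-approximation definition demands.

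First I would fix the bookkeeping. Write $f(\mathcal I)=\|\x_{\bigcup_{j\in\mathcal I}\cG_j}\|_1=\sum_{i\in\bigcup_{j\in\mathcal I}\cG_j}|x_i|$ for a collection $\mathcal I$ of group indices; this coverage function is monotone and submodular. Let $\mathrm{OPT}=\max_{\mathcal S\in\fG_G}\|\x_{\mathcal S}\|_1$ be the optimum attainable with $G$ groups, and let $w_\ell$ denote the weight covered by greedy after $\ell$ rounds, so $w_0=0$.

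The heart of the argument is a per-round gap-reduction inequality. Fix an optimal selection $\mathcal I^\star$ of at most $G$ groups with $f(\mathcal I^\star)=\mathrm{OPT}$, and let $\mathcal G_\ell$ be the greedy collection after $\ell$ rounds. By monotonicity the marginal of $\mathcal I^\star$ over $\mathcal G_\ell$ is at least $f(\mathcal I^\star)-w_\ell=\mathrm{OPT}-w_\ell$, and by submodularity this marginal is bounded by the sum of the individual marginals of the at most $G$ groups in $\mathcal I^\star$; hence some single group of $\mathcal I^\star$ has marginal at least $(\mathrm{OPT}-w_\ell)/G$. Since greedy picks a group of maximal marginal gain, its $(\ell+1)$-st gain is at least $(\mathrm{OPT}-w_\ell)/G$, whence
\[
\mathrm{OPT}-w_{\ell+1}\le\left(1-\tfrac1G\right)(\mathrm{OPT}-w_\ell).
\]
Iterating gives $\mathrm{OPT}-w_{G'}\le(1-1/G)^{G'}\mathrm{OPT}\le e^{-G'/G}\,\mathrm{OPT}$. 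By the choice $G'=\lceil G\log_2(1/\epsilon)\rceil$ we have $G'/G\ge\log_2(1/\epsilon)\ge\ln(1/\epsilon)$, so $e^{-G'/G}\le\epsilon$ and therefore $w_{G'}\ge(1-\epsilon)\,\mathrm{OPT}$. Since $\|\mathcal H(\x)\|_1=w_{G'}$ and $\mathrm{OPT}\ge\|\x_{\mathcal S}\|_1$ for every $\mathcal S\in\fG_G$, the head condition \eqref{eqn:head-condition} holds with $\alpha=1-\epsilon$.

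Finally I would bound the runtime: each of the $G'$ rounds scans the $M$ groups and evaluates marginal gains in $\bigO(N)$ time, for a total cost of $\bigO(G'MN)=\bigO(G\log(1/\epsilon)MN)$, which is polynomial. I expect the only delicate point to be the averaging step that produces the factor $1/G$ — one must reason over the at most $G$ groups of a fixed optimal solution, not over all of $\fG$ — together with verifying that the crude comparison $\log_2(1/\epsilon)\ge\ln(1/\epsilon)$ indeed yields the stated $G'$; the remaining estimates are routine. The $p=2$ variant is identical after replacing $|x_i|$ by $x_i^2$ and $\epsilon$ by the corresponding accuracy, exactly as the excerpt notes.
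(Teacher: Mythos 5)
Your proposal is correct and follows exactly the route the paper intends: the paper states this theorem as a citation to Hochbaum and Pathria and only sketches the greedy procedure (Algorithm~\ref{alg:head_approx}, which picks the group of maximal uncovered weight in each round), without reproducing the analysis. You supply the standard submodular-greedy gap-reduction argument, and every step checks out, including the averaging over the at most $G$ groups of a fixed optimal solution and the estimate $(1-1/G)^{G'} \le e^{-G'/G} \le \epsilon$ for $G'=\lceil G\log_2(1/\epsilon)\rceil$.
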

The algorithm derived in \cite{hochbaum1998analysis} was designed to solve the \MWC ~problem and is based on a simple greedy method. The idea is to iteratively select the group which covers the largest amount of uncovered weight. It is proven by the authors that if you are allowed to select enough groups, namely $\lceil G \log_2 (1/\epsilon)\rceil$, then the optimal value is approximated up to an accuracy of $(1-\epsilon)$. The greedy procedure is given in Algorithm \ref{alg:head_approx}. Note that for a given signal $\x$ and a group $\mathcal G\in\mathfrak G$ we define
\[
w(\mathcal G ):= \sum_{i\in \mathcal G} |x_i| . 
\]
\begin{algorithm}\caption{Head-Approximation}\label{alg:head_approx}
\inp $\x\in\mathbb R^N$, $G\in\mathbb N$, $\mathfrak G$, $\epsilon>0$\\
\out $\mathcal H(\x, G,\mathfrak G, \epsilon)$, a head-approximation to $\x$
\begin{algorithmic}[1]
\State $\z\gets 0$
\For{$i=1\ldots G$}
\State Sort the groups in $\mathfrak G$ by decreasing total weight: $w(\mathcal G_{i_1})\ge\ldots \ge w(\mathcal G_{i_M})$. 
\State $z_j \gets x_j \ \forall \ j\in \mathcal G_{i1}$
\State Delete the indices in $\mathcal G_{i1}$ from all groups in $\mathfrak G$.
\EndFor
\State Return: $\mathcal H(\x, G,\mathfrak G, \epsilon) \gets \z$
\end{algorithmic}
\end{algorithm}

Next we derive a tail-approximation for our problems based on the idea of LP rounding. Note that in contrast to the head-approximation, the run-time bound of the following tail-approximation depends on the frequency of the group model.

\begin{theorem}\label{thm:bounded-frequency}
Suppose the frequency of the group model is $f$. For any $\epsilon > 0$ and $\kappa = (1+\epsilon^{-1}) f$ there exists an $((1+\epsilon),G,\kappa G,1)$-tail approximation running in polynomial time.
\end{theorem}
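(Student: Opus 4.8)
I want to find a tail approximation for the group-sparse projection problem, meaning: given a signal $\x$ and a budget $G$, I must select a collection of groups (at most $\alpha G$ of them) so that the weight left uncovered is within a factor $(1+\epsilon)$ of the optimal uncovered weight achievable with $G$ groups. The natural route, given the hint in the theorem statement, is LP-rounding applied to the integer program \eqref{eqn:generalprob-IP} (with $K=N$, and using weights $w_i = |x_i|$ for the $p=1$ case). First I would phrase the tail objective. Rather than maximizing covered weight, the tail guarantee concerns the \emph{uncovered} weight $\|\x\|_1 - \text{(covered weight)}$, so I must be careful: an $(1+\epsilon)$-approximation on the tail is a different quantity from a $(1-\epsilon)$-approximation on the head, and rounding must control the uncovered mass directly rather than the covered mass.

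\medskip

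\noindent\textbf{Key steps.} I would proceed as follows. First, write the natural LP relaxation of \eqref{eqn:generalprob-IP}: relax $\bv\in\{0,1\}^M$ to $\bv\in[0,1]^M$ and $\bu\in\{0,1\}^N$ to $\bu\in[0,1]^N$, keeping the budget $\sum_j v_j \le G$ and the coverage constraints $u_j \le \sum_{i:j\in\cG_i} v_i$. Solve this LP in polynomial time to obtain a fractional optimum $(\bu^*,\bv^*)$; its optimal covered value $\w^\top\bu^*$ is an upper bound on the integral optimum, hence its uncovered value $\|\x\|_1 - \w^\top\bu^*$ is a lower bound on the true optimal uncovered weight $\mathrm{OPT}_{\text{tail}}$. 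Second, perform the rounding of $\bv^*$: the standard move is a threshold/scaling argument. Here the frequency $f$ enters — because each element lies in at most $f$ groups, the fractional coverage $\sum_{i:j\in\cG_i} v_i^*$ of an element is a sum of at most $f$ variables, which lets me relate the element's fractional coverage to how much I must scale up $\bv^*$ to guarantee it is integrally covered. Concretely I would scale $\bv^*$ by a factor $\theta = (1+\epsilon^{-1})f$ and round up: set $\hat v_i = 1$ whenever $\theta v_i^* \ge 1$ (or round the scaled LP). This uses at most $\theta G = \alpha G$ groups, matching the claimed $\alpha = (1+\epsilon^{-1})f$. Third, I would bound the uncovered weight after rounding. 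An element $j$ that was well-covered fractionally (with $\sum_{i:j\in\cG_i} v_i^*$ close to $1$) is covered integrally after scaling; the only elements that can remain uncovered are those whose fractional coverage was small, and the factor $f$ in $\theta$ together with $\epsilon^{-1}$ is exactly what ensures that the total remaining uncovered weight is at most $(1+\epsilon)$ times the fractional uncovered weight, hence at most $(1+\epsilon)\,\mathrm{OPT}_{\text{tail}}$.

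\medskip

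\noindent\textbf{The main obstacle.} The delicate point is the third step: controlling the \emph{uncovered} mass rather than the covered mass. For a head (covered-mass) guarantee, rounding up can only help, so bounding from below is easy; but a tail guarantee must ensure that the elements left uncovered after rounding carry little total weight, and an element can fail to be covered only if every group containing it was dropped. I would set up the accounting so that an element $j$ contributes to the rounded uncovered set only when its scaled fractional coverage stays below $1$, i.e. $\sum_{i:j\in\cG_i} \theta v_i^* < 1$, equivalently $\sum_{i:j\in\cG_i} v_i^* < 1/\theta$. The number of groups containing $j$ being at most $f$ is what bounds the total scaled budget spent, keeping the group count at $\alpha G$; meanwhile $u_j^* \le \sum_{i:j\in\cG_i} v_i^* < 1/\theta$ forces the LP to have "paid" little for such $j$, so their weight is charged against the slack $\epsilon^{-1}$ in a way that yields the $(1+\epsilon)$ bound. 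Making this charging argument precise — in particular choosing the threshold and verifying both the group-count bound $\alpha G$ and the uncovered-weight bound $(1+\epsilon)\,\mathrm{OPT}_{\text{tail}}$ simultaneously with the stated $\alpha=(1+\epsilon^{-1})f$ — is where the real work lies, and I expect the frequency $f$ to be essential precisely because it caps how many fractional group-values can accumulate on any single element.
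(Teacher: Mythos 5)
Your route is the same as the paper's: solve the LP relaxation of \eqref{eqn:generalprob-IP} with weights $w_i=|x_i|$, select exactly the groups with $v_i^*\ge \alpha^{-1}$, bound the group count by Markov's inequality on $\sum_i v_i^*\le G$, and charge the weight of the elements left uncovered against the fractional uncovered mass $\w^\top\boldsymbol{1}-\w^\top\bu^*$, which lower-bounds the true tail optimum. Executed carefully, this reproduces the paper's proof.

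One step of your accounting is off as written, and taken literally it breaks the charging argument. You claim an element $j$ can remain uncovered only if $\sum_{i:j\in\cG_i}v_i^*<1/\theta$ with $\theta=(1+\epsilon^{-1})f$. That is the wrong threshold: coverage of $j$ after rounding requires some \emph{single} group containing $j$ to satisfy $v_i^*\ge 1/\theta$, and a sum of at least $1/\theta$ spread over up to $f$ groups does not guarantee this (take $f$ groups each with $v_i^*$ just below $1/\theta$). The correct statement --- and the only place the frequency actually enters --- is that if $\sum_{i:j\in\cG_i}v_i^*> f/\theta=(1+\epsilon^{-1})^{-1}$ then, since at most $f$ terms appear in that sum, at least one of them is $\ge 1/\theta$ and $j$ is covered. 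Hence the uncovered elements lie in $R=\{j: \sum_{i:j\in\cG_i}v_i^*\le(1+\epsilon^{-1})^{-1}\}$; for such $j$ one has $u_j^*\le(1+\epsilon^{-1})^{-1}=\epsilon/(1+\epsilon)$, so $1-u_j^*\ge 1/(1+\epsilon)$, giving $w_j\le(1+\epsilon)w_j(1-u_j^*)$, and summing over $R$ yields the $(1+\epsilon)$ tail bound. Relatedly, you attribute the group-count bound $\alpha G$ to the frequency; it has nothing to do with $f$ --- it is pure Markov on the budget constraint. With the threshold corrected from $1/\theta$ to $f/\theta$, your charging goes through and the constants match the theorem exactly.
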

\begin{proof}
Given a signal $\x \in \mathbb R^N$, we define $\w = (|x_i|)_{i \in [N]}$.
We consider the following linear relaxation of the \MS~problem
\begin{equation}\label{eqn:generalprob-linear-relax}
\begin{aligned}
\max \ & \w^\top \bu \\
s.t. & \sum_{i \in [M]} v_i = G \\
& u_j \le \sum_{j \in \cG_i} v_i \quad \mbox{for all } j \in [N] \\
& \bu \in [0,1]^N, \ \bv \in [0,1]^M .
\end{aligned}
\end{equation}
Consider an optimal solution $(\bu,\bv)$ of \eqref{eqn:generalprob-linear-relax}.
We compute a group cover $\cS \subseteq \mathcal \fG$ by
\[
\cS = \{ \cG_i \in \fG : v_i \ge \kappa^{-1}\}.
\]
Note that $\cS$ contains at most $\kappa G$ many groups, since
\[
|\cS| \le \kappa \sum_{i=1}^M v_i = \kappa G.
\]
It remains to show that $\cS$ is a tail approximation.
To this end let $R$ be the set of indices only barely covered by $v$, i.e.
\[
R = \{j \in [N] : \sum_{j \in \cG_i} v_i \le (1+\epsilon^{-1})^{-1}\}.
\]
Note that 
\begin{equation}\label{eqn:S-is-good}
\cS \mbox{ covers every element } j \in [N] \setminus R,
\end{equation} 
since $j \notin R$ implies 
\[\sum_{j \in \cG_i} v_i > (1+\epsilon^{-1})^{-1}\]
and hence 
\[v_i \ge \tfrac{(1+\epsilon^{-1})^{-1}}{f} = \kappa^{-1}\]
for some $i$ with $j \in \cG_i$. Moreover, note that 
\[
u_j \le \sum_{j \in \cG_i} v_i \le (1+\epsilon^{-1})^{-1}
\]
and hence
\begin{equation}\label{eqn:rest-elts}
\frac{1-u_j}{1-(1+\epsilon^{-1})^{-1}} \ge 1 \mbox{ for each } j \in R.
\end{equation}
We obtain the inequalities
\begin{align*}
\| x - x_{\bigcup \cS}\|_1 = \sum_{i\in [N] \setminus \bigcup \cS} w_i \le \sum_{i\in R} w_i &\le \sum_{j \in R} \frac{1-u_j}{1-(1+\epsilon^{-1})^{-1}} w_j \\
&\le (1+\epsilon) \sum_{j \in R} w_j (1-u_j) \\
&\le (1+\epsilon) (\w^\top\boldsymbol{1} - \w^\top\bu),
\end{align*}
where we used~\eqref{eqn:S-is-good} and~\eqref{eqn:rest-elts}. Since $\bu$ is an optimal solution of the relaxed problem \eqref{eqn:generalprob-linear-relax}, we have
\[
(1+\epsilon) (\w^\top\boldsymbol{1} - \w^\top\bu) \le (1+\epsilon) (\w^\top\boldsymbol{1} - \w^\top\bu^*) = (1+\epsilon)\| x-x_{\cS^*}\|_1
\]
where $\cS^*$ is an optimal solution of the \MS~problem and $\bu^*$ the corresponding optimal vector of Problem \eqref{eqn:generalprob-IP}. Therefore the latter procedure is a tail-approximation which completes the proof.\qed
\end{proof}


\subsection{AM-IHT and AM-MEIHT for Group Models}
As in the previous sections for a sensing matrix $\A$ and the true signal $\x$ we have a noisy measurement vector $\obs = \A \mathbf x + \noise$ for some noise vector $\noise$. The task consists in recovering the original signal $\mathbf x$, or a vector close to~$\mathbf x$. Furthermore we have given a group model $\fG$ together with $G\in \mathbb N$ with frequency $f\in\NN$.

In the last section we derived polynomial time algorithms for an $((1-\epsilon),G, \lceil G \log_2 (1/\epsilon) \rceil,2)$-head approximation and an $((1+\epsilon),G,(1+\epsilon^{-1}) f G,2)$-tail approximation. Note that we can use any $G$ here. Using the results in Section \ref{sec:preliminaries_AMIHT}, we obtain convergence of the AM-IHT for group models if $\mathcal T$ is an $((1+\epsilon),G,G_T,2)$-tail approximation, $\mathcal H$ is an $((1-\epsilon),G_T+G, G_H,2)$-head approximation, where $G_T:=(1+\epsilon^{-1}) f G$ and $G_H:= \lceil (G_T+G) \log_2 (1/\epsilon)\rceil$, and the sensing matrix $\A$ has $\mathfrak G_{\tilde G}$-RIP with $\tilde G = G+G_T+G_H$. Note that $\tilde G\in \mathcal O (G)$ for fixed accuracy $\epsilon>0$ and frequency $f$. Furthermore $|\mathfrak G_{\tilde G}|\in \mathcal O (M^{cG})$ for a constant $c$. Using the bound \eqref{eqn:measurementboundgroupmodels} we obtain that the number of required measurements for a sub-Gaussian random matrix $\A$ having the $\mathfrak G_{\tilde G}$-RIP with high probability is
\[
m=\mathcal O\left( \delta^{-2} Gg_{\text{max}} \log(\delta^{-1}) + cG\log(M) \right)
\]
which differs by just a constant factor from the number of measurements required in the case of exact projections (see Section \ref{sec:exactprojections}). Under condition \eqref{eqn:condition_convergenceofAMIHT} convergence of the AM-IHT is ensured.
\subsection{Extension to within-group sparsity and beyond} 
The head and tail approximation approach can be extended far beyond the standard group-sparsity model.
It still works even if we are considering $K$-sparse and $G$-group-sparse (i.e. $\fG_{G,K}$-sparse) vectors in our model, for example.

The reason is that the \MS~can be head approximated to a constant even in this case. Formally, if we search for the $K$ weight maximal elements covered by $G$ many groups, we are maximizing a submodular function subject to a knapsack constraint.\footnote{Actually, we are maximizing a submodular function subject to a uniform matroid constraint which is a simpler problem.}
This is known to be approximable to a constant factor (cf.~Kulik et al.~\cite{kulik2009maximizing,kulik2013approximations} and related work).
Suppose we delete the covered elements and run such an approximation algorithm again. Then, after a constant number of steps, we obtain a collection of groups and elements such that the total weight of the elements is at least an $(1-\epsilon)$-fraction of the total weight that a $K$-sparse and $G$-group-sparse solution could ever have.
Moreover, the sparsity budgets are exceeded only by a constant factor each.

Similarly, the analysis given in the proof of Theorem~\ref{thm:bounded-frequency} works even if we impose sparsity on both groups and elements.
Again, we obtain a solution that has a $(1+\epsilon)$-tail guarantee whose support exceeds that of a $G$-group-sparse $K$-sparse vector by at most a constant factor if we assume a bounded frequency.
This leads to the positive consequences detailed above.

More generally, any knapsack constraints on groups and elements can be handled, leading to head and tail approximations in the case when there are non-uniform sparsity budgets on the groups and elements.
However the corresponding head approximations are rather involved, and certainly much less efficient than the simple greedy procedure proposed by the Hochbaum and Pathria algorithm~\cite{hochbaum1998analysis}.

\section{Computations}\label{sec:computations}
In this section we present the computational results for the Model-IHT, MEIHT, AM-IHT and AM-EIHT presented in Section \ref{sec:prelim} for block-group instances.
Precisely, we study block-groups, i.e. each group $\mathcal G \in\mathfrak G$ is a set of sequenced indices, $\mathcal G=[s,t]\cap [N]$, where $1\le s<t\le N$ and each group has the same size $|\mathcal G|=l$. For a given dimension $N$ we generate blocks of size $l=\lfloor 0.02N\rfloor$. We consider two types of block-models, one where the successive groups overlap in $\lfloor\frac{l-1}{2}\rfloor$ items and another where they overlap in $l-1$ items. Note that the frequency is then given by $f=2$ or $f=l$ respectively.

We run all algorithms for random signals $\x\in\RR^N$ in dimension $N\in\left\{100,200,\ldots ,900\right\}$. For each dimension we vary the number of measurements $m\in\left\{ 20,40,.., N\right\}$ and generate $20$ random matrices $\A\in\RR^{m\times N}$ each together with a random signal $\x\in\RR^N$. We assume there is no noise, i.e. $\noise=0$. For a given group model $\mathfrak G$ the support of the signal $\x$ is determined as the union of $G$ randomly selected groups. The components of $\x$ in the support are calculated as identical and independent draws from a standard Gaussian distribution while all other components are set to $0$. Our computations are processed for two classes of random matrices, Gaussian matrices and expander matrices as described in Section~\ref{sec:prelim}. The Gaussian matrices are generated by drawing identical and independent values from a standard Gaussian distribution for each entry of $\A$ and afterwards normalizing each entry by $\frac{1}{\sqrt{m}}$. The expander matrices are generated by randomly selecting $d=\lfloor2\log(N)/\log(G l)\rfloor$ indices in $\left\{ 1,\ldots , m\right\}$ for each column of the matrix. The choice of $d$ is motivated by the choice in \cite{bah2014model}. Each of the algorithms is stopped if either the number of iterations reaches $1000$ or if for any iteration $i+1$ we have $\|\x^{(i+1)}-\x^{(i)}\|_p<10^{-5}$. For the error in each iteration we use $p=1$ for the calculations corresponding to the expander matrices and $p=2$ for the Gaussian matrices. After the determination of the algorithm we calculate the relative error of the returned signal $\hat \x$ to the true signal $\x$, i.e. we calculate
\[
\frac{\| \x-\hat \x\|_p}{\| \x\|_p}.
\]
Again we use $p=1$ for the calculations corresponding to the expander matrices and $p=2$ for the Gaussian matrices. We call a signal \textit{recovered} if the relative error is smaller than $10^{-5}$. For the AM-IHT and the AM-EIHT the approximation accuracy of the head- and the tail approximation algorithms are set to $\alpha = 0.95$ and $\beta=1.05$.

For the exact signal approximation problem which has to be solved in each iteration of the Model-IHT and the MEIHT we implemented the Benders' decomposition procedure presented in Section \ref{sec:exact}. To this end the master problem is solved by CPLEX 12.6 while each optimal solution of the subproblem is calculated using the result of Lemma \ref{lem:subproblem}. Regarding the AM-IHT, for the head-approximation we implemented the greedy procedure given in Algorithm \ref{alg:head_approx} while for the tail-approximation we implemented the procedure of Theorem \ref{thm:bounded-frequency}. Again the LP in the latter procedure is solved by CPLEX 12.6. All computations were calculated on a cluster of 64-bit Intel(R) Xeon(R) CPU E5-2603 processors running at 1.60 GHz with 15MB cache.

The results of the computations are presented in the following diagrams. For all instances we measure the smallest number of measurements $m^\#$ for which the median of the relative error to the true signal is at most $10^{-5}$, i.e. the smallest number of measurements for which at least $50\%$ of the signals were recovered. Furthermore we show the average number of iterations and the average time in seconds the algorithms need to successfully recover a signal, given $m^\#$ number of measurements. We stop increasing the number of measurements $m$ if $m^\#$ is reached.

%
\begin{figure}[h!]
\centering
\includegraphics[scale=0.5]{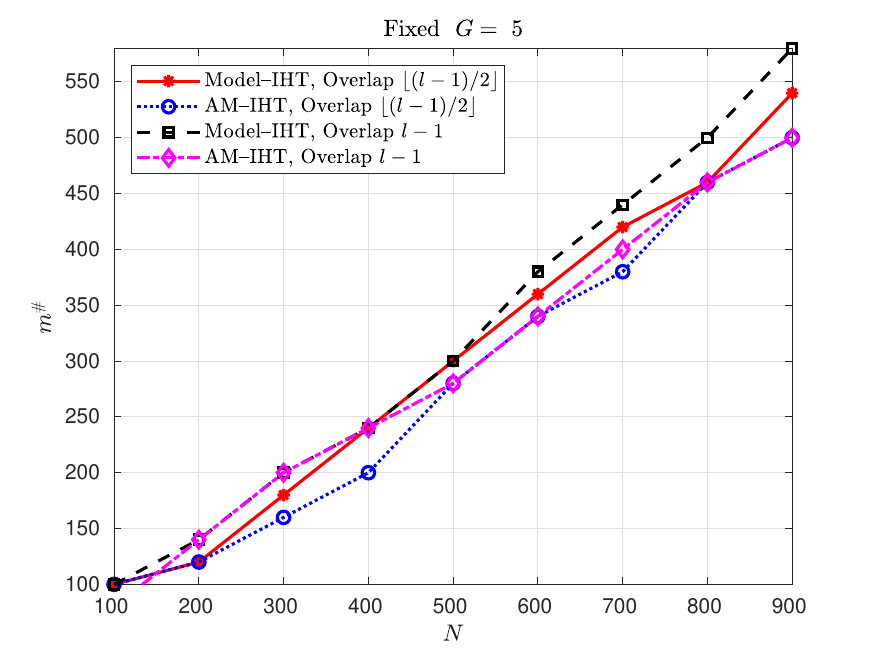}
\includegraphics[scale=0.5]{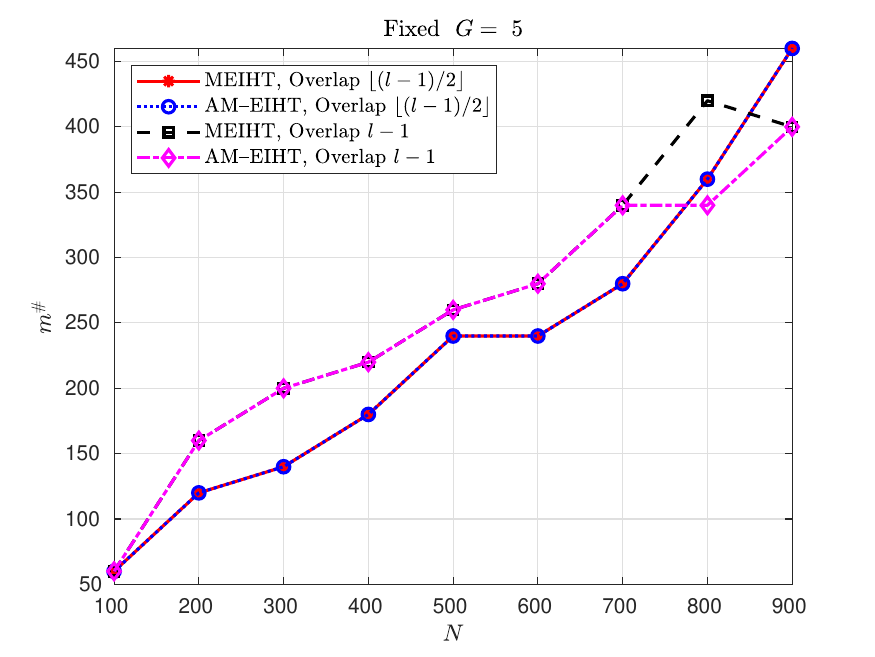}
\caption{Smallest number of measurements $m^\#$ for which the median of the relative error over all random matrices is at most $10^{-5}$.}
\label{fig:VaryNRelError}
\end{figure} 
\begin{figure}[h!]
\centering
\includegraphics[scale=0.5]{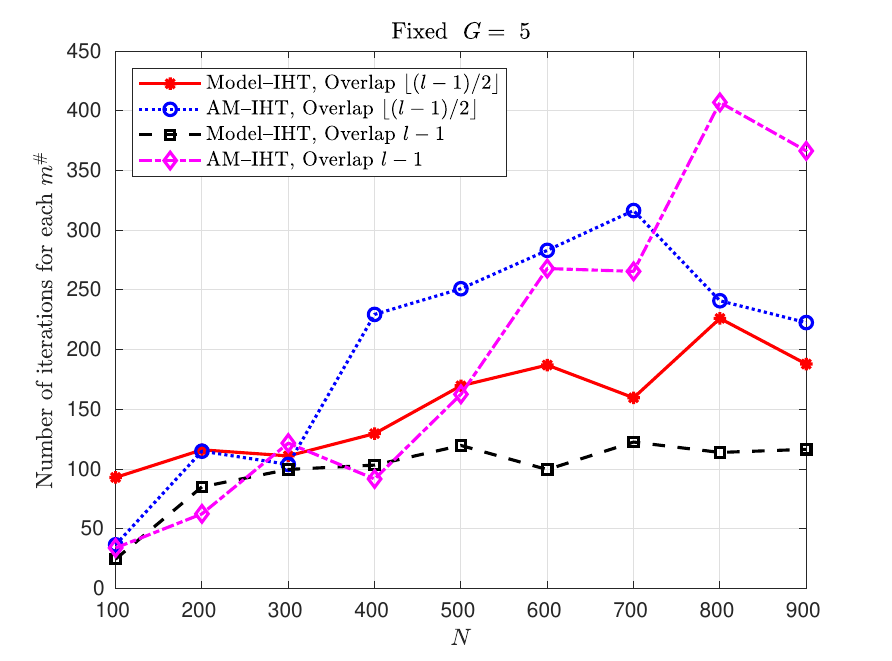}
\includegraphics[scale=0.5]{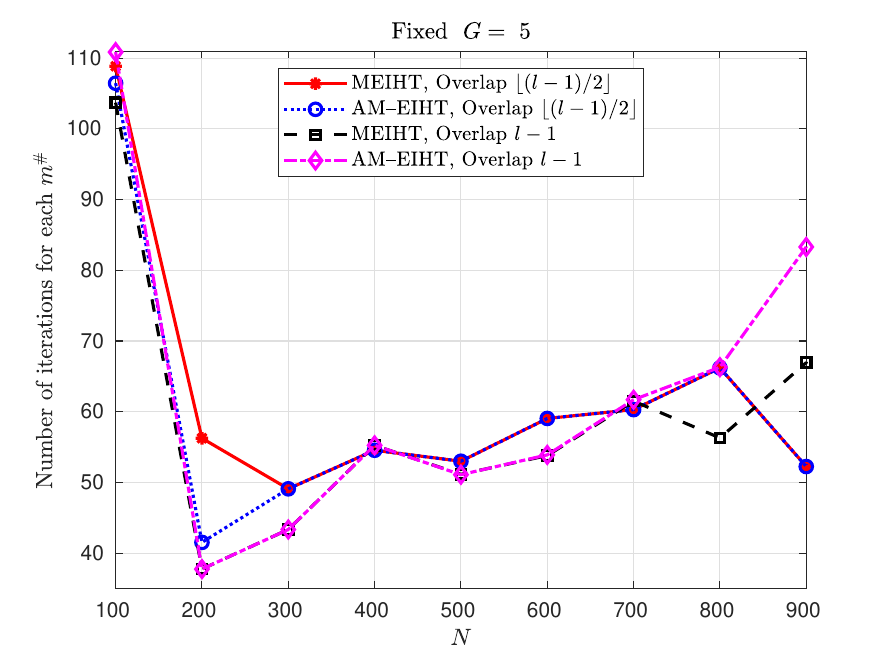}
\caption{Average number of iterations performed by the algorithm to successfully recover the signal with $m^\#$ measurements.}
\label{fig:VaryNIterations}
\end{figure}
\begin{figure}[h!]
\centering
\includegraphics[scale=0.5]{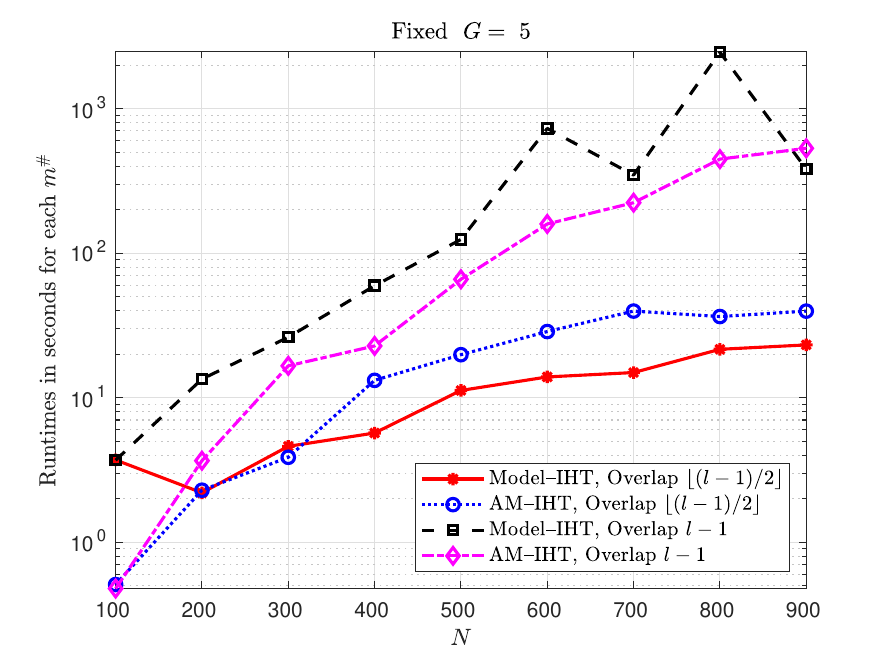}
\includegraphics[scale=0.5]{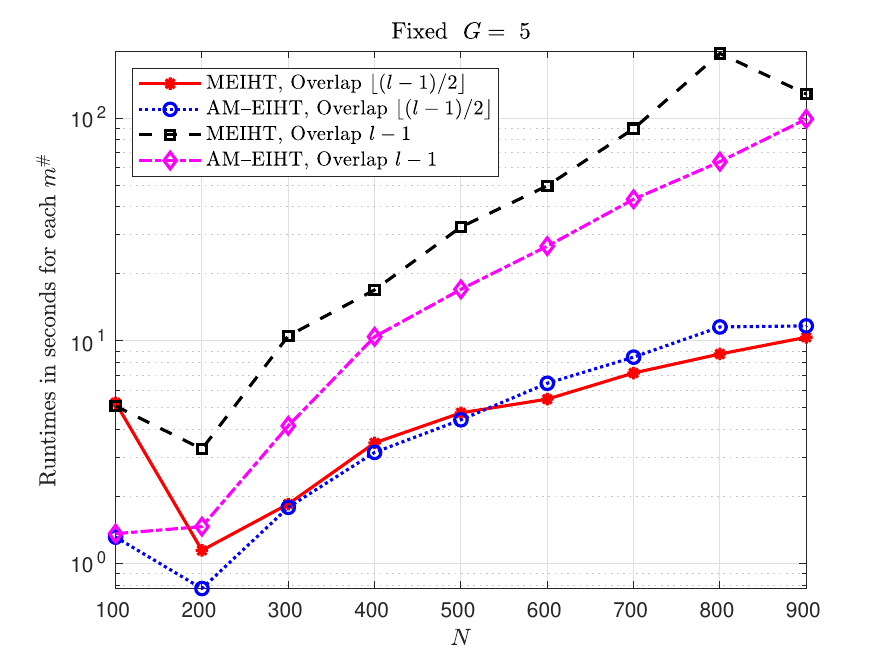}
\caption{Average time in seconds needed by the algorithm to successfully recover the signal with $\bar m$ measurements.}
\label{fig:VaryNTime}
\end{figure}
In Figures \ref{fig:VaryNRelError}, \ref{fig:VaryNIterations} and \ref{fig:VaryNTime} we show the development of $m^\#$, the number of iterations and the runtime in seconds of all algorithms over all dimensions $N\in\left\{100,200,\ldots ,900\right\}$ for block-groups, generated as explained above, with fixed value $G=5$. 

The smallest number of measurements $m^\#$ which leads to a median relative error of at most $10^{-5}$ is nearly linear in the dimension; see Figure \ref{fig:VaryNRelError}. For all algorithms the corresponding $m^\#$ is very close even for different number of overlaps. Nevertheless the number of measurements $m^\#$ is smaller for expander matrices than for Gaussian matrices. Furthermore in the expander case the instances with overlap $\lfloor\frac{l-1}{2}\rfloor$ have a smaller $m^\#$.
The average number of iterations performed by the algorithms fluctuates a lot for the Gaussian case. Here the value increases slowly for the Model-IHT while it increases more rapidly for the AM-IHT. In the expander case the number of iterations is very close to each other for all algorithms and lies between $50$ and $70$ most of the time; see Figure \ref{fig:VaryNIterations}. The drop from $N=100$ to $N=200$ is due to the small value of $d$ when $N=100$. Furthermore the number of iterations is much lower in the expander case.

\begin{figure}[h!]
\centering
\includegraphics[scale=0.5]{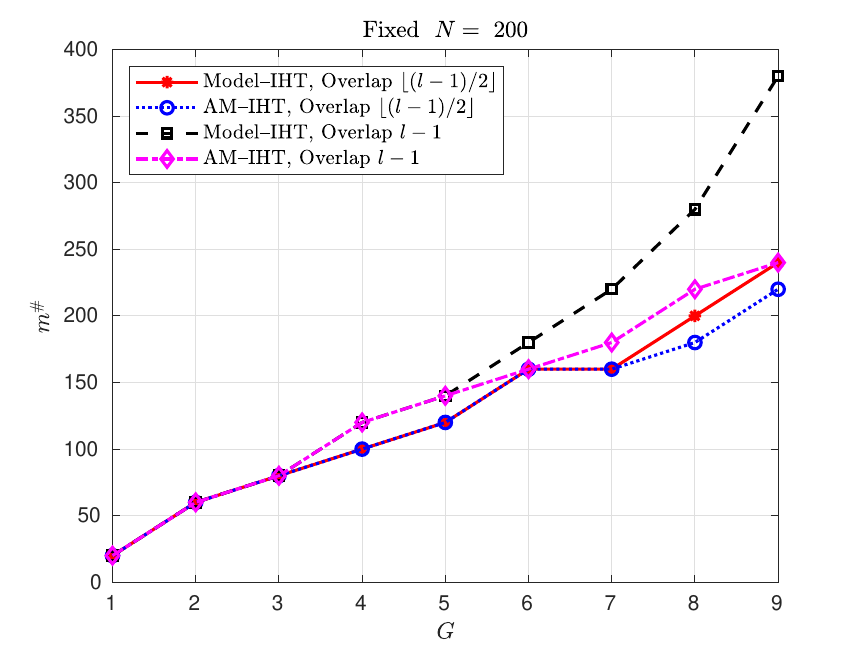}
\includegraphics[scale=0.5]{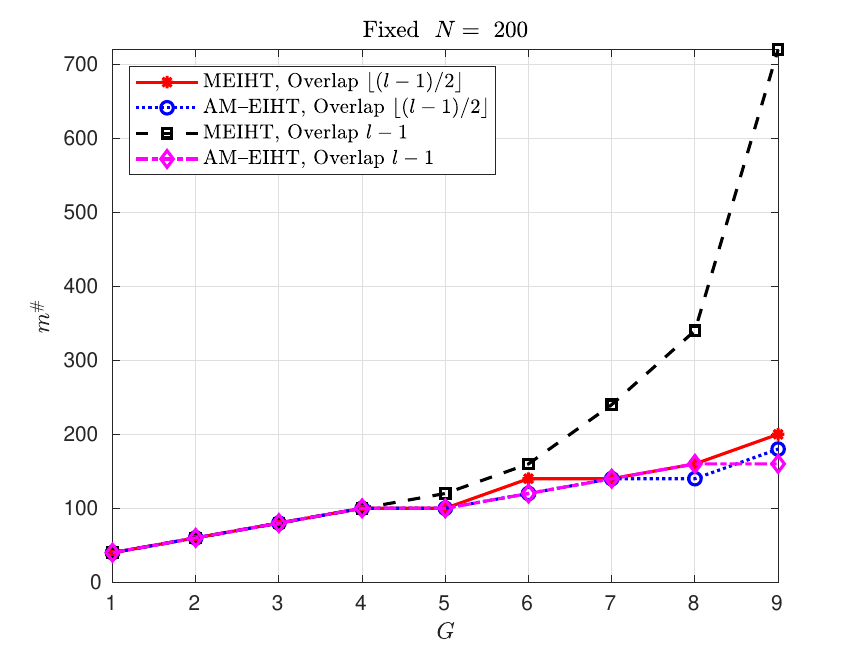}
\caption{Smallest number of measurements $m^\#$ for which the median of the relative error over all random matrices is at most $10^{-5}$.}
\label{fig:VaryGRelError}
\end{figure}
\begin{figure}[h!]
\centering
\includegraphics[scale=0.5]{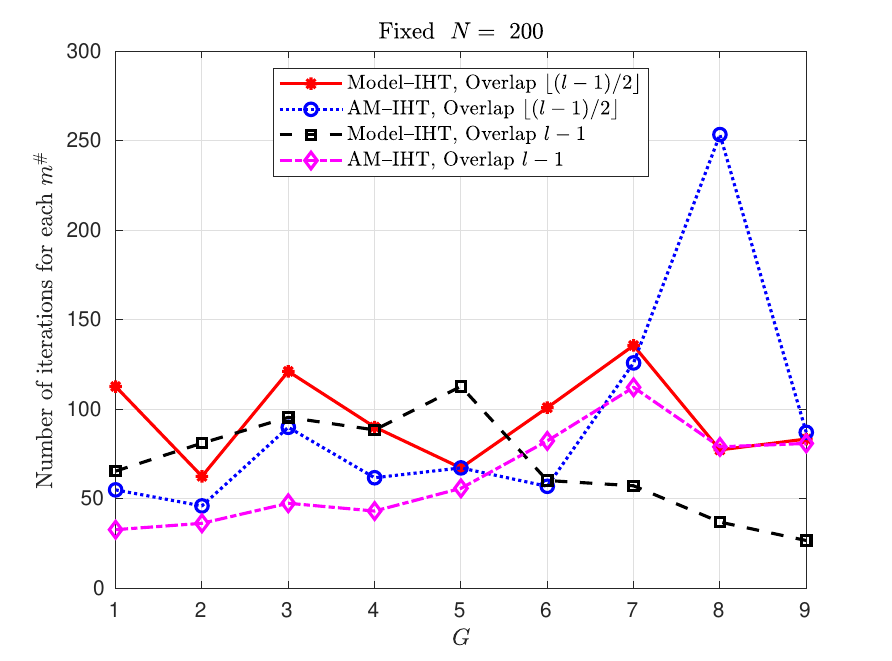}
\includegraphics[scale=0.5]{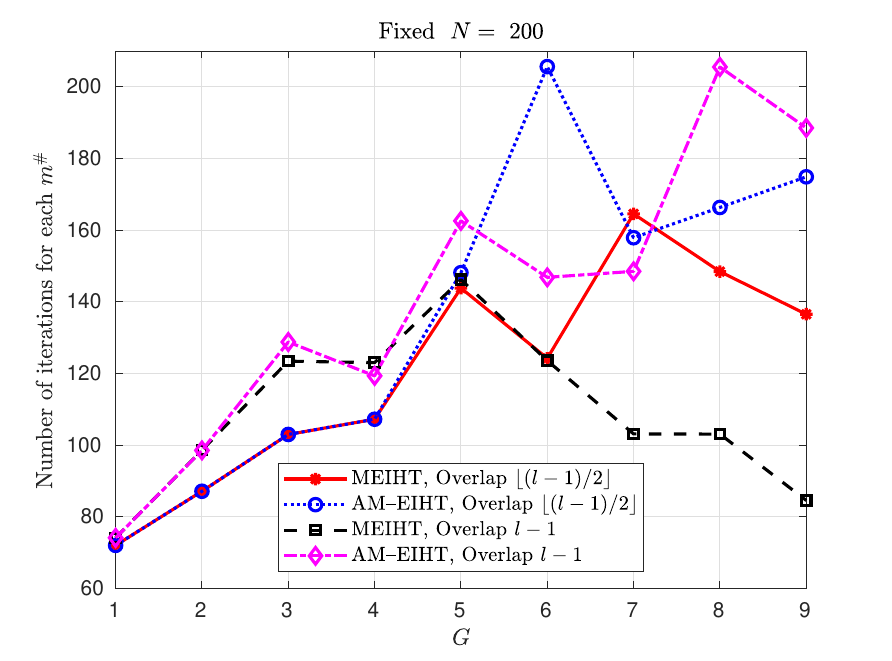}
\caption{Average number of iterations performed by the algorithm to successfully recover the signal with $m^\#$ measurements.}
\label{fig:VaryGIterations}
\end{figure}
\begin{figure}[h!]
\centering
\includegraphics[scale=0.5]{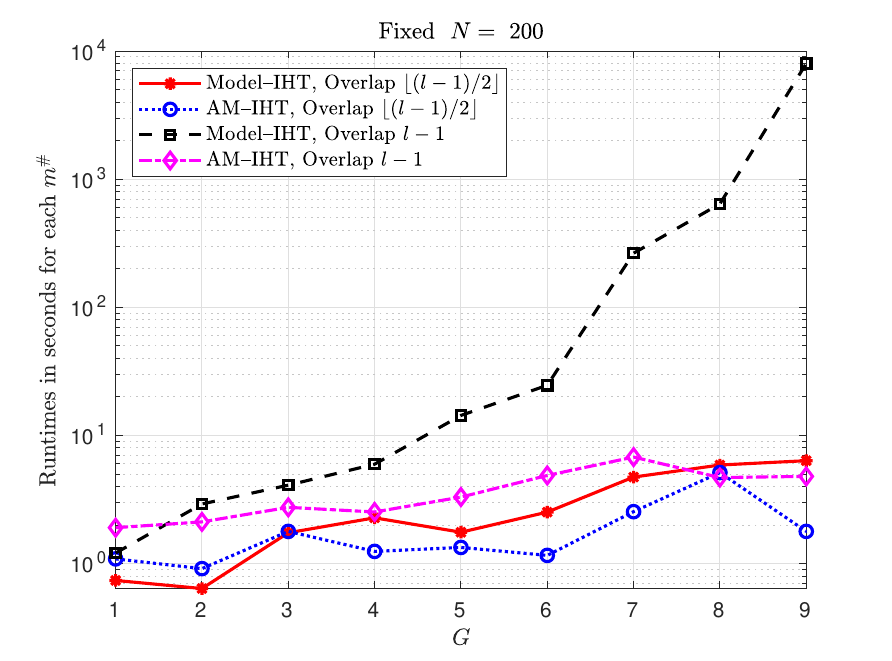}
\includegraphics[scale=0.5]{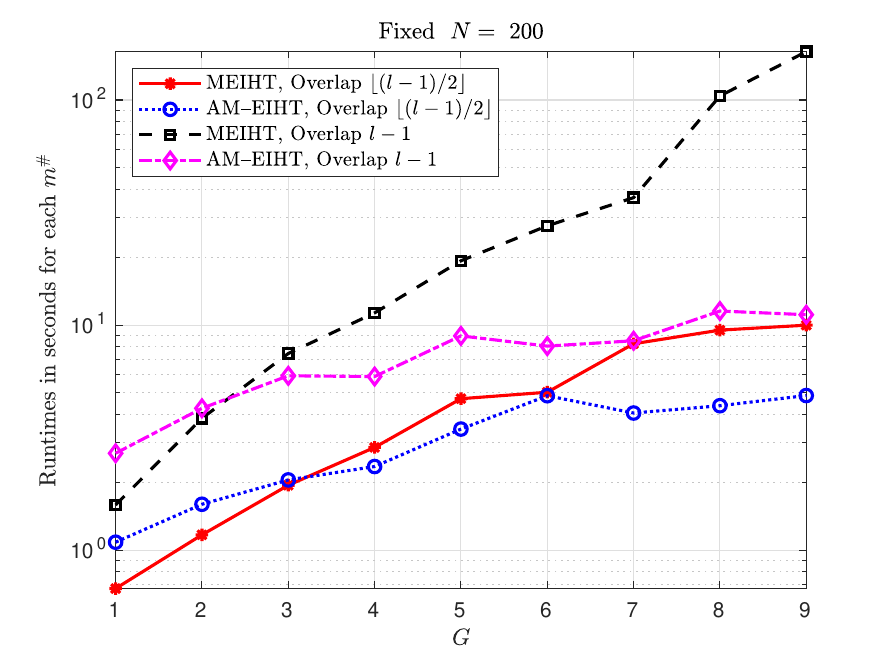}
\caption{Average time in seconds needed by the algorithm to successfully recover the signal with $m^\#$ measurements.}
\label{fig:VaryGTime}
\end{figure}
The average runtime for the Gaussian case is a bit larger than the runtime for the expander case as expected, since operations with dense matrices are more costly than the sparse ones. However, it may be due to the larger number of iterations; see Figure \ref{fig:VaryNTime}. Furthermore the runtime for the instances with $l-1$ overlap is much larger in both cases. Here the AM-IHT (AM-EIHT) is faster than the Model-IHT (MEIHT) for the instances with overlap $l-1$ while it is slightly slower for the others.

In Figures \ref{fig:VaryGRelError}, \ref{fig:VaryGIterations} and \ref{fig:VaryGTime} we show the same properties as above, but for varying $G\in\left\{ 1,2,\ldots ,9\right\}$, a fixed dimension of $N=200$ and $d$ fixed to $7$ for all values of $G$. Similar to Figure \ref{fig:VaryNRelError} the value $m^\#$ seems to be linear in $G$ (see Figure \ref{fig:VaryGRelError}). Just the Model-IHT (MEIHT) for blocks with overlap $l-1$ seems to require an exponential number of measurements in $G$ to guarantee a small median relative error. Here the AM-IHT (AM-EIHT) performs much better. Interestingly the number of iterations does not change a lot for increasing $G$ for Gaussian matrices while it grows for the AM-EIHT in the expander case. This is in contrast to the iteration results with increasing $N$; see Figure \ref{fig:VaryNIterations}. The runtime of all algorithms increases slowly with increasing $G$, except for the IHT for blocks with overlap $l-1$ the runtime explodes. For both instances the AM-IHT (AM-EIHT) is faster than the Model-IHT (MEIHT).

To conclude this section we selected the instances for dimension $N=800$ and $G=5$ and present the development of the median relative error over the number of measurements $m$; see Figure \ref{fig:VarymRelError}.
In the expander case the median relative error decreases rapidly and is nearly $0$ already for $\frac{m}{N}\approx 0.45$. Just for the MEIHT for blocks with overlap $l-1$ the relative error is close to $0$ not until $\frac{m}{N}\approx 0.55$. For the Gaussian case the results look similar with the only difference that a median relative error close to $0$ is reached not until $\frac{m}{N}\approx 0.6$.
\begin{figure}[h!]
\centering
\includegraphics[scale=0.5]{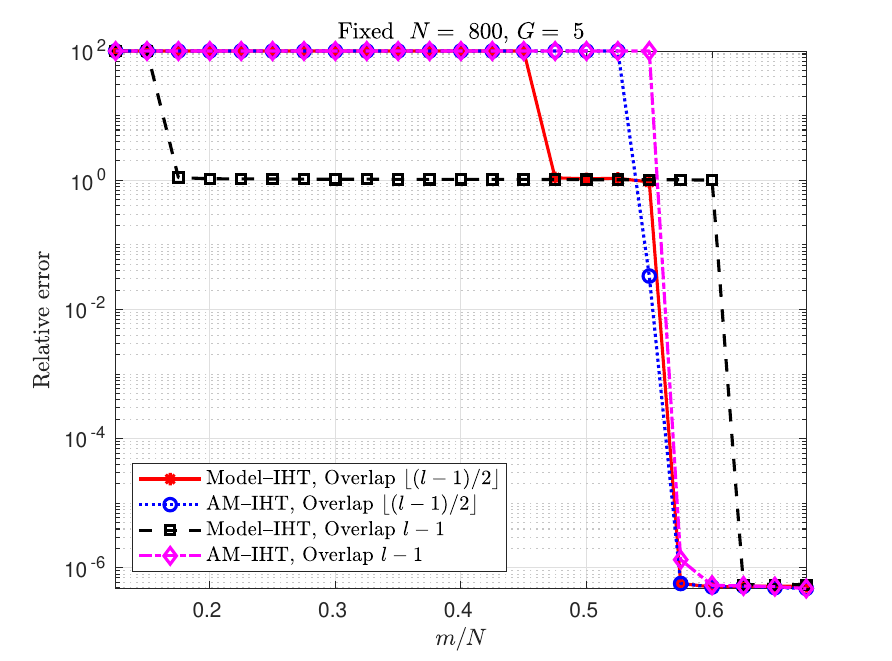}
\includegraphics[scale=0.5]{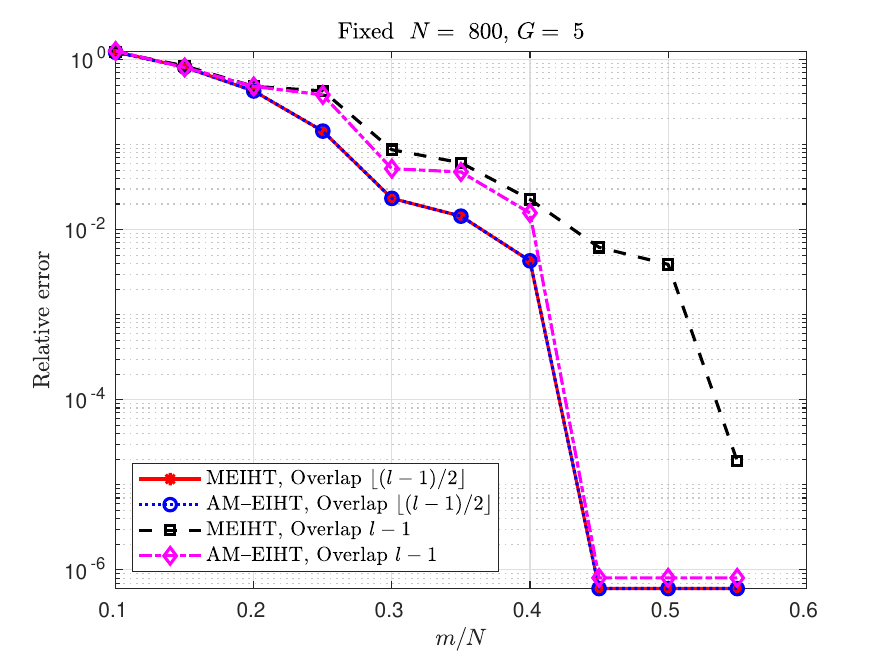}
\caption{Median relative error for instances with dimension $N=800$ and $G=5$. The diagrams are presented in log-scale.}
\label{fig:VarymRelError}
\end{figure}

\subsection{Latent Group Lasso}
In this section we present computational results for the latent group Lasso approach introduced in Section \ref{sec:prelim}. We consider block-groups and all group instances are generated as described in the previous section. We study the $\ell_1/\ell_2$ variant presented in \eqref{eqn:grouplassooverlap} and its $\ell_0$ counterpart presented in \eqref{eqn:grouplassoL0}. Both problems are implemented in CPLEX 12.6. For the $\ell_0$ counterpart we implemented the integer programming formulation \eqref{eqn:grouplassoL0IPFormulation}. For the given random Gaussian matrices $\A\in\RR^{m\times N}$ and its linear measurements $\obs\in\RR^m$ we use $L(\x)=\|\A\x-\obs\|_2^2$ while for expander matrices we use $L(\x)=\|\A\x-\obs\|_1$. The last choice is motivated by our comnputational tests which showed that the $\ell_1$-norm has a better performance for expander matrices. 

We run all algorithms for random signals $\x\in\RR^N$ in dimension $N=200$. The number of measurements is varied in $m\in\left\{ 20,40,.., 2N\right\}$ and we generate $20$ random matrices $\A\in\RR^{m\times N}$ each together with a random signal $\x\in\RR^N$. We assume there is no noise, i.e. $\noise=0$. For a given group model $\mathfrak G$ the support of the signal $\x$ is determined as the union of $G=5$ randomly selected groups. The components of $\x$ in the support are calculated as identical and independent draws from a standard Gaussian distribution while all other components are set to $0$. Our computations are processed for two classes of random matrices, Gaussian matrices and expander matrices generated as described in the previous section. After each calculation we calculate the relative error of the returned signal $\hat \x$ to the true signal $\x$, i.e. we calculate
\[
\frac{\| \x-\hat \x\|_p}{\| \x\|_p}
\]
where we use $p=1$ for the calculations corresponding to the expander matrices and $p=2$ for the Gaussian matrices. Additionally we calculate the pattern recovery error
\[
\frac{1}{2N}\left( |\supp (\x)\setminus \supp (\hat \x)| + |\supp (\hat \x)\setminus \supp (\x)| \right) ,
\]
which was defined in \cite{obozinski2011group}, and the probability of recovery, i.e. the fraction of instances which were successfully recovered. We call a signal \textit{recovered} if the relative error is smaller or equal than $10^{-4}$.

All computations were performed for $\lambda\in\left\{0.25,0.5,\ldots ,5 \right\}$ and $d_{\mathcal G} = 1$ for all groups $\mathcal G\in\mathfrak G$. For each $m\in\left\{ 20,40,.., 2N\right\}$ the $\lambda$ with the best average relative error is calculated and the $\lambda$ which has the best average relative error most often over all $m$ is chosen. For all experiments the optimal value was $\lambda^*=0.25$.

All computations were calculated on a cluster of 64-bit Intel(R) Xeon(R) CPU E5-2603 processors running at 1.60 GHz with 15MB cache.

The results of the computations are presented in Figures \ref{fig:LassoMRE}, \ref{fig:LassoProb}, \ref{fig:LassoPRE}, \ref{fig:LassoG} and \ref{fig:LassoLambda}. For each $m$ we show the median relative error, the probability of recovery, the average pattern recovery error and the average number of selected groups $G$; each value calculated over all $20$ matrices and for $\lambda^*=0.25$. In Figure \ref{fig:LassoLambda} we show for each $m$ the value of $\lambda$ with has the smallest average relative error.

\begin{figure}[h!]
\centering
\includegraphics[scale=0.5]{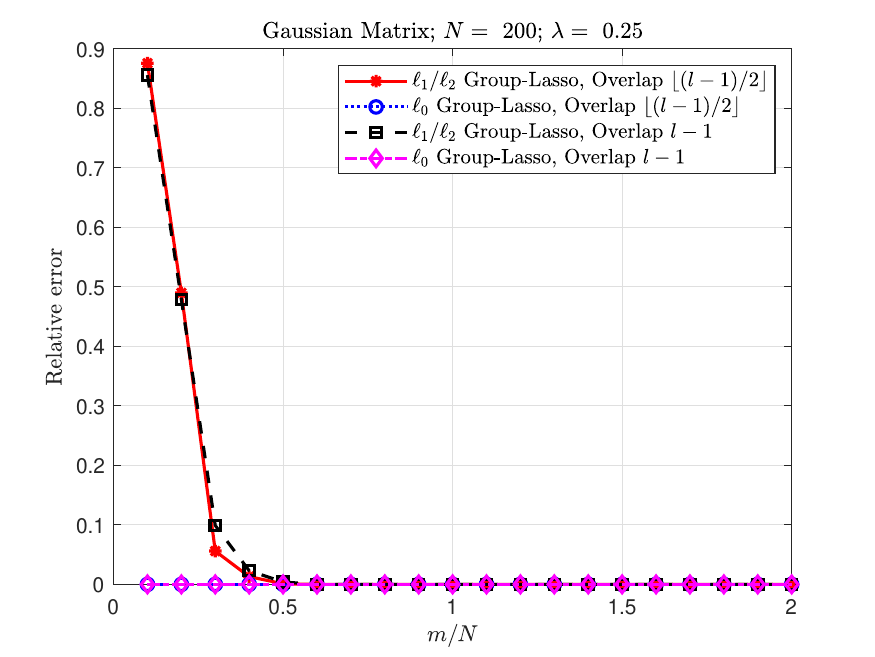}
\includegraphics[scale=0.5]{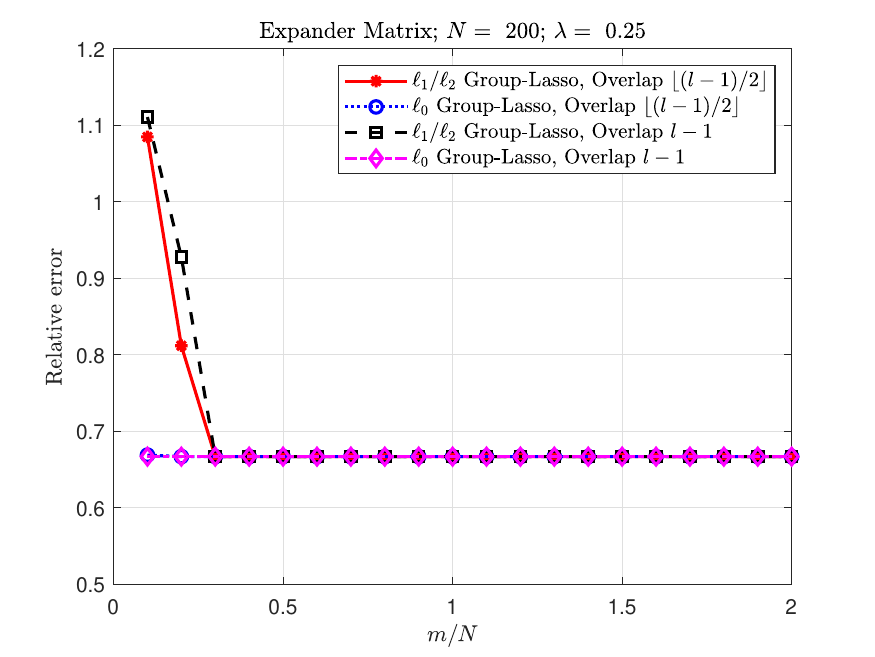}
\caption{Median relative error over all $20$ instances.}
\label{fig:LassoMRE}
\end{figure}

\begin{figure}[h!]
\centering
\includegraphics[scale=0.5]{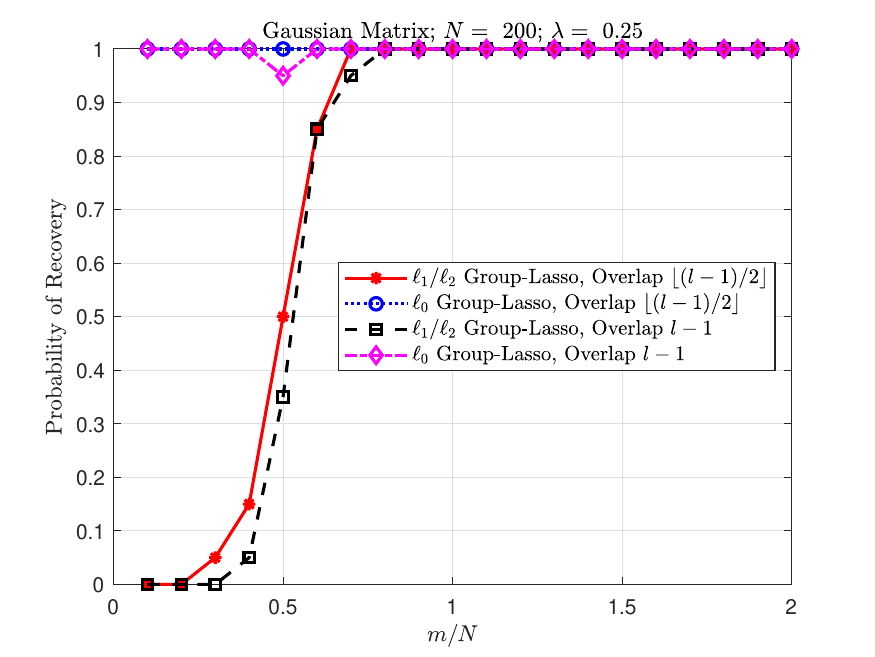}
\includegraphics[scale=0.5]{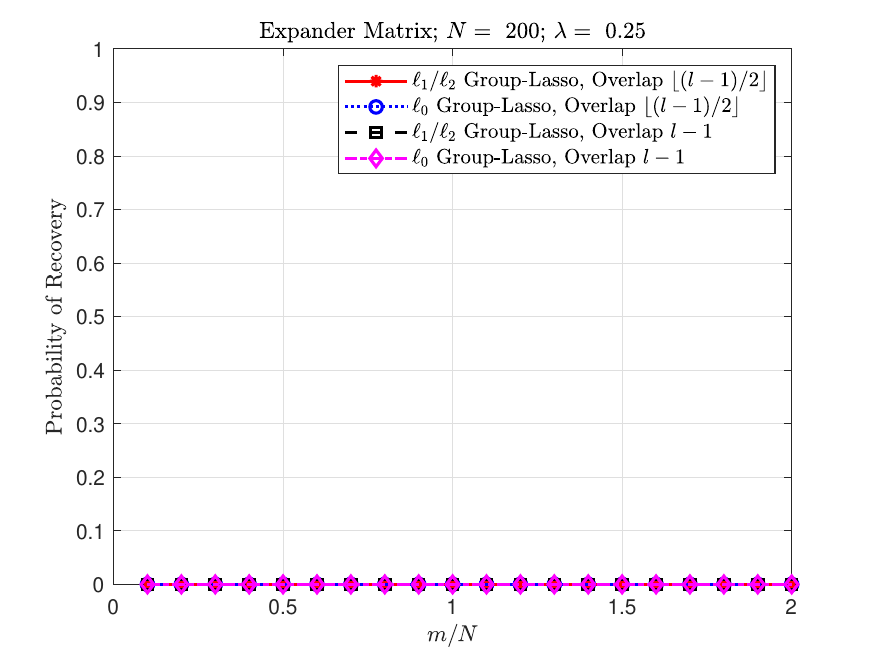}
\caption{Probability of recovery.}
\label{fig:LassoProb}
\end{figure}

\begin{figure}[h!]
\centering
\includegraphics[scale=0.5]{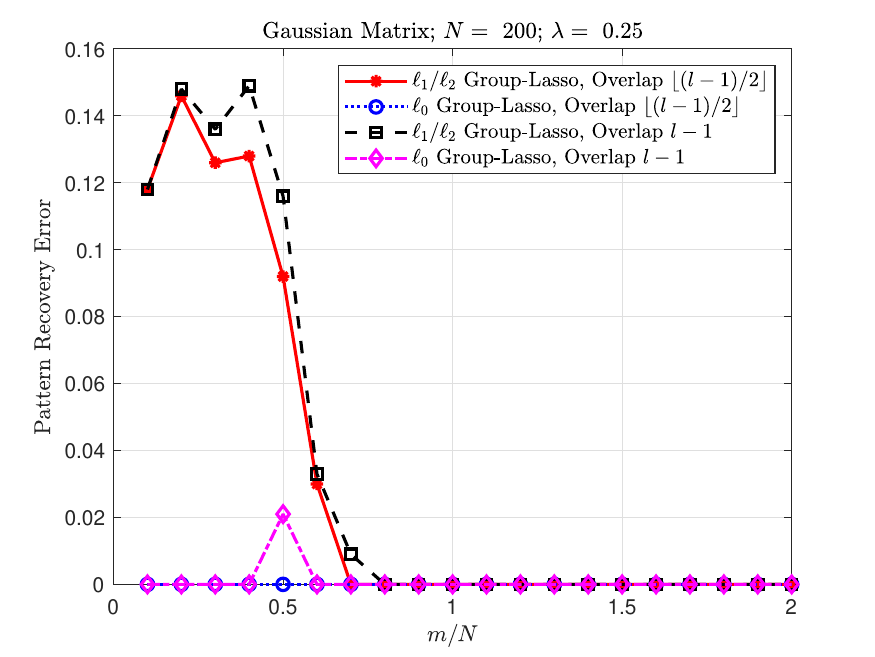}
\includegraphics[scale=0.5]{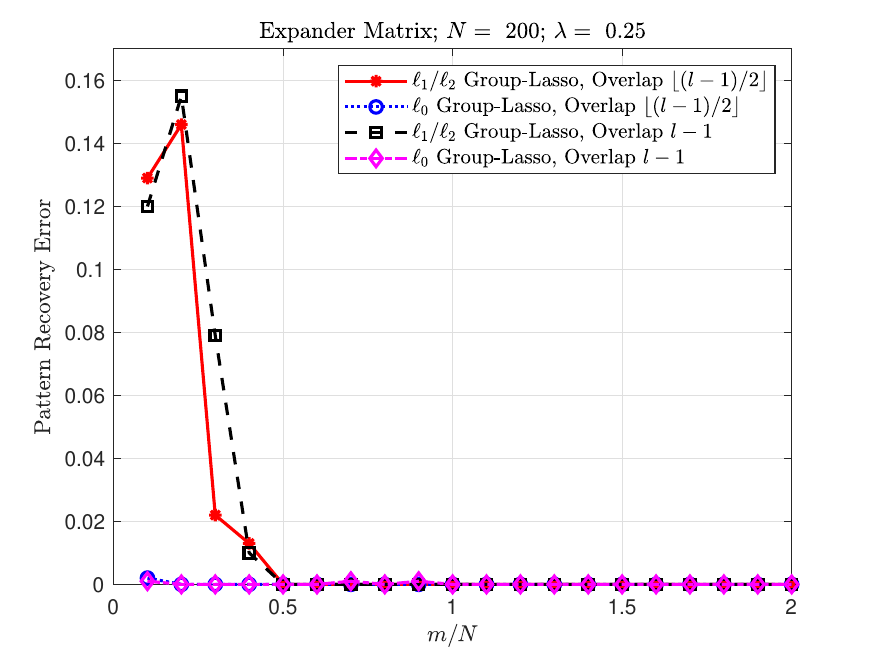}
\caption{Average pattern recovery error over all $20$ instances.}
\label{fig:LassoPRE}
\end{figure}

\begin{figure}[h!]
\centering
\includegraphics[scale=0.5]{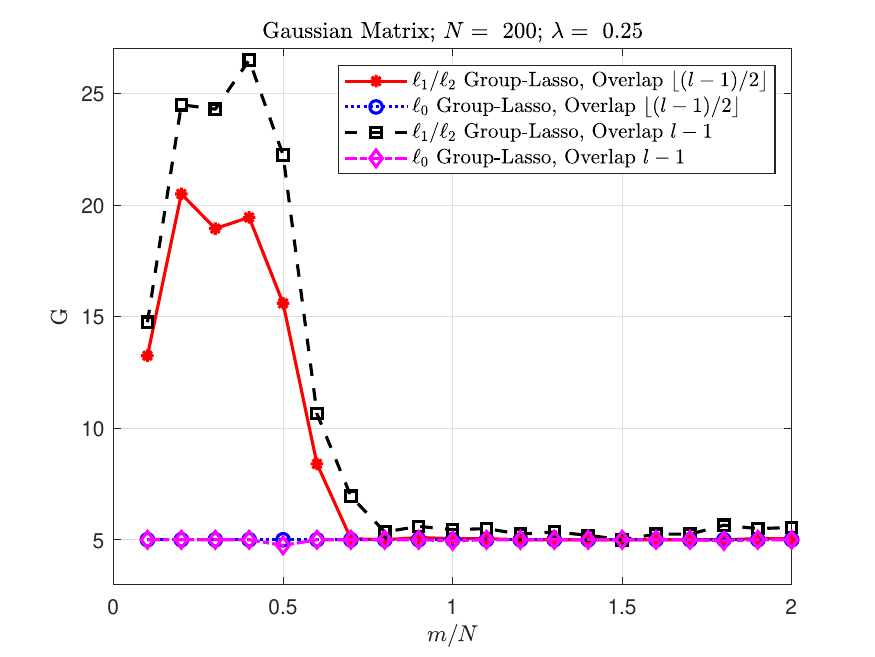}
\includegraphics[scale=0.5]{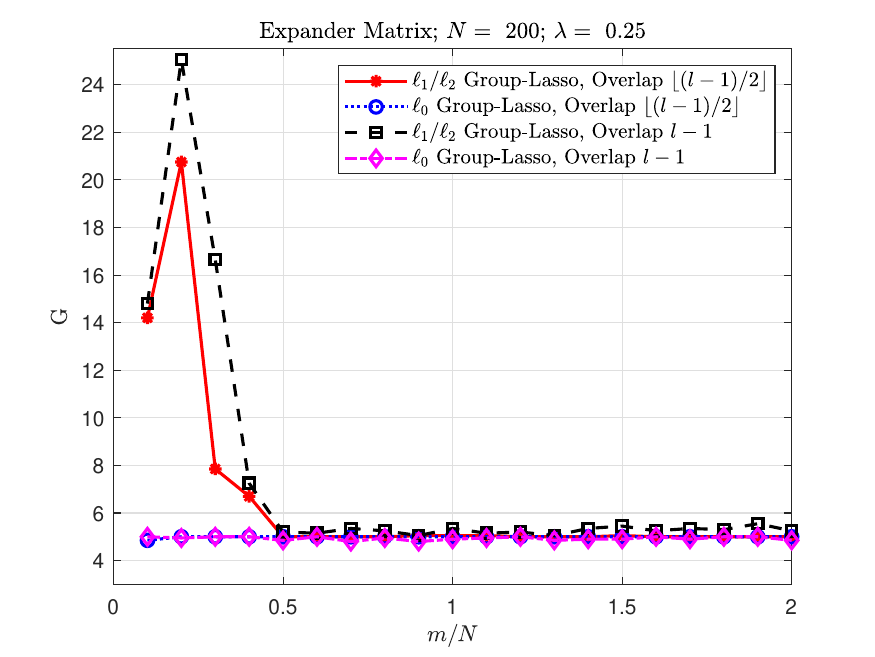}
\caption{Average number of groups $G$ selected in $\hat x$ over all $20$ instances.}
\label{fig:LassoG}
\end{figure}

\begin{figure}[h!]
\centering
\includegraphics[scale=0.5]{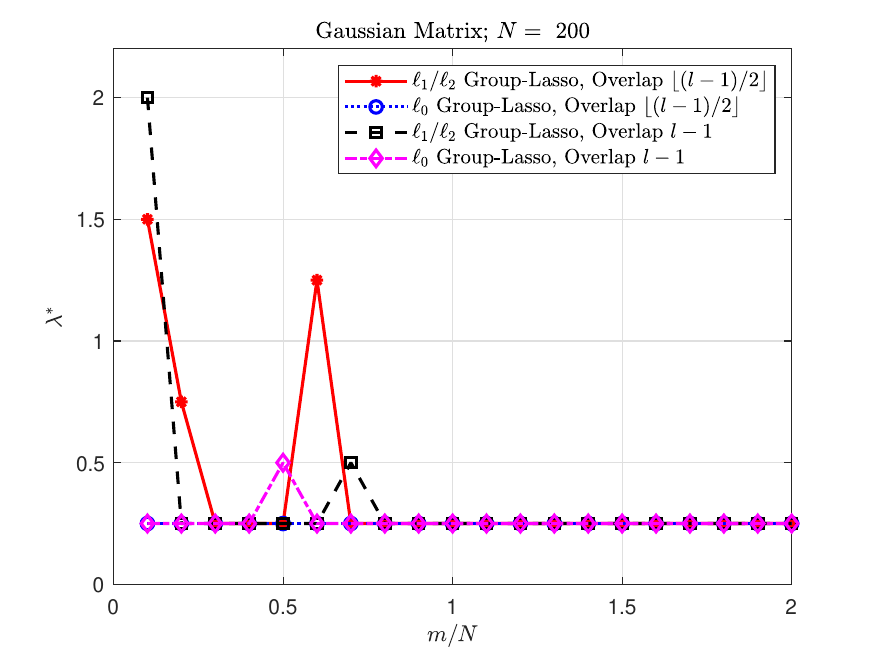}
\includegraphics[scale=0.5]{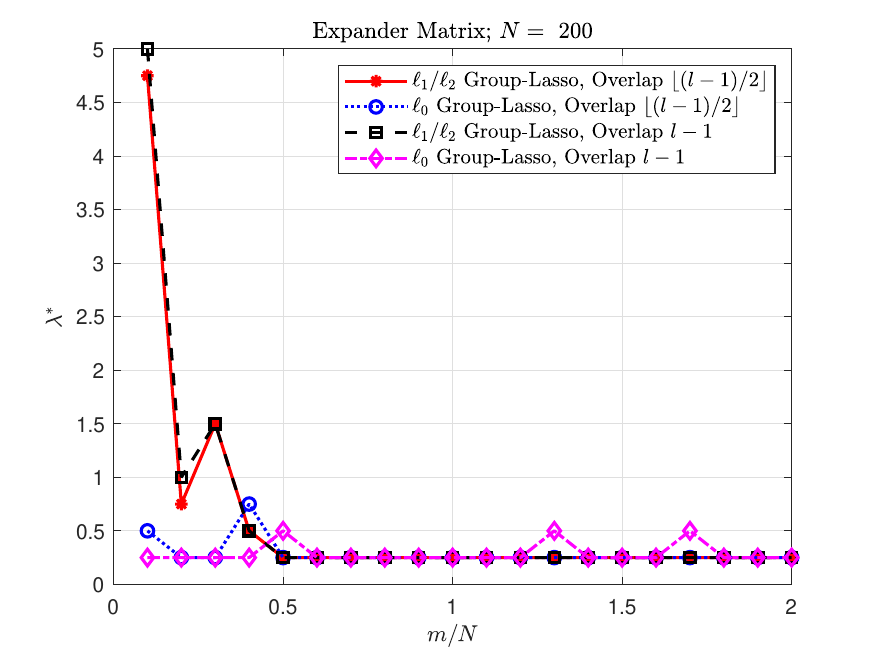}
\caption{Value of $\lambda$ with best mean relative error.}
\label{fig:LassoLambda}
\end{figure}

The results in Figure \ref{fig:LassoMRE} show that the $\ell_0$ variant of the latent group Lasso performs very well for Gaussian matrices. Even for a small number of measurements the median relative error is $0$ while for expander matrices the error never is smaller than $0.6$. The $\ell_1 / \ell_2$ variant for Gaussian matrices has a larger error for small number of measurements which decreases rapidly and is always $0$ for $m$ larger than $0.5N$. In the expander case it is never smaller than $0.6$ as well. The same picture holds for the pattern recovery error; see Figure \ref{fig:LassoPRE}. The only difference here is that also for expander matrices the error tends to $0$. The results indicate that the optimal support is calculated for both variants and for both types of matrices if $m$ is large enough, but for expander matrices the latent group Lasso struggles to find the optimal component-values of $\x$ in the support. Interestingly the frequency of the groups does not significantly influence the results.

The probability of recovery for Gaussian matrices is $1$ for all $m$ for the $\ell_0$ variant and is $1$ for $m$ larger than $0.8N$ for the $\ell_1 / \ell_2$ variant; see Figure \ref{fig:LassoProb}. According to the results for the relative error the probability of recovery for expander matrices is $0$ for all $m$. The number of groups selected by the latent group Lasso is close to $5$ for all $m$ for the $\ell_0$ variant; see Figure \ref{fig:LassoG}. For the $\ell_1 / \ell_2$ variant with Gaussian matrices it is close to $5$ for all $m$ larger than $0.8N$ while for the expander matrices it is already close to $5$ for all $m$ larger than $0.5N$. The value of the optimal $\lambda$ is large for small $m$ and is always $0.25$ for larger $m$; see Figure \ref{fig:LassoLambda}.

To summarize the results, it seems that the $\ell_0$ latent group Lasso outperforms the $\ell_1 / \ell_2$ variant. It can even compete with the iterative algorithms tested in the previous section; the number of required measurements can be even smaller for the $\ell_0$ latent group Lasso while at the same time the support is correctly recovered. Nevertheless in a real-word application the optimal $\lambda$ is not known and has to be found. Furthermore in contrast to the iterative algorithms studied in this work it can never be guaranteed that the recovered support and especially the number of groups calculated by the latent group Lasso is optimal. The expander variant of the latent group Lasso performs much worse than for the iterative algorithms. Especially this approach fails to recover the true signal for all instances, while the true support can be found.

\section{Conclusion}

In this paper we revisited the model-based compressed sensing problem focusing on overlapping group models with bounded treewidth and low frequency. We derived a polynomial time dynamic programming algorithm to solve the exact projection problem for group models with bounded treewidth, which is more general than the state-of-the-art considering loopless overlapping models. For general group models we derived an algorithm based on the idea of Bender's decomposition, which may run in exponential time but often performs better than dynamic programming in practice. We proved that the latter procedure is generalizable from group-sparse models to group-sparse plus standard sparse models. The most dominant operation of iterative exact projection algorithms is the model projection. Hence our results show that the Model-IHT and the MEIHT run in polynomial time for group models with bounded treewidth. 
Alternatively, for group models with bounded frequency we show that another class of less accurate algorithms run in polynomial time. More precisely the AM-IHT and the AM-EIHT are algorithms using head- and tail-approximations instead of exact projections. 

Using Benders' Decomposition (with Gaussian and model-expander sensing matrices) we compare the minimum number of measurements required by, and runtimes of, each of the four algorithms (Model-IHT, MEIHT, AM-IHT and AM-EIHT) to achieve a given accuracy. In summary the experimental results on overlapping block groups seem to indicate that the number of required measurements to recover a signal is smaller for expander matrices than for Gaussian matrices. Furthermore, we could observe that the number of measurements to ensure a small relative error is smaller for the approximate versions of the algorithms. The run-time gets much larger for Gaussian matrices with increasing $N$ than for expander matrices, which might be just what is expected when applying dense versus sparse matrices. In general the approximate versions of the algorithms may have a larger number of iterations but the run-time is lower. This indicates that the larger number of iterations can be compensated by the faster computation of the approximate projection problems in each iteration. Additionally to the iterative algorithms we test the latent group Lasso approach on the same instances and show that the $\ell_0$ variant outperforms the $\ell_1 / \ell_2$ variant and is even competitive to the iterative algorithms.

\paragraph*{Acknowledgements}
BB acknowledges the support from the funding by the German Federal Ministry of Education and Research, administered by Alexander von Humboldt Foundation, for the German Research Chair at AIMS South Africa.

\bibliographystyle{abbrv}      
\bibliography{./sparse}


\end{document}